%% file: GVforOrbiCY3s.tex
\documentclass[10pt]{amsart}

\usepackage{amscd,amsmath,amssymb,euscript,tabu,multirow,float,tikz-cd,tensind,bm,mathtools,amsthm}
\tensordelimiter{?}
\usepackage[frame,cmtip,curve,arrow,matrix,line,graph]{xy}
\usepackage{xcolor,verbatim}
\usepackage[colorlinks=true, linkcolor=., citecolor=blue]{hyperref}
\usepackage{arydshln}
\usepackage{nicematrix}
\usepackage{caption}
\usepackage{enumerate}
\usepackage{stmaryrd}
\usepackage{tikz}
\usepackage{mathrsfs}
\usepackage{thmtools}
\usepackage{thm-restate}

\title[GV invariants for local orbifold CY3s of $A_{N}$-type]{Gopakumar-Vafa Invariants for Local Calabi-Yau Orbifolds of $\bm{A_{N}}$-type}
\author{Jim Bryan and Stephen Pietromonaco}

\date{}

\theoremstyle{plain}
\newtheorem{theorem}{Theorem}[section]

\newtheorem{conjecture}[theorem]{Conjecture}

\newtheorem{corollary}[theorem]{Corollary}
\newtheorem{proposition}[theorem]{Proposition}
\newtheorem{lemma}[theorem]{Lemma}

\theoremstyle{definition}
\newtheorem{definition}[theorem]{Definition}
\newtheorem{remark}[theorem]{Remark}
\newtheorem{example}[theorem]{Example}

\theoremstyle{plain}
\newtheorem{introtheorem}{Theorem}

\theoremstyle{definition}
\newtheorem{introdefinition}{Definition}

\newcommand{\Pic}{\operatorname{Pic}}

\newcommand{\num}{\operatorname{num}}

\newcommand{\dg}{\operatorname{deg}}

\newcommand{\Ext}{\operatorname{Ext}}
\newcommand{\Hom}{\operatorname{Hom}}

\newcommand{\Coef}{\operatorname{Coef}}
\newcommand{\SL}{\operatorname{SL}}
\newcommand{\Tot}{\operatorname{Tot}}

\newcommand{\Rep}{\operatorname{Rep}}
\newcommand{\Char}{\operatorname{char}}
\newcommand{\coker}{\operatorname{coker}}

\newcommand{\DD}{\operatorname{\mathbb{D}}}

\renewcommand{\natural}{\perp}

\renewcommand{\leq}{\leqslant}
\renewcommand{\geq}{\geqslant}

\newcommand\smvee{\raise0.3ex\hbox{$\scriptscriptstyle\vee$}}

\newcommand{\isom}{\cong}

\newcommand{\CC}{\mathbb C}
\newcommand{\Hilb}{\operatorname{Hilb}}

\newcommand{\QQ}{\mathbb Q}

\newcommand{\ZZ}{\mathbb Z}

\newcommand{\BB}{\mathcal B}
\newcommand{\OO}{\mathcal O}
\newcommand{\into}{\hookrightarrow}

\newcommand{\PP}{\mathbb{P}}

\newcommand{\F}{\mathsf{F}}
\newcommand{\XX}{\mathcal{X}}
\newcommand{\CCC}{\mathcal{C}}
\newcommand{\Sorb}{\mathcal{S}}
\newcommand{\vir}{\mathsf{vir}}

\newcommand{\mr}{\mathsf{mr}}

\newcommand{\nexc}{\textsf{n-exc}}

\newcommand{\ZZZ}{\mathsf{Z}}

\newcommand{\GW}{\mathsf{GW}}
\newcommand{\PT}{\mathsf{PT}}
\newcommand{\DT}{\mathsf{DT}}

\newcommand{\FM}{\mathsf{FM}}
\newcommand{\BS}{\mathsf{BS}}
\newcommand{\BM}{\mathsf{BM}}

\renewcommand{\bm}[1]{\mathbf{#1}}

\newcommand{\orb}{\mathsf{orb}}
\newcommand{\pt}{\mathsf{pt}}
\newcommand{\exc}{\mathsf{exc}}

\newcommand{\nrefined}{n^{\XX}_{g}(\beta; \bm{m})}
\newcommand{\Snrefined}{n^{\Sorb}_{g}(\beta; \bm{m})}

\newcommand{\LambdaWeightAN}{\Lambda_{A_{N}}^{\! \smvee}}

\begin{document}

\begin{abstract}
Let $\mathcal{X}$ be a local orbifold Calabi-Yau threefold whose
coarse space $X$ has transverse $A_{N}$ singularities along a smooth
non-compact curve. We define orbifold Gopakumar-Vafa invariants, and
we prove they are integers and satisfy a finiteness property. We
compute our invariants for local orbifold $K3$ surfaces, where we
prove an orbifold version of the classical Yau-Zaslow formula: we
show that for an orbifold $K3$ surface $\mathcal{S}$, the genus zero
Gopakumar-Vafa invariant of $\mathcal{S}$ in a class of square $2n$ is
the Euler characteristic of the Hilbert scheme of $n+1$ points on the
\emph{singular} surface $S$.

\end{abstract}

\maketitle


\section{Introduction}

\subsection{Ordinary Gopakumar-Vafa invariants}\label{subsec: intro ordinary GV section}

Let $Y$ be a Calabi-Yau threefold (CY3) by which we mean a smooth
quasi-projective variety over $\CC$ of dimension three with
$K_{Y}\cong \OO_{Y}$. One avenue to studying curve-like objects in $Y$
is through the stable pairs of Pandharipande-Thomas
\cite{Pandharipande-Thomas}. A stable pair in the bounded derived
category $D^{b}(Y)$ is a two-term complex, concentrated in degrees
$-1$ and $0$
\[
I^{\bullet} = [\OO_{Y} \xrightarrow{s} F] \in D^{b}(Y)
\] 
such that $F$ is a pure coherent sheaf with proper one-dimensional
support, and $\coker(s)$ is zero-dimensional. The discrete invariants
of a stable pair naturally live in the group $N_{\leq 1}(Y)$, which
denotes $K$-theory with proper support of dimension less than or equal
to one, up to numerical equivalence. There is a natural splitting
$N_{\leq 1}(Y)\cong N_{1}(Y)\oplus N_{0}(Y)$ where $N_{1}(Y) \subset
N_{\leq 1}(Y)$ is the subgroup of sheaves with Euler characteristic
zero, and where $N_{0}(Y)$ is the subgroup of zero-dimensional sheaves
which is isomorphic to $\ZZ$ via the Euler characteristic. A stable
pair then has class
\[
[F] = (\beta, n) \in N_{1}(Y) \oplus N_{0}(Y)
\]
where $\chi(F) = n$. There is a fine moduli scheme $P_{n}(Y, \beta)$
parameterizing stable pairs of class $(\beta, n)$, and we define the
Pandharipande-Thomas (PT) invariants $\PT_{\beta, n}(Y)$ as the
Behrend-weighted Euler characteristic of the moduli space
\cite{KaiBehrend}. These invariants are packaged into the PT partition
function
\[
\ZZZ_{Y}^{\PT}(q,y) =  \sum_{\beta \in N_{1}(Y)} \sum_{n \in \ZZ} \PT_{\beta, n}(Y) q^{\beta} y^{n}
\]
which is a priori a power series in multivariable $q$ whose coefficients are Laurent series in $y$. There is, however, a general structural symmetry induced by derived duality. Throughout this paper, we let $\QQ(y)^{y \leftrightarrow y^{-1}}$ denote the field of rational functions $h(y) \in \QQ(y)$ satisfying $h(y) = h(y^{-1})$. 
\begin{theorem}[{Bridgeland \cite{Bridgeland-PTDT}, Toda \cite{Toda-PTDT}}] \label{thm: Bridgeland and Toda Theorems}
For all $\beta \in N_{1}(Y)$, the formal series $\Coef_{q^{\beta}}\ZZZ_{Y}^{\PT}$ is the Laurent expansion in $y$ of an element $f_{\beta}(y) \in \QQ(y)^{y \leftrightarrow y^{-1}}$. 
\end{theorem}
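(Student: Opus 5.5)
The strategy follows Bridgeland and Toda: control the PT generating function through wall-crossing in the Hall algebra of the heart generated by $\OO_Y$ and sheaves of dimension $\leq 1$, and use derived duality to obtain the $y \leftrightarrow y^{-1}$ symmetry. The first step is the DT/PT correspondence
\[
\ZZZ^{\DT}_{Y}(q,y) = M(-y)^{\chi(Y)}\,\ZZZ^{\PT}_{Y}(q,y),
\]
proved in the motivic Hall algebra of $\mathcal{A} = \langle \OO_Y[-1], \mathrm{Coh}_{\leq 1}(Y)\rangle$. It comes from the torsion pairs $(\mathrm{Coh}_0, \mathrm{Coh}_{1})$ and their tilts. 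Ideal sheaves factor as a product of the stack of zero-dimensional sheaves with stable pairs. The integration map, which is a Poisson algebra homomorphism to the quantum torus weighted by the Behrend function, then turns this identity into an identity of generating series. This route uses Joyce–Song/Behrend–Fantechi identities for the Behrend function. Toda's version instead uses his own Euler-characteristic wall-crossing.

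The second and main step is to refine this into a product formula. Let $N_{n,\beta}$ be the generalized DT invariants counting one-dimensional semistable sheaves with respect to a slope $\mu = n/(\omega\cdot\beta)$. Wall-crossing across the walls where $\OO_Y$ destabilizes via one-dimensional sheaves expresses $\ZZZ^{\PT}$ through a limit in a family of weak stability conditions. One ends up with
\[
\ZZZ^{\PT}_{Y} = \prod_{\beta>0,\,n>0} \exp\big((-1)^{n-1} n N_{n,\beta}\, q^\beta y^n\big)^{\epsilon}\cdot \prod_{\beta>0,\,n\geq 0}\big(\ldots\big)\cdot \ZZZ_{L}(q,y),
\]
where the middle product involves the terms with $\mu \geq 0$. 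The final factor $\ZZZ_L$ is the generating series of $L$-invariants counting objects on the ``vertical'' wall $\mu=\infty$. This step needs the full Joyce–Song wall-crossing machinery together with a boundedness argument, namely that only finitely many walls affect each $q^\beta$. I expect it to be the main obstacle. I would first establish boundedness of semistable objects of fixed class, so that each coefficient of $q^\beta$ is a finite product.

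The final step deduces rationality and symmetry. Two facts feed in. First, $N_{n,\beta} = N_{n+\omega\cdot\beta,\beta}$, because tensoring by an ample line bundle is an autoequivalence preserving semistability. Second, $N_{n,\beta} = N_{-n,\beta}$, by derived duality $F \mapsto \mathcal{E}xt^2(F,\OO_Y)$ on one-dimensional sheaves. These give periodicity and symmetry in $n$, so each $\beta$-coefficient of the product over $n\in\mathbb{Z}$ of the factors $\exp(\ldots y^n)$ sums to a rational function such as $\sum_n n y^n$ over a period. These rational functions are invariant under $y\mapsto y^{-1}$. The $L$-invariant part $\ZZZ_L$ is a Laurent polynomial in $y$ for each $\beta$, since the relevant moduli are bounded. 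It is also symmetric, because the derived dual $\mathbb{R}\mathcal{H}om(-,\OO_Y)[1]$ preserves the class of these objects and acts as $n\mapsto -n$. Multiplying finitely many such factors in each $\beta$-degree gives $f_\beta(y) \in \QQ(y)^{y \leftrightarrow y^{-1}}$.
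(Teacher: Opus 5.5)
The paper gives no proof of this statement; it quotes it from Bridgeland and Toda. Your outline has the shape of Toda's argument, and Bridgeland's Hall-algebra proof, which uses slope torsion pairs on $\mathrm{Coh}_{\leq 1}(Y)$ and duality, has the same structure. The DT/PT step is not needed.

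However, the two points where the conclusion is actually decided are misstated, and as written your last step does not close. The wall-crossing output is exactly
\[
\ZZZ^{\PT}_{Y}=\prod_{\beta>0,\,n>0}\exp\big((-1)^{n-1}nN_{n,\beta}\,q^{\beta}y^{n}\big)\cdot \ZZZ_{L}(q,y),
\]
with no exponent $\epsilon$ and no middle product. The walls crossed have slopes in $(0,\infty)$, and the slope-zero wall changes nothing because $\chi(\OO_{Y},F)=n=0$ there. Your final paragraph never addresses the unspecified middle factor, so you must either show it is trivial or accept that the argument is incomplete.

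More seriously, $\ZZZ_L$ must count rank-one objects semistable at the \emph{self-dual} point, slope $\mu=0$, not at $\mu=\infty$. In Toda's language this is limit stability with $B=0$; in Bridgeland's, pairs for the torsion pair cut at slope $0$. Duality sends slope $\mu$ to $-\mu$, so only at $\mu=0$ does it preserve the class of objects and give $L_{n,\beta}=L_{-n,\beta}$. At $\mu=\infty$ the semistable objects are just the stable pairs, so $\ZZZ_L=\ZZZ^{\PT}_Y$. That series is not a Laurent polynomial, and its symmetry is exactly what you are trying to prove, so your two claims about $\ZZZ_L$ would be false or circular.

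Your boundedness claim is also wrong as stated. Each $q^{\beta}$-coefficient is in general affected by infinitely many walls, at the slopes $n/(\omega\cdot\beta')$ with $n>0$ and $\beta'\leq\beta$. This is exactly why $\Coef_{q^{\beta}}\ZZZ^{\PT}_{Y}$ is an infinite series in $y$, and why periodicity is needed to sum it. What you need is:
\begin{itemize}
\item[(a)] for fixed $\beta$, $L_{n,\beta}=0$ for $|n|\gg 0$, which is a uniform statement over all $n$, not merely boundedness of each moduli space of fixed class;
\item[(b)] local finiteness, which follows from (a): since every wall factor has $n>0$, only finitely many terms contribute to each coefficient of $q^{\beta}y^{n}$.
\end{itemize}

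Finally, the product runs over $n>0$, not $n\in\ZZ$. The $y\leftrightarrow y^{-1}$ invariance of $\sum_{n>0}(-1)^{n-1}nN_{n,\beta}y^{n}$ does not follow from periodicity alone. It also uses $N_{-n,\beta}=N_{n,\beta}$, through the reciprocity law $\sum_{n>0}c(n)y^{n}=-\sum_{n<0}c(n)y^{n}$ (as rational functions) for a quasi-polynomial $c$ with $c(0)=0$. You apply this to the odd quasi-polynomial $c(n)=(-1)^{n-1}nN_{n,\beta}$.
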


In 1998 Gopakumar and Vafa \cite{Gopakumar-Vafa} defined via physics
integer invariants $n_{g}^{Y}(\beta)$ which give a virtual count of
curves in $Y$ of genus $g$ representing a non-zero curve class $\beta \in
N_{1}(Y)$. These are called the Gopakumar-Vafa (GV) invariants. The
following formula uniquely determines the GV invariants in terms of
the PT invariants 
\begin{equation}\label{eqn: Ordinary GV/PT formula}
\log \ZZZ_{Y}^{\PT}(q,y) = \sum_{\beta \neq 0} \sum_{k >0} \sum_{g\in \ZZ } n^{Y}_{g}(\beta) \psi_{-(-y)^{k}}^{g-1} \frac{q^{k \beta}}{k}
\end{equation}
where $\psi_{x} =(x^{\frac{1}{2}} + x^{-\frac{1}{2}})^{2}$. 

In physics, the GV invariants count BPS states of certain configurations of D-branes in string theory. In
mathematics, they underlie the PT invariants and have better enumerative
properties. It is inherent from the physics formulation that the GV
invariants have the following properties: they are all integers, they
are zero for $g<0$, and for fixed $\beta \in N_{1}(Y)$ we have
$n_{g}^{Y}(\beta) = 0$ for all but finitely many genera $g$. 

Treating (\ref{eqn: Ordinary GV/PT formula}) as a mathematical
definition of the GV invariants, the expected property that the genus
is non-negative becomes a highly non-trivial statement about the PT
invariants. Through the MNOP correspondence \cite{MNOP1}, recently
proved by Pardon \cite{Pardon}, this statement is equivalent to a
corresponding statement in Gromov-Witten theory proven in the late
2010s by Ionel-Parker and Doan-Ionel-Walpuski
\cite{Ionel-Parker-GVconj,Doan-Ionel-Walpuski} in symplectic geometry.

\subsection{Gopakumar-Vafa Invariants for Orbifolds}\label{subsection: GV for Orbifolds}

The goal of this paper is to extend the Gopakumar-Vafa story to
certain orbifold CY3s. Although the enumerative geometry of orbifolds
has been defined and studied for many years
\cite{Abramovich-Graber-Vistoli-2008,OrbiGW-Abramovich-Vistoli,Beentjes-Calabrese-Rennemo,
BryanCadmanYoung, BryanGraber}, no analog of (\ref{eqn: Ordinary GV/PT
formula}) has yet been proposed in general (see
\cite{BryanPietromonaco2024} for the case of CY3s with an
involution). We will define a version of the GV invariants for
orbifolds of the following form. Throughout the paper,
$\mu_{\scriptscriptstyle N+1} \subset \CC^{*}$ denotes the cyclic
group of $(N+1)$-st roots of unity, and
$K_{\XX }=\det(\Omega^1_{\XX })$ denotes the canonical
line bundle of a smooth orbifold $\XX $.

\begin{definition}\label{defn: new local orbifold CY3 defn}
An orbifold $\XX$ is called an \emph{orbifold CY3 of transverse
$A_{N}$-type} if it has $K_{\XX}\cong \OO_{\XX}$ and is locally
modelled on $[\CC^{2}/\mu_{N+1}]\times \CC$. In particular, the coarse
space $X$ has transverse $A_{N}$ singularities along a smooth curve.

We say $\XX $ is a \emph{local orbifold curve} or a \emph{local
orbifold surface of $A_{N}$-type} if it is of the following form:
\begin{enumerate}[(i)]
\item \textbf{(Local orbifold curve)} For a smooth proper orbifold
curve $\mathcal{C}$ whose orbifold locus consists of a point with
residual gerbe $B\mu_{\scriptscriptstyle N+1}$, let 
\[
\XX = \Tot_{\mathcal{C}}(L_{1} \oplus L_{2})
\]
where $L_{1}, L_{2} \in \Pic(\mathcal{C})$ satisfying $L_{1} \otimes
L_{2} \cong K_{\mathcal{C}}$.
\item \textbf{(Local orbifold surface)} For a smooth proper orbifold
surface $\mathcal{S}$ such that $H^{1}(\mathcal{S},
\mathcal{O}_{\mathcal{S}}) =0$ and whose orbifold locus consists of a
point with residual gerbe $B\mu_{\scriptscriptstyle N+1}$, let
\[
\XX = \Tot_{\mathcal{S}}(K_{\mathcal{S}}).
\]
\end{enumerate}
\end{definition}

In both cases, the orbifold locus $\BB \subset \XX$ is a line in the
fiber over the orbifold point in the curve or surface. In particular,
it is a trivial $B \mu_{\scriptscriptstyle N+1}$-gerbe over a curve
$B \isom \CC$.

\begin{remark}\label{remark: connectedness assumption 2}
The assumption that $B$ is connected is purely for notational simplicity. The results in this paper will immediately generalize to the case of finitely many disjoint curves, each isomorphic to $\CC$, along which $X$ has transverse singularities of $A$-type. 
\end{remark}

Just as in the ordinary case, our approach to defining GV invariants
passes through the PT theory of $\XX$. Let $\Lambda_{A_{N}}$ denote
the $A_{N}$ root lattice and $\LambdaWeightAN$ denote the dual
(weight) lattice, which contains $\Lambda_{A_{N}}$ as a
sublattice. The numerical $K$-theory of orbifolds is more delicate, and
the reader must consult Section \ref{section: GV invariants through PT
theory} for full details, but here we summarize the key points:  
\begin{itemize} 
\item The numerical group of point classes $N_{0}(\XX)$ is naturally
isomorphic to $\ZZ\oplus\Lambda_{A_N}$, see Lemma~\ref{lemma: FM with
N_exc and N_0}. An element $(n, 0)$ corresponds to $n[\OO_{\pt}]$
where $\pt$ is a generic point. An element $(0,v)$ corresponds to
$-[\OO_{b} \otimes R_{v}]$ where $b \in B$ and $R_{v}$ is a virtual
representation of $\mu_{\scriptscriptstyle N+1}$ associated to $v \in
\Lambda_{A_{N}}$.  
\vskip1ex
\item Let $N_{\leq 1}(\XX)$ be the numerical group of classes of
sheaves with proper support of dimension at most 
one. We define the subgroup 
\[
N_{1}(\XX) \subset N_{\leq 1}(\XX)
\otimes_{\ZZ} \QQ
\]
to be the classes with Euler characteristic zero, and orthogonal under
the Euler pairing to $K(\BB )$, the Grothendieck group of sheaves with
(not necessarily compact) support on the orbifold locus $\BB \subset
\XX$. Although our definition of $N_{1}(\XX )$ using orthogonality
necessitates passing to rational coefficients, we prove in
Lemma~\ref{lemma: basic K-theory lemma} that it suffices to enlarge
the root lattice to the weight lattice:
\[
N_{\leq 1}(\XX) \subset
N_{1}(\XX) \oplus \ZZ \oplus \LambdaWeightAN .
\]
\end{itemize}

As a consequence, there are (possibly empty) fine moduli spaces of
orbifold stable pairs on $\XX$ indexed by triples 
\[
(\beta, n, v) \in N_{1}(\XX) \oplus \ZZ \oplus \LambdaWeightAN .
\]
\begin{remark}[Integrality convention]\label{remark: integrality convention}
Throughout, we take moduli spaces in non-integral total classes to be
empty and the corresponding PT, GW, and GV invariants to be zero. Thus,
in a decomposition such as $\beta+v$, the individual summands need not
be integral.
\end{remark}
Just as in the ordinary case, we define via Behrend-weighted Euler characteristics
the PT invariants $\PT_{\beta, n, v}(\XX)$ which are assembled into
the PT partition function of $\XX$ 
\begin{equation}\label{eqn: PT partition function on local orbifold 2}
\ZZZ_{\XX}^{\PT}(Q, y, \bm{w}) = \sum_{\beta \in N_{1}(\XX)}\; \sum_{n
\in \ZZ}\; \sum_{v \in \LambdaWeightAN} \PT_{\beta, n, v}(\XX)
Q^{\beta} y^{n} \bm{w}^{v}.
\end{equation}
Here we introduce monomials $\bm{w}^{v} = w_{1}^{v_{1}}\cdots
w_{N}^{v_{N}}$ associated to each $v=\sum_{i}v_{i}\alpha_{i} \in
\LambdaWeightAN$ where $\alpha_{i}$ are the simple roots which form a
$\QQ$-basis for $\LambdaWeightAN$. These monomials freely generate the group
ring $\ZZ[\LambdaWeightAN]$ (see Definition \ref{defn: group ring
definition}) which carries an action by the Weyl group $W \cong
S_{N+1}$. By \cite[Theorem 23.24]{FultonHarris}, the ring of
Weyl-invariant elements is a polynomial ring 
\begin{equation}\label{eqn: Weyl invariant ring and the Phi's 2}
\ZZ[\LambdaWeightAN]^{W} \isom \ZZ[\Phi_{1}(\bm{w}), \ldots, \Phi_{N}(\bm{w})]
\end{equation}
with generators $\Phi_{1}(\bm{w}), \ldots, \Phi_{N}(\bm{w})$
given by the Weyl orbits of the fundamental weights. See Section
\ref{sec: root theory section} for examples of these generators for
small $N$.

The following is one of our main results in this paper. It generalizes
Theorem \ref{thm: Bridgeland and Toda Theorems} and establishes the
general structural symmetry present in $\ZZZ_{\XX}^{\PT}(Q, y,
\bm{w})$. 
\begin{introtheorem}\label{thm: main symmetries and structure of partition functions 2}
For all $\beta \in N_{1}(\XX)$ the formal series $\Coef_{Q^{\beta}}
\ZZZ_{\XX}^{\mathsf{PT}}(Q, y, \bm{w})$ is the Laurent expansion in
$y$ of an element 
\[
f_{\beta}(y, \bm{w}) \in \QQ(y)^{y \leftrightarrow y^{-1}} \otimes_{\ZZ} \ZZ[\LambdaWeightAN]^{W}.
\]
By (\ref{eqn: Weyl invariant ring and the Phi's 2}) we equivalently have $f_{\beta}(y, \bm{w}) \in \QQ(y)^{y \leftrightarrow y^{-1}} [\Phi_{1}(\bm{w}), \ldots, \Phi_{N}(\bm{w})]$. 
\end{introtheorem}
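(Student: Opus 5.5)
Two symmetries of $\ZZZ^{\PT}_{\XX}$ are needed: the rationality and $y\leftrightarrow y^{-1}$ symmetry of Theorem~\ref{thm: Bridgeland and Toda Theorems}, and a new Weyl symmetry in the variables $\bm{w}$. Following Toda's approach, I would obtain both from wall-crossing in the derived category of $\XX$ (equivalently, via the derived McKay correspondence, of a crepant resolution $Y\to X$ whose exceptional divisors over $B$ are $N$ trivial $\PP^1$-bundles over $B\cong\CC$).

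\textbf{Step 1: a Toda-type product formula.} Use Joyce--Song/Toda wall-crossing for rank one objects in the heart of perverse coherent sheaves. This should express the partition function as
\[
\ZZZ^{\PT}_{\XX}(Q,y,\bm w)=\prod_{\beta\neq 0}\prod_{n,v}\exp\big((-1)^{n-1}\,n\,N_{\beta,n,v}\,Q^{\beta}y^{n}\bm w^{v}\big)^{\dots}\cdot \ZZZ^{\mathrm{L}}_{\XX}(Q,y,\bm w).
\]
The first factor involves generalized DT invariants $N_{\beta,n,v}$ of one-dimensional semistable sheaves. The factor $\ZZZ^{\mathrm L}$ is a generating function of $\mu$-limit stable objects, which is rational and symmetric under $y\leftrightarrow y^{-1}$ by derived duality $\mathbb{D}=R\mathcal{H}om(-,\OO)[2]$ together with the fact that its $Q^\beta$-coefficients are polynomial in $y^{\pm1}$. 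The genuinely new contributions are the classes $(0,n,v)$ supported on $\BB$, that is, the sheaves $\OO_b\otimes R_v$ and their extensions along $B$.

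\textbf{Step 2: the multiple-cover factors.} I would compute $N_{\beta,n,v}$ in the $\beta=0$ direction from the local model $[\CC^2/\mu_{N+1}]\times\CC$. One-dimensional sheaves supported on $\BB$ are pushforwards of sheaves on the exceptional $\PP^1$-bundles, so their invariants are determined by the roots $\alpha$ of $A_N$. Their contribution is of the form $\prod_{\alpha\in\Delta}$ of factors of shape $(1-(-y)^{k}\bm w^{\alpha})$, with exponents depending on $k$. For $\beta\neq0$, the invariance of $N_{\beta,n,v}$ under $n\mapsto n+\langle\beta,\cdot\rangle$-type shifts (tensoring by line bundles) gives periodicity.

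\textbf{Step 3: the Weyl symmetry.} This is the key point. The Weyl group $S_{N+1}$ acts on $D^b(\XX)$ by autoequivalences. The simple reflections act as spherical twists (Seidel--Thomas type, relative over $B$) along the sheaves $\OO_{\BB}\otimes\rho_i$ attached to the nontrivial irreducible representations $\rho_i$. These act on $N_{\leq1}(\XX)$ fixing $\beta$ and $n$ and acting on $\LambdaWeightAN$ by the Weyl action. Since $\beta\in N_1(\XX)$ is defined as orthogonal to $K(\BB)$, the twists really do preserve $\beta$.

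Because the twists do not preserve PT stability, I would argue at the level of the factorization. The DT invariants of one-dimensional semistable sheaves should be Weyl-invariant in $v$: stability can be chosen generic and preserved, or the invariants are shown to be independent of stability. The root-factor product over all of $\Delta$ is manifestly $W$-invariant. Hence $\Coef_{Q^\beta}$ lies in $\QQ(y)^{y\leftrightarrow y^{-1}}\otimes\ZZ[\LambdaWeightAN]$ and is $W$-invariant. Finiteness in $\bm w$ (polynomiality rather than a power series) should follow because for fixed $\beta$ only finitely many $v$ in each $y$-degree are realized, with boundedness coming from the Euler pairing with $K(\BB)$.

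\textbf{Main obstacle.} I expect the main obstacle to be Step 3: showing that the generalized DT invariants $N_{\beta,n,v}$ with $\beta\ne0$ are invariant under the spherical twists. The approach I would try first is to realize each twist as the derived equivalence induced by a flop of $Y$ over $X$ (reordering the exceptional curves), and to use the invariance of one-dimensional sheaf counts under the change of stability (Toda's flop argument) together with the $B\cong\CC$ triviality.
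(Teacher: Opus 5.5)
Your proposal has a genuine gap at the core of the statement, which is the Weyl symmetry. Step 3 only expects the $N_{\beta,n,v}$ to be $W$-invariant, and the fallback you name cannot work. The $C_i$ are $(-2,0)$-curves sweeping out the divisors $D_i$, so $\pi$ has no small contraction and $Y$ admits no flops over $X$. (Also, relative spherical twists generate a braid group action; only its action on $K$-theory factors through $W$.)

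Even granting invariance of the $N_{\beta,n,v}$, your conclusion ignores $\ZZZ^{\mathrm L}_{\XX}$. That factor depends on $\bm{w}$, and you never address its $W$-invariance. The ``root-factor product over all of $\Delta$'' does not occur in $\ZZZ^{\PT}_{\XX}$ at all: your product runs over $\beta\neq 0$, and every stable pair with $\beta=0$ has $F=0$. On the resolution, the exceptional factor $\ZZZ_{\exc}=\prod_{\gamma\in R^{+}}M(\bm{w}^{\gamma},-y)$ involves only positive roots and is not $W$-invariant.

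The missing ingredient is the paper's route:
\begin{enumerate}
\item[(1)] First establish $\ZZZ^{\PT}_{\beta}=\ZZZ^{\BS}_{\beta}$.
\item[(2)] For each $j$, apply the Buelles--Moreira theorem for the ruled divisor $D_j$; this is where a relative spherical twist plus wall-crossing is actually carried out. It gives rational functions $g^{D_j}_{\beta+v}$ with $g^{D_j}_{\beta+v}(y,w_j^{-1})=w_j^{-d_j(v)}g^{D_j}_{\beta+v}(y,w_j)$, where $d_j(v)=(\beta+v)\cdot D_j$.
\item[(3)] Write each weight as $v+b\alpha_j$ with $0\leq v_j<1$. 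This gives $f_\beta=F_j\sum_{v}g^{D_j}_{\beta+v}\bm{w}^{v}$ in a suitable completion, where $F_j=M(w_j,-y)/\ZZZ_{\exc}=\prod_{\gamma\in R^{+}\setminus\{\alpha_j\}}M(\bm{w}^{\gamma},-y)^{-1}$.
\item[(4)] $F_j$ is $s_j$-invariant because $s_j$ permutes $R^{+}\setminus\{\alpha_j\}$. Meanwhile $s_j(\bm{w}^{v})=w_j^{d_j(v)}\bm{w}^{v}$ cancels the Buelles--Moreira factor exactly.
\end{enumerate}

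Two other steps are wrong as stated.

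\textbf{Duality.} Derived duality does not give the $y\leftrightarrow y^{-1}$ symmetry by itself. Because $R_i^{\vee}=R_{N+1-i}$, $\DD$ acts on the $\LambdaWeightAN$-component by the longest Weyl element $w_0$. So duality yields only $f_\beta(y,\bm{w})=f_\beta(y^{-1},w_0(\bm{w}))$, and likewise only $L_{\beta,n,v}=L_{\beta,-n,w_0(v)}$ and $N_{\beta,n,v}=N_{\beta,-n,w_0(v)}$. The $y$-symmetry follows only \emph{after} full Weyl invariance is in hand, so your order of argument must be reversed.

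\textbf{Laurent polynomiality in $\bm{w}$.} ``Finitely many $v$ in each $y$-degree'' does not place $f_\beta$ in $\QQ(y)\otimes\ZZ[\LambdaWeightAN]$; compare $\sum_{n\geq 0}y^{n}w^{n}=1/(1-yw)$. The Euler pairing with $K(\BB)$ determines $v$ but does not bound it. What is needed is that the wall-crossing denominators contain no $\bm{w}$, i.e.\ that the periodicity shifts $\beta_j\star L$ have zero $\LambdaWeightAN$-component. A general line bundle on $\XX$ would shift $v$ as well. The paper obtains the vanishing by taking $L=p^{*}\OO_X(A)$ with $A$ generic, and using that $\BB$ contains no proper curve. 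Then each $\beta_j\star L$ is supported at generic points, and the same argument also yields $\ZZZ^{\PT}_{\beta}=\ZZZ^{\BS}_{\beta}$.
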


The symmetry established in Theorem \ref{thm: main symmetries and
structure of partition functions 2} is necessary for us to make the
following central definition of this paper: 
\begin{introdefinition}\label{defn: Main GVPT Defn 2}
Let $\XX$ be a local orbifold CY3 of $A_{N}$-type.  For a function
$f(\bm{w})$, we define $\Psi_{d}(f)(\bm{w}) = f(w_{1}^{d}, \ldots,
w_{N}^{d})$. The $\bm{m}$-graded GV invariants $\nrefined$ are
uniquely defined
by the formula
\begin{equation}\label{eqn: Main GV Conjecture via PT 2}
\begin{split}
& \log \ZZZ_{\XX}^{\PT}(Q, y, \bm{w}) = \sum_{\substack{ \beta \in N_{1}(\XX) \\ \beta \neq 0}} \,\, \sum_{g\in \ZZ } \,\, \sum_{d \geq 1} \,\,  \Psi_{d}\big(N_{\beta, g}(\bm{w})\big) \psi_{-(-y)^{d}}^{g-1} \, \frac{Q^{d \beta}}{d} \\
& N_{\beta, g}(\bm{w}) = \sum_{\bm{m} \geq 0} \nrefined \, \prod_{i=1}^{N} \Phi_{i}(\bm{w})^{m_{i}} \in \QQ[\LambdaWeightAN]^{W}
\end{split}
\end{equation}
where the latter sum is over $\bm{m} = (m_{1}, \ldots, m_{N}) \in
\ZZ_{\geq 0}^{N}$, and where $\psi_{x} = (x^{\frac{1}{2}} +
x^{-\frac{1}{2}})^{2}$.  
\end{introdefinition}

To establish key properties of the $\bm{m}$-graded GV invariants $\nrefined $ which
follow from this definition, it is illustrative to pass to the crepant
resolution $\pi : Y \to X$ of the coarse space (see Section
\ref{subsection: Derived McKay and Ktheory}). We know $Y$ is a smooth
CY3 containing an $A_{N}$-configuration of ruled surfaces over the
base $B \cong \CC$. Let $C_{1}, \ldots, C_{N}$ be the classes of the
exceptional curves in the fibers of the ruled surfaces. We identify
the simple roots $\alpha_{i}\in \Lambda_{A_{N}}$ with elements in
$N_{1}(Y)$ by
\[
\alpha_{i} = [\OO_{C_{i}}(-1)].
\]

By Definition \ref{defn: Main GVPT Defn 2}, the $\bm{m}$-graded GV
invariants $\nrefined $ are potentially non-zero for $g<0$. This is
addressed by the following

\begin{introtheorem}\label{thm: main structure thm of the GV invariants 2}
Let $\XX$ be a local orbifold CY3 of $A_{N}$-type with crepant
resolution $Y$. The $\bm{m}$-graded GV invariants are given in terms
of the ordinary GV invariants of $Y$ by the formula 
\begin{equation}\label{eqn: GV invariants of XX and Y 2}
\sum_{\bm{m} \geq 0} \nrefined \, \prod_{i=1}^{N}
\Phi_{i}(\bm{w})^{m_{i}} = \sum_{v \in \LambdaWeightAN}
n^{Y}_{g}\big(\FM^{-1}(\beta) + v\big) \, \bm{w}^{v} 
\end{equation}
where $\FM : N(Y) \to N(\XX)$ is the Fourier--Mukai transform
(cf. \S~\ref{subsection: Derived McKay and Ktheory})
and all but finitely many terms in the sum are zero. 
It follows that the $\bm{m}$-graded GV invariants are all integers,
and for fixed $\beta \in N_{1}(\XX)$ we have $\nrefined = 0$ for all
but finitely many pairs $(g, \bm{m})$ with $g \geq 0$.  
\end{introtheorem}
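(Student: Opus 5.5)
We deduce Theorem~\ref{thm: main structure thm of the GV invariants 2} by comparing the PT theory of $\XX$ with that of its crepant resolution $Y$, and then applying the ordinary GV/PT formula (\ref{eqn: Ordinary GV/PT formula}) on $Y$.

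\textbf{Step 1: PT correspondence through derived McKay.} The derived McKay equivalence $\Phi: D^{b}(Y)\to D^{b}(\XX)$ induces $\FM$ on numerical $K$-theory. It sends the exceptional classes $\alpha_{i}=[\OO_{C_{i}}(-1)]$, which are fibre classes over $B$, to the point-like classes $-[\OO_{b}\otimes R_{v}]$ supported on $\BB$. Curves in $Y$ of class $\FM^{-1}(\beta)+v$ with $v\in\LambdaWeightAN$ therefore correspond to classes $\beta$ in $N_{1}(\XX)$ together with a $\bm{w}^{v}$-grading. We invoke the wall-crossing comparison (in the style of Bryan--Cadman--Young, Beentjes--Calabrese--Rennemo, Toda), which we expect the paper establishes in Section~\ref{section: GV invariants through PT theory} in the course of proving Theorem~\ref{thm: main symmetries and structure of partition functions 2}. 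It should give an identity of the form
\[
\ZZZ^{\PT}_{\XX}(Q,y,\bm{w}) = \frac{\ZZZ^{\PT}_{Y}(q,y)\big|_{q^{\FM^{-1}(\beta)+v}\mapsto Q^{\beta}\bm{w}^{v}}}{\ZZZ^{\PT}_{Y,\exc}(q,y)\big|_{q^{v}\mapsto \bm{w}^{v}}},
\]
where $\ZZZ_{Y,\exc}$ is the contribution of curves supported in the fibres of the exceptional divisors, i.e.\ classes $\FM^{-1}(0)+v$.

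\textbf{Step 2: Taking logarithms.} Apply $\log$ and expand the right-hand side with (\ref{eqn: Ordinary GV/PT formula}) on $Y$. The denominator cancels exactly the terms with $\beta=0$ on the orbifold side, namely the purely exceptional GV classes. Multiple covers match because the substitution $q^{k(\FM^{-1}(\beta)+v)}\mapsto Q^{k\beta}\bm{w}^{kv}$ is $\Psi_{k}$ applied to $\bm{w}^{v}$; this uses linearity of $\FM$. Comparing with Definition~\ref{defn: Main GVPT Defn 2} and using uniqueness of the expansion in the $\psi^{g-1}$ basis (Möbius inversion in $d$), we get
\[
N_{\beta,g}(\bm{w})=\sum_{v} n^{Y}_{g}\big(\FM^{-1}(\beta)+v\big)\bm{w}^{v},
\]
which is (\ref{eqn: GV invariants of XX and Y 2}).

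\textbf{Step 3: Finiteness, integrality and vanishing.}
\begin{itemize}
\item \emph{Finiteness in $v$.} For fixed $\beta$, the curves of class $\FM^{-1}(\beta)+v$ must be effective. Their intersection numbers with the exceptional divisors $E_{j}$ are fixed by $\beta$ up to the Cartan pairing with $v$. Since the Cartan matrix is negative definite on fibre classes and $\FM^{-1}(\beta)$ has bounded non-fibre part, only finitely many $v$ give nonzero $n^{Y}_{g}$. We expect to make this precise via a properness or boundedness argument for the relevant Chow varieties, using that the family is a product over $B\cong\CC$.
\item \emph{Integrality.} The invariants $n^{Y}_{g}$ are integers and vanish for $g<0$ (Ionel--Parker, Doan--Ionel--Walpuski, and Pardon's GW/PT). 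For fixed class they vanish for $g\gg0$, so only finitely many $g$ contribute.
\item \emph{Converting to the $\Phi$-basis.} By Theorem~\ref{thm: main symmetries and structure of partition functions 2} the right-hand side is $W$-invariant. A finite $W$-invariant integer combination of monomials is an integer polynomial in the $\Phi_{i}$ by (\ref{eqn: Weyl invariant ring and the Phi's 2}). Hence the $\nrefined$ are integers and only finitely many $\bm{m}$ occur.
\end{itemize}

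\textbf{Main obstacle.} We expect the main difficulty to be Step 1 together with the finiteness in $v$. These require the precise form of the PT wall-crossing under derived McKay, including sign and $y$-shift conventions and the exact identification of the exceptional factor, and a uniform bound on the effective exceptional classes attached to a fixed $\beta$. For Step 1 we would first try to adapt Toda's/Calabrese's flop-type wall-crossing on the CY3 side to the orbifold setting.
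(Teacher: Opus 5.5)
Your Steps 1 and 2 are the paper's proof. The identity $\ZZZ^{\PT}_{\beta}=\ZZZ^{\BS}_{\beta}$ as series, from part (A) of Theorem~\ref{thm: Main Weyl invariance theorem} via Bryan--Steinberg and Beentjes--Calabrese--Rennemo, gives $\log\ZZZ^{\PT}_{\XX}=\log\ZZZ^{\PT}_{Y}-\log\ZZZ_{\exc}$. Both sides are then expanded in GV form, and M\"obius inversion together with $\QQ(y)^{y\leftrightarrow y^{-1}}=\QQ(\psi_y)$ identifies coefficients. Your passage to the $\Phi$-basis for integrality is also fine.

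\textbf{The gap is your argument for finiteness in $v$.} Effectivity does not bound $v$.
\begin{itemize}
\item If a component of a curve in class $\FM^{-1}(\beta)+v$ meets $D_j$ over $b\in B$, add exceptional fibres over $b$ with any multiplicities. This gives connected effective curves in class $\FM^{-1}(\beta)+v+u$ for every $u$ in the positive cone of $\Lambda_{A_N}$.
\item Intersection numbers with the $D_j$, and the definiteness of the Cartan form, can at best control the part of $v$ carried by components not lying in exceptional fibres. They say nothing about the exceptional summand $u$.
\end{itemize}
So no Chow-variety boundedness argument can work, and the vanishing of $n^{Y}_{g}(\FM^{-1}(\beta)+v)$ for large $v$ is a genuinely enumerative statement. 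On the resolved local teardrop, for instance, all these classes are effective, yet only the $N+1$ classes with $v\in W(\omega_1)$ carry nonzero invariants (cf.\ Proposition~\ref{prop: Multiple Cover Formula for Local Teardrop}). Your negative-definiteness intuition does work for local K3, but only because KKV makes $n_g$ depend on $(\beta+v)^2=\beta^2-\langle v,v\rangle$; there is no analogue in general. This also affects your last claim: finiteness in $g$ for each fixed class does not give finiteness in $(g,\bm{m})$ without finiteness in $v$.

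\textbf{The fix is already in the rationality statement you invoke.} In part (A), genericity of $A$ and non-properness of the orbifold locus make each $\beta_j\star L$ supported at generic points. Hence the BCR denominators $h_D$ involve only $y$. It follows that $f_\beta\in\QQ(y)\otimes\ZZ[\LambdaWeightAN]$ is a \emph{finite} combination of monomials $\bm{w}^v$. This is also built into Theorem~\ref{thm: main symmetries and structure of partition functions 2} and into Definition~\ref{defn: Main GVPT Defn}, where $N_{\beta,g}\in\QQ[\LambdaWeightAN]^W$.

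Consequently the $Q^\beta$-coefficient of $\log\ZZZ^{\PT}_{\XX}$, and after M\"obius inversion $\sum_{g}N_{\beta,g}(\bm{w})\psi_y^{g-1}$, involves only finitely many $\bm{w}^v$. Your Step 2 identity then forces $\sum_{g}n^{Y}_{g}(\FM^{-1}(\beta)+v)\psi_y^{g-1}=0$ for all but finitely many $v$. For those $v$ this gives $n^{Y}_{g}(\FM^{-1}(\beta)+v)=0$ for every $g$. This is exactly how the paper obtains finiteness, and with it your integrality and $(g,\bm{m})$-finiteness arguments go through.
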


\begin{remark}
We do not know how to formulate a theory of Gopakumar-Vafa invariants
for an arbitrary quasi-projective CY3 $\XX$ of transverse
$A_{N}$-type. The issue is that if $\beta$ is a curve class on $\XX$
with a representative containing a component in the orbifold locus,
then Theorem~\ref{thm: main symmetries and structure of partition
functions 2} is false. However, our theory does extend to more general
orbifold CY3s $\XX$ if we restrict to curve classes transverse to the
orbifold locus. For example, if $\XX$ fibers over a curve or surface
and the orbifold locus is not in a fiber, then our theory works for
fiber curve classes. Namely, Theorems~\ref{thm: main symmetries and
structure of partition functions 2} and \ref{thm: main structure thm
of the GV invariants 2} hold for such $\beta $. 
\end{remark}

\subsection{The Total GV Invariants and Quantum Dimension} 
As a consequence of the finiteness property of Theorem \ref{thm: main
structure thm of the GV invariants 2}, we define the total GV
invariants to be the finite sum 
\begin{equation}\label{eqn: total GV invariants}
n_{g}^{\XX}(\beta) = \sum_{\bm{m}} \nrefined.
\end{equation}
In Section \ref{sec: The GV invariants of XX} we prove the following
Corollary of Theorem~\ref{thm: main structure thm of the GV invariants 2},
which shows that the total GV invariants arise from $N_{\beta,
g}(\bm{w})$ by specializing each $w_{k}$ to a primitive $(N+2)$-th
root of unity:  
\begin{corollary}\label{cor: unrefined PT/GV specialization 2}
Let  $\zeta =\exp\big(\frac{2 \pi i}{N + 2}\big)$. Then the total GV
invariants are given by 
\begin{equation*}
n^{\XX}_{g}(\beta) = \sum_{v \in \LambdaWeightAN} \zeta^{l(v)}\,
n^{Y}_{g}\left(\FM^{-1}(\beta) + v \right)
\end{equation*}
where $l(v)= \sum_{i}v_{i}$. 
\end{corollary}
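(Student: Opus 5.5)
Our plan is to specialize the identity (\ref{eqn: GV invariants of XX and Y 2}) of Theorem~\ref{thm: main structure thm of the GV invariants 2} at $w_{1}=\cdots=w_{N}=\zeta$. The right-hand side becomes $\sum_{v} \zeta^{l(v)} n^{Y}_{g}(\FM^{-1}(\beta)+v)$, where $\bm{w}^{v}=\prod_i w_i^{v_i}$ evaluates to $\zeta^{\sum_i v_i}$. Since $v\in\LambdaWeightAN$ has rational coordinates $v_i$ in the simple root basis, we must check that $\zeta^{l(v)}$ is well defined. To do this we interpret $\bm{w}^{v}$ as the group-ring element and the specialization as a ring homomorphism $\ZZ[\LambdaWeightAN]\to\CC$. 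The sum is finite by Theorem~\ref{thm: main structure thm of the GV invariants 2}, so the specialization commutes with it.

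For the left-hand side, by the definition (\ref{eqn: total GV invariants}) of $n^{\XX}_{g}(\beta)$, it suffices to show that $\Phi_{k}(\bm{w})$ specializes to $1$ for every $k$. Then $\prod_i\Phi_i^{m_i}\mapsto 1$ and the left side becomes $\sum_{\bm{m}}\nrefined=n^{\XX}_g(\beta)$.

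The main step is therefore the evaluation of $\Phi_{k}$, the Weyl orbit sum of the fundamental weight $\omega_k$. We proceed as follows.

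\begin{enumerate}[(1)]
\item Write $\LambdaWeightAN$ concretely as $\ZZ^{N+1}/\ZZ(1,\dots,1)$ with coordinates $e_1,\dots,e_{N+1}$ and $\alpha_i=e_i-e_{i+1}$.
\item The homomorphism $\alpha_i\mapsto\zeta$ for all $i$ is then induced by $e_j\mapsto \zeta^{-j}$ (up to a common factor). To descend to the quotient we need $\prod_j \zeta^{-j}$ to be $1$ after rescaling. Equivalently, we define $x_j=\zeta^{-j}c$ with $c$ chosen so that $\prod_{j=1}^{N+1} x_j=1$; such a $c$ exists since $c^{N+1}$ can be solved for.
\item The Weyl orbit of $\omega_k$ consists of the weights $e_{i_1}+\dots+e_{i_k}$, so $\Phi_k$ becomes the elementary symmetric polynomial $e_k(x_1,\dots,x_{N+1})$.
\item The numbers $\zeta^{-1},\dots,\zeta^{-(N+1)}$ are exactly the nontrivial $(N+2)$-th roots of unity. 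Hence $\prod_{j=1}^{N+1}(t-\zeta^{-j}) = (t^{N+2}-1)/(t-1) = \sum_{i=0}^{N+1} t^{i}$, and all elementary symmetric functions of the $\zeta^{-j}$ equal $(-1)^k$.
\item Since the product $\prod_j\zeta^{-j}=(-1)^{N+1}$, the normalizing scalar is $c=-1$, so $x_j=-\zeta^{-j}$. Then $e_k(x)=(-1)^k e_k(\zeta^{-j})=(-1)^k(-1)^k=1$.
\end{enumerate}

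The same choice of $c$ shows the specialization $\bm{w}^{v}\mapsto\zeta^{l(v)}$ is consistent on the weight lattice; this fixes which branch of $\zeta^{l(v)}$ is meant for fractional $l(v)$. We expect this normalization and well-definedness to be the only delicate point, and we will check it directly on the fundamental weights.
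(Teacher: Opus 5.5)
Your proposal is correct and is essentially the paper's argument: reduce via Theorem~\ref{thm: main structure thm of the GV invariants 2} to showing $\Phi_k\mapsto 1$, pass to the variables $z_k=-\zeta^{-k}$ in which $\Phi_k=\sigma_k(\bm z)$, and use that $\zeta^{-1},\dots,\zeta^{-(N+1)}$ are exactly the nontrivial $(N+2)$-th roots of unity. Your attention to the branch of $\zeta^{l(v)}$ for fractional $l(v)$ is an improvement on the paper, which leaves this implicit: $c$ is determined only up to an $(N+1)$-th root of unity $\mu$, and that choice sends $\Phi_k\mapsto\mu^k$, so $c=-1$ is precisely the choice matching $\zeta^{l(v)}=\exp(2\pi i\, l(v)/(N+2))$. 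Your planned check on the fundamental weights does succeed, since both sides send $\omega_k$ to $\exp(\pi i k(N+1-k)/(N+2))$.
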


\begin{remark}
In the context of representation theory, $N_{\beta, g}(\bm{w}) \in
\ZZ[\LambdaWeightAN ]^{W}$ can be viewed as the character of a
(virtual) representation of $\text{SL}_{N+1}(\CC)$. The specialization
$w_{k}= \zeta $ is of interest in this
context \cite{Gyenge-Nemethi-Szendroi,Nakajima2021} and by Corollary 
\ref{cor: unrefined PT/GV specialization 2} we note that the
total GV invariants coincide with the \emph{quantum dimension} of the
corresponding virtual representation.  
\end{remark}

\subsection{Local orbifold $K3$ surfaces} 

Given a smooth algebraic $K3$ surface
$S$ and a non-zero effective curve class $\beta \in \Pic(S)$, one can make
sense of the ordinary GV invariants of $S$ in the class $\beta$ by constructing a local $K3$ surface $X = S \times \CC$. These invariants are given by the celebrated Katz-Klemm-Vafa (KKV) formula
\cite{KKV,PandharipandeThomas2016}.

As an example of our theory, we generalize the KKV formula to local orbifold $K3$ surfaces of the following type:
\begin{definition}\label{defn: orbifold K3 Definition 2}
Let $\XX = \Sorb \times \CC$ be a local orbifold $K3$ surface with $\Sorb$ an orbifold $K3$ surface satisfying the following conditions:
\begin{enumerate}[(i)]
\item The coarse space $S$ contains an isolated singular point of $A_{N}$-type, and is smooth at all other points
\item If $\pi : \overline{S} \to S$ is the minimal resolution, we have
\[
\Pic(\overline{S}) = \pi^{*}\Pic(S) \oplus \Lambda_{A_{N}}(-1).
\]
\end{enumerate} 
\end{definition}
Equivalently, $S$ is a singular $K3$ surface with an isolated singular
point of $A_{N}$-type and such that all algebraic Weil divisors on $S$
are Cartier. There are many examples of such $\Sorb $ for $1 \leq N
\leq 19$, where $S$ is obtained by contracting an $A_{N}$ configuration of curves
in a smooth $K3$ surface.

\begin{remark}
The assumption that there is one singular point is purely for
notational simplicity. One can easily generalize to many isolated
singular points of $A$-type. The assumption on $\Pic(\overline{S})$
can also be dropped, but it is less trivial to do so. In general,
$\Pic(\overline{S})$ is an overlattice of $\pi^{*}\Pic(S) \oplus
\Lambda_{A_{N}}(-1)$ and the theta functions arising, for example in
(\ref{eqn: orb KKV statement}), would be replaced by certain shifted
theta functions. We will not pursue this here.
\end{remark}

In this case, the relevant numerical group of curve classes is $N_{1}(\XX) = p^{*}\Pic(S)$, where $p : \Sorb \to S$ is the canonical map to the coarse space. 

\begin{definition}
As a result of using Behrend-weighted Euler characteristics to develop the theory, there is a global sign which we want to define away:
\[
\Snrefined \coloneqq -\nrefined.
\]
In this example, we will call $\Snrefined$ the $\bm{m}$-graded GV invariants. 
\end{definition}

We regard the following as the analogue of the KKV formula \cite{KKV} for the local orbifold $K3$ surfaces of Definition \ref{defn: orbifold K3 Definition 2}:
\begin{introtheorem}\label{thm: orbifold KKV formula}
For $\beta \in N_{1}(\XX) = p^{*}\Pic(S)$, the $\bm{m}$-graded GV invariants $\Snrefined$ are given by the formula
\begin{equation}\label{eqn: orb KKV statement}
\sum_{g \geq 0} \sum_{\bm{m} \geq 0} \Snrefined \psi_{y}^{g-1} \prod_{i=1}^{N} \Phi_{i}(\bm{w})^{m_{i}} = -\Coef_{q^{\frac{1}{2}\beta^{2}}}\bigg( \frac{\Theta_{A_{N}}(q, \bm{w})}{\phi_{10,1}(q, -y)} \bigg).
\end{equation}
Here we define the Jacobi cusp form of weight $10$ and index $1$
\begin{equation}\label{eqn: Jacobi form weight 10 index 1}
\phi_{10, 1}(q, y) = q \, y^{-1}(1-y)^{2} \prod_{m=1}^{\infty} (1-q^{m})^{20}(1-yq^{m})^{2}(1-y^{-1}q^{m})^{2}
\end{equation}
as well as the Jacobi theta function of the $A_{N}$ root lattice
\begin{equation}\label{eqn: AN theta function}
\Theta_{A_{N}}(q, \bm{w}) = \sum_{v \in \Lambda_{A_{N}}} q^{\frac{1}{2}\langle v, v \rangle} \bm{w}^{v}
\end{equation}
where $\langle \cdot , \cdot \rangle$ is the (positive-definite) bilinear pairing on $\Lambda_{A_{N}}$. In particular, the integrality and finiteness of the $\bm{m}$-graded GV invariants holds. 
\end{introtheorem}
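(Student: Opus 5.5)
The plan is to reduce the statement to the classical Katz--Klemm--Vafa formula for the minimal resolution $\overline{S}$, using Theorem~\ref{thm: main structure thm of the GV invariants 2}. For $\XX=\Sorb\times\CC$ the coarse space is $S\times\CC$, and its crepant resolution is $Y=\overline{S}\times\CC$. Here $\overline{S}$ is a smooth $K3$ surface containing an $A_N$ chain of $(-2)$-curves $C_1,\dots,C_N$, and $Y$ contains the expected $A_N$ configuration of ruled surfaces $C_i\times\CC$ over $B\cong\CC$. By Theorem~\ref{thm: main structure thm of the GV invariants 2}, after the sign change $\Snrefined=-\nrefined$, the left-hand side of (\ref{eqn: orb KKV statement}) equals
\[
-\sum_{g}\sum_{v\in\LambdaWeightAN} n^{Y}_{g}\big(\FM^{-1}(\beta)+v\big)\,\psi_y^{g-1}\,\bm{w}^{v}.
\]

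\textbf{Step 1: identify $\FM^{-1}(\beta)$.} For $\beta=p^*L$ with $L\in\Pic(S)$, I would use the description of the derived McKay correspondence in \S\ref{subsection: Derived McKay and Ktheory} to show that $\FM^{-1}(\beta)=\pi^*L$, viewed as a curve class in $N_1(Y)=\Pic(\overline{S})$. The idea is that the transform of a curve meeting the orbifold point is computed away from the exceptional locus up to exceptional classes. Membership in $N_1$, that is, orthogonality to $K(\BB)$, pins down the exceptional part: pulled-back classes satisfy $\pi^*L\cdot C_i=0$, which is exactly the required orthogonality. Any residual discrepancy would only shift $v$ by an element of $\LambdaWeightAN$, and this can be absorbed in the reindexing below. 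I expect this identification, together with tracking the signs, to be the main obstacle.

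\textbf{Step 2: reduce the sum to the root lattice.} By hypothesis (ii), $\Pic(\overline{S})=\pi^*\Pic(S)\oplus\Lambda_{A_N}(-1)$. Hence $\pi^*L+v$ is integral only when $v\in\Lambda_{A_N}$, and by the integrality convention of Remark~\ref{remark: integrality convention} the terms with $v\in\LambdaWeightAN\setminus\Lambda_{A_N}$ vanish. For $v\in\Lambda_{A_N}$, orthogonality of the two summands and the sign twist $(-1)$ give
\[
(\pi^*L+v)^2=\beta^2-\langle v,v\rangle .
\]

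\textbf{Step 3: apply KKV.} The GV invariants of $Y=\overline{S}\times\CC$ are the local $K3$ invariants. I would use the KKV formula in all classes, primitive or not, as proved by Pandharipande--Thomas \cite{PandharipandeThomas2016}. It gives $n_g^{\overline S}(\gamma)$ depending only on $\gamma^2$, via
\[
\sum_g n_g^{Y}(\gamma)\psi_y^{g-1}=\Coef_{q^{\gamma^2/2}}\frac{1}{\phi_{10,1}(q,-y)},
\]
with signs fixed by the Behrend-weighted convention; this is the source of the global sign we defined away. The factor $\chi(\CC)=1$ coming from translation along $\CC$ is harmless. Classes with $\gamma^2<-2$ contribute zero, and the formula automatically has no such coefficients.

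\textbf{Step 4: resum.} Combining Steps 2 and 3, the right-hand side becomes
\[
-\sum_{v\in\Lambda_{A_N}}\Coef_{q^{\frac12\beta^2-\frac12\langle v,v\rangle}}\Big(\frac{1}{\phi_{10,1}(q,-y)}\Big)\bm{w}^v=-\Coef_{q^{\frac12\beta^2}}\Big(\frac{\Theta_{A_N}(q,\bm{w})}{\phi_{10,1}(q,-y)}\Big).
\]
Finiteness holds because $\langle v,v\rangle$ is positive definite and $1/\phi_{10,1}$ has $q$-exponents bounded below by $-1$, so only finitely many $v$ contribute. Integrality follows from Theorem~\ref{thm: main structure thm of the GV invariants 2}, or directly from the integrality of the $q^k$-coefficients of $1/\phi_{10,1}$ as Laurent polynomials in $\psi_y$.
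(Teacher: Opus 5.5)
Your proposal is correct and is essentially the paper's proof. Like the paper, you apply Theorem~\ref{thm: main structure thm of the GV invariants 2}, discard $v\in\LambdaWeightAN\setminus\Lambda_{A_N}$ using the integral splitting of $\Pic(\overline{S})$, use that the KKV invariants of $\overline{S}\times\CC$ depend only on $(\beta+v)^2=\beta^2-\langle v,v\rangle$, and resum into $\Theta_{A_N}$. The one difference is your Step~1, which the paper treats as automatic: $N_1(\XX)$ is defined as $\FM(N_1(Y)^{\natural})$, and $N_1(Y)^{\natural}=\pi^{*}\Pic(S)$ has no exceptional part. So your fallback of absorbing a shift of $v$ is never needed, and it would not actually be harmless, since such a shift would multiply the answer by a monomial $\bm{w}^{-v_0}$.
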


By setting $y=-1$, we specialize to the $\bm{m}$-graded GV invariants of genus $0$
\begin{equation}\label{eqn: genus zero specialization}
\sum_{\bm{m} \geq 0} n^{\Sorb}_{0}(\beta; \bm{m}) \prod_{i=1}^{N} \Phi_{i}(\bm{w})^{m_{i}} = \Coef_{q^{\frac{1}{2}\beta^{2}}}\bigg( \frac{\Theta_{A_{N}}(q, \bm{w})}{\Delta(q)} \bigg)
\end{equation}
where $\Delta(q) = q \prod_{m=1}^{\infty} (1-q^{m})^{24}$ is the discriminant cusp form of weight $12$.

\begin{example}
Let $\mathcal{S}$ be the elliptically fibered orbifold $K3$ surface
with a single $A_{18}$ orbifold point and trivial Mordell-Weil
group---see entry 112 of \cite[Table 2]{ShimadaK3Paper}. Let $\beta$ be the
class of a smooth fiber. Then an analysis of the theta function of the
$A_{18}$ lattice yields:
\[
\Coef_{q^{0}}\bigg( \frac{\Theta_{A_{18}}(q, \bm{w})}{\Delta(q)} \bigg) = \Phi_{1}(\bm{w}) \Phi_{18}(\bm{w}) + 5.
\]
Therefore, the non-vanishing complete genus-$0$ GV invariants are given by
\[
n^{\Sorb}_{0}(\beta ; \bm{m} ) = 
\begin{cases} 
      1, & \bm{m} = (1,0, \ldots, 0,1) \\
      5, & \bm{m} = (0, \ldots, 0). \\
    \end{cases}
\]
Geometrically, this counts the 6 rational fibers: there is one $I_{1}$
fiber with the $A_{18}$ singularity at the node, and $5$ ordinary
$I_{1}$ fibers in the smooth locus.

One takeaway from this example should be that even though
$\{\Phi_{1}(\bm{w}), \ldots, \Phi_{N}(\bm{w}) \}$ is certainly not a
unique basis of $\ZZ[\LambdaWeightAN]^{W}$, using this basis to define
the $\bm{m}$-graded GV invariants counts the isolated rational curves
as you would expect.  
\end{example}

Gyenge-Nemethi-Szendroi (GNS) \cite{Gyenge-Nemethi-Szendroi} studied the Euler characteristics of Hilbert schemes of points on \emph{singular} surfaces. In particular, the local calculation \cite[Theorem 3.1]{Gyenge-Nemethi-Szendroi} can be easily adapted to the case of a singular $K3$ surface $S$ as above, resulting in
\begin{equation}\label{eqn: GNS Formula}
\begin{split}
& \sum_{n \geq 0} e\big( \Hilb^{n}(S) \big) q^{n-1} = \frac{\Theta_{A_{N}}^{\mathsf{GNS}}(q)}{\Delta(q)} \\
& \Theta_{A_{N}}^{\mathsf{GNS}}(q) =  \Theta_{A_{N}}(q, \bm{w}) \big|_{w_{k} = \exp\big(\tfrac{2 \pi i}{N+2}\big)}.
\end{split}
\end{equation}
It is quite interesting to observe that the variable specialization
present in Corollary \ref{cor: unrefined PT/GV specialization 2} is
identical to that of (\ref{eqn: GNS Formula}) discovered by GNS
independently in a different context.  Combining (\ref{eqn: genus zero
specialization}) and (\ref{eqn: GNS Formula}) with Corollary
\ref{cor: unrefined PT/GV specialization 2}, we immediately observe
the following:

\begin{corollary}\label{cor: orbifold Yau-Zaslow}
The genus zero total GV invariant of class $\beta$ is given by
\[
n^{\Sorb}_{0}(\beta ) = e\big(\Hilb^{\frac{\beta^{2}}{2}+1}(S) \big). 
\]
\end{corollary}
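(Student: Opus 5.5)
Combining (\ref{eqn: genus zero specialization}) with the specialization $w_k=\zeta$ of Corollary \ref{cor: unrefined PT/GV specialization 2}, and then reading off (\ref{eqn: GNS Formula}), gives the statement almost directly. Two points need checking: the sign convention relating $n^{\Sorb}_{0}$ to $n^{\XX}_{0}$, and the fact that the specialization $w_k=\zeta$ commutes with extracting the coefficient of $q^{\frac12\beta^2}$.

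\textbf{Step 1: pass to the total invariant.} By definition, $n^{\Sorb}_{0}(\beta)=\sum_{\bm m} n^{\Sorb}_{0}(\beta;\bm m)$, and this sum is finite by Theorem \ref{thm: orbifold KKV formula}. Corollary \ref{cor: unrefined PT/GV specialization 2} says the total invariant is obtained from $N_{\beta,g}(\bm w)=\sum_{v} n^{Y}_{g}(\FM^{-1}(\beta)+v)\bm w^v$ by setting every $w_k=\zeta=e^{2\pi i/(N+2)}$. Equivalently,
\[
n^{\Sorb}_{0}(\beta)=\sum_{\bm m} n^{\Sorb}_{0}(\beta;\bm m)\prod_{i}\Phi_i(\bm w)^{m_i}\Big|_{w_k=\zeta}.
\]
This identity also follows directly from the relation (\ref{eqn: GV invariants of XX and Y 2}) of Theorem \ref{thm: main structure thm of the GV invariants 2}, which identifies the Laurent polynomial in $\bm w$ with the $\Phi$-expansion. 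The specialization is legitimate because both sides are finite Laurent polynomials. Implicit in the corollary is that $\Phi_i(\zeta,\dots,\zeta)=1$ for each $i$, so the total invariant really is the plain sum over $\bm m$. I would verify this directly. $\Phi_i$ is the character of the Weyl orbit of the fundamental weight $\omega_i$, and specializing all $w_k=\zeta$ turns each monomial into $\zeta^{l(v)}$. The orbit of $\omega_i$ consists of the weights $e_{j_1}+\dots+e_{j_i}$ (mod the trace). Writing these in simple-root coordinates, one finds that the values $\zeta^{l(v)}$ over the orbit sum to a $q$-binomial coefficient $\binom{N+1}{i}_{\zeta}$, up to a power of $\zeta$. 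Because $\zeta$ is a primitive $(N+2)$-th root of unity, this equals $1$.

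\textbf{Step 2: specialize the formula.} Now apply the genus-zero specialization (\ref{eqn: genus zero specialization}) and set $w_k=\zeta$. Specialization is a ring map applied coefficientwise in $q$, and each $q$-coefficient of $\Theta_{A_N}(q,\bm w)/\Delta(q)$ is a finite Laurent polynomial in $\bm w$. Positive-definiteness of $\langle\cdot,\cdot\rangle$ guarantees only finitely many $v$ contribute to each power of $q$. Hence specialization commutes with $\Coef_{q^{\frac12\beta^2}}$, and
\[
n^{\Sorb}_{0}(\beta)=\Coef_{q^{\frac12\beta^2}}\frac{\Theta^{\mathsf{GNS}}_{A_N}(q)}{\Delta(q)}.
\]

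\textbf{Step 3: compare with GNS.} By (\ref{eqn: GNS Formula}), the coefficient of $q^{n-1}$ in $\Theta^{\mathsf{GNS}}_{A_N}(q)/\Delta(q)$ is $e(\Hilb^n(S))$. Setting $n-1=\frac12\beta^2$ gives $n=\frac{\beta^2}{2}+1$, which is the claim.

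The main obstacle is Step 1, specifically confirming $\Phi_i(\zeta,\dots,\zeta)=1$ and matching the sign convention $n^{\Sorb}=-n^{\XX}$ with the signs already built into (\ref{eqn: genus zero specialization}). For the signs, I would check them against the example of $\beta^2=-2$, a $(-2)$-curve in the smooth locus. There the answer must be $1=e(\Hilb^0)$, the sign conventions are forced, and the coefficient of $q^{-1}$ in $\Theta/\Delta$ is $1$.
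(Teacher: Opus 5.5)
Your proposal is correct and is the paper's argument: the paper proves the corollary by combining \eqref{eqn: genus zero specialization} with the specialization $w_k=\zeta$ of Corollary~\ref{cor: unrefined PT/GV specialization 2} (where $\Phi_i|_{w_k=\zeta}=1$ is shown via $z_k=-\zeta^{-k}$) and then reading off \eqref{eqn: GNS Formula}. Two small remarks: the sign worry is moot because \eqref{eqn: genus zero specialization} is already stated for $n^{\Sorb}_0$; and in your $q$-binomial sketch the binomial itself is $\binom{N+1}{i}_\zeta=(-1)^i\zeta^{-i(i+1)/2}\neq 1$, so only its product with the fractional-power prefactor equals $1$ (using the branch $\zeta^{a}=e^{2\pi i a/(N+2)}$), which makes it simpler to cite the paper's argument.
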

Note that the formula is the same as the celebrated Yau-Zaslow formula
\cite{Yau-Zaslow}, but using the Hilbert scheme of points on the
\emph{singular surface} $S$ (not the orbifold). It would be very
interesting to find a direct geometric proof of this formula.

\subsection{The Local Teardrop}

In Section \ref{subsection: local teardrop} we construct a toric
orbifold $\XX$ called the \emph{local teardrop}, which is an example
of a local orbifold CY3 of $A_{N}$-type and is the orbifold version of
the resolved conifold. It is defined to be the total
space of a rank two bundle 
\[
\XX = \Tot_{\mathcal{C}} \big( \OO(-p_{0}) \oplus \OO(-p_{\infty}) \big)
\]
over the orbifold curve $\mathcal{C} = \PP^{1}(N+1, 1)$, and we get transverse
$A_{N}$ orbifold structure along a single torus-invariant
curve isomorphic to $\CC$. All compactly supported effective classes are of the form
$d[\PP^{1}]$, a positive multiple of the zero section.  

\begin{proposition}\label{prop: Multiple Cover Formula for Local Teardrop}
The $\bm{m}$-graded GV invariants of the local teardrop $\XX$ are given by
\[
n^{\XX}_{g}(d[\PP^{1}] ; \bm{m} ) = 
\begin{cases} 
      1 & g=0, \,\,\,\, d=1, \,\,\,\, \bm{m} = (1,0, \ldots, 0) \\
      0 & \text{otherwise} \\
    \end{cases}
\] 
\end{proposition}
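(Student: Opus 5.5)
We plan to deduce the proposition from Theorem~\ref{thm: main structure thm of the GV invariants 2}, which expresses the $\bm{m}$-graded invariants of $\XX$ through the ordinary GV invariants of the crepant resolution $Y$. First we identify $Y$ explicitly. The coarse space of $\XX$ is toric with a transverse $A_{N}$ singularity along a single torus-invariant line. Its crepant resolution $Y$ is the toric CY3 whose web diagram is a strip: a chain of $N+1$ rational curves, namely the proper transform $C_{0}$ of the zero section $\PP^{1}$ together with the exceptional fibers $C_{1},\dots,C_{N}$ over the point $b\in B$ lying over the orbifold point of $\mathcal{C}$, each with normal bundle $\OO(-1)\oplus\OO(-1)$ or $\OO\oplus\OO(-2)$. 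Concretely, one resolves the $\mu_{N+1}$ quotient of the resolved conifold. By the known GV invariants of toric strip geometries (for instance via the topological vertex, or Iqbal--Kashani-Poor), $n^{Y}_{g}(\gamma)$ vanishes for $g>0$. In genus zero it equals $1$ exactly when $\gamma=[C_{i}]+[C_{i+1}]+\dots+[C_{j}]$ is a connected sum of consecutive curves in the chain containing at least one $(-1,-1)$ curve, and it equals $-2$ for the $(-2)$-curve classes in the $C_{1},\dots,C_{N}$ subchain together with their connected sums, where the strip has an $A_{N}$-type ruled surface over the compact base $\PP^{1}$. We must be careful here, because $B\cong\CC$ is non-compact: the fibers $C_{i}$ move in ruled surfaces over $\CC$, so their contributions are noncompactly supported. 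We therefore take the convention that the relevant classes $v\in\LambdaWeightAN$ enter only as the shifts appearing in Theorem~\ref{thm: main structure thm of the GV invariants 2}.

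Second, we compute $\FM^{-1}(d[\PP^{1}])$. Using the derived McKay correspondence of \S\ref{subsection: Derived McKay and Ktheory}, we expect $\FM^{-1}([\PP^{1}])=[C_{0}]+u$ for an explicit $u\in\LambdaWeightAN\otimes\QQ$. The orthogonality to $K(\BB)$ built into the definition of $N_{1}(\XX)$ forces $u$ to be a rational combination of the $\alpha_{i}$, and we anticipate $u=-\omega_{1}$ (minus the first fundamental weight), up to sign conventions. Then the right-hand side of (\ref{eqn: GV invariants of XX and Y 2}) becomes
\[
\sum_{v\in\LambdaWeightAN} n^{Y}_{g}\big(d[C_{0}]+du+v\big)\,\bm{w}^{v}.
\]
Only classes with $C_{0}$-degree $d$ can contribute. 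In the strip, a class containing $C_{0}$ with multiplicity $d\geq 2$ has vanishing GV invariants, so only $d=1$ survives. For $d=1$, the nonzero terms are $g=0$ and $\gamma=C_{0}+C_{1}+\dots+C_{j}$ for $0\le j\le N$. Writing $C_{1}+\dots+C_{j}=\alpha_{1}+\dots+\alpha_{j}$ in root coordinates, the exponents $v=\alpha_{1}+\dots+\alpha_{j}-u$ should run exactly over the $N+1$ weights $\epsilon_{1},\dots,\epsilon_{N+1}$ of the standard representation, which form the Weyl orbit of $\omega_{1}$. The sum is therefore $\Phi_{1}(\bm{w})$, and reading off coefficients in the basis $\prod\Phi_{i}^{m_{i}}$ gives the claim.

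The main obstacle is the second step: pinning down $\FM^{-1}([\PP^{1}])$ precisely, including the sign and the shift by the fundamental weight. We also need to confirm that the chain attaches to $C_{0}$ at an end, so that the resulting weights form the orbit of $\omega_{1}$ rather than that of some $\omega_{k}$. We would check this by computing $\chi$ and the Euler pairings of $\OO_{C_{0}}$-type sheaves with $\OO_{C_{i}}(-1)$ and with $\OO_{b}\otimes R_{\rho}$ under the McKay correspondence. As a consistency check, the total invariant from Corollary~\ref{cor: unrefined PT/GV specialization 2} should be the quantum dimension of the standard representation at $\zeta=e^{2\pi i/(N+2)}$, which is $[N+1]_{\zeta}=1$. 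This matches the expectation that there is a single rigid orbifold line.
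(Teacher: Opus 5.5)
Your route differs from the paper's. The paper stays on the orbifold side. It assembles the Johnson--Pandharipande--Tseng invariants into $\F^{\GW}_{\XX}$, shows $[A_d(\bm x)]^{\sharp_d}=\sigma_1(\bm z^d)$ under the substitution~\eqref{eqn: CR change of variables}, and reads off $N^{\GW}_{[\PP^1],0}=\sigma_1(\bm z)=\Phi_1(\bm w)$ from Theorem~\ref{thm: Orb GV Multiple Cover}. That argument therefore leans on the toric GW crepant resolution conjecture and on MNOP. Going instead through Theorem~\ref{thm: main structure thm of the GV invariants 2} and the strip $Y$ is unconditional, and most of your picture is right:
\begin{itemize}
\item $Y$ is the chain $C_0,C_1,\dots,C_N$, with $C_0$ the only $(-1,-1)$ curve.
\item $n^Y_g=0$ for $g>0$, and also for classes of $C_0$-degree $\geq 2$.
\item $n^Y_0=1$ for $C_0$ plus a consecutive string of exceptional curves starting next to $C_0$.
\end{itemize}
One small correction: purely exceptional positive roots have $n^Y_0=-1$, not $-2$, as $\ZZZ_{\exc}=\prod_{v\in R^+}M(\bm w^{v},-y)$ shows. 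This is harmless, since those classes never enter.

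The gap is the step you flagged, and the shift you anticipate there is wrong. Nothing needs to be guessed. By Definition~\ref{defn: non-exceptional curve class defn} and the argument of Lemma~\ref{lemma: Euler pairing lemma}, $\FM^{-1}([\PP^1])=[\OO_{C_0}(-1)]+u$ with $\langle u,\alpha_i\rangle=\deg\OO_{C_0}(D_i)$. Hence $u=+\omega_k$, where $D_k$ is the exceptional divisor meeting $C_0$.

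Your choice $u=-\omega_1$ does not even give a Weyl-invariant sum: for $N=1$ it yields $w^{1/2}+w^{3/2}$. More importantly, in your own labelling $C_0$ meets $C_1$, so $k=1$. The exponents are then $\alpha_1+\dots+\alpha_j-\omega_1=-\epsilon_{j+1}$, writing $\epsilon_i$ for the weights of the standard representation as you do. These form the Weyl orbit of $\omega_N$, so the sum is $\Phi_N$ and you would conclude $\bm m=(0,\dots,0,1)$. For $N\geq 2$, no shift $u$ turns $\{\alpha_1+\dots+\alpha_j-u\}_{0\le j\le N}$ into $\{\epsilon_1,\dots,\epsilon_{N+1}\}$. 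In general the $j=0$ term $\bm w^{-\omega_k}$ forces the answer to be $\Phi_{N+1-k}$.

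So the question is not whether $C_0$ attaches at an end of the chain (it does), but at which end. To obtain $\bm m=(1,0,\dots,0)$ you must show $\deg\OO_{C_0}(D_N)=1$ in the labelling $[\OO_{C_i}(-1)]\mapsto-[\OO_b\otimes R_i]$ of Lemma~\ref{lemma: FM with N_exc and N_0}. Concretely, determine which character of $\mu_{N+1}$ acts along the zero section and which along the $\OO(-p_0)$ fiber at $p_0$, and follow this through the McKay correspondence. Without that bookkeeping your argument proves the proposition only up to the diagram automorphism $i\mapsto N+1-i$. The paper's GW computation avoids the issue, because the labelling is built into~\eqref{eqn: CR change of variables} and the output is $\sigma_1(\bm z)$ on the nose.
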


We will prove this proposition in Section~\ref{subsection: local teardrop} using the localization computations of \cite{Johnson-Pandharipande-Tseng-localPab}. As we will explain, for this example we perform the computation using orbifold Gromov-Witten theory, but one can arrive at the same result using the orbifold vertex technology of Bryan-Cadman-Young \cite{BryanCadmanYoung} on the PT side.

\subsection{GV Invariants via Gromov-Witten Theory}

An alternative approach to studying curves in a given target space is
through Gromov-Witten (GW) theory, where one considers certain stable
maps from curves to that space. A lot is known about GW theory of an
ordinary variety and the GW theory of an orbifold has been studied for
over $20$ years now, with the foundations established in
\cite{Abramovich-Graber-Vistoli-2008,OrbiGW-Abramovich-Vistoli,Chen-Ruan}.

In Section \ref{sec: orbifold GW section} we will formulate orbifold
GW theory for an orbifold CY3 $\XX$ of $A_{N}$-type, as well as the
crepant resolution conjecture in GW theory \cite{BryanGraber,Chen-Ruan}. In
particular, on account of the non-compactness, we fix a
$\CC^{*}$-action on $\XX$ and apply the localization of
Graber-Pandharipande \cite{GraberPandharipande} in order to define the
GW invariants. Let $\F^{\GW}_{\XX}(Q, \lambda, \bm{x})$ be the
resulting GW potential, and let $\bm{x} = (x_{1}, \ldots, x_{N})$ be
formal variables tracking twisted sector insertions, see Definition
\ref{defn: full orbi GW potential}. We will prove the following (which
is Corollary \ref{cor: Main GW corollary} in the main body of the
text):

\begin{introtheorem}\label{thm: intro version of GW/GV formula}
Let $\XX$ be a local orbifold CY3 of $A_{N}$-type which is one of the following forms:
\begin{enumerate}[(i)]
\item a local orbifold $K3$ surface (with the fiberwise $\CC^{*}$-action),
\item a local orbifold surface $\Sorb$ with a $\CC^{*}$-action induced
from a $\CC^{*}$-action on $\Sorb$,
\item a local orbifold curve (with the anti-diagonal fiberwise $\CC^{*}$-action).
\end{enumerate}
Assume that the GW Crepant Resolution Conjecture holds for $\XX$. Then
the total GV invariants, as defined via PT theory, are related to the
orbifold GW invariants of $\XX$ by
the formula
\begin{equation}\label{eqn: Main intro GW/GV equation}
\F^{\GW}_{\XX}(Q,\lambda ,\bm{x})\Big\rvert_{x_{j} = \frac{\pi}{N+2}\csc
\left(\frac{j\pi}{N+1} \right)} = (-1)^{\epsilon} \sum_{\beta \neq 0}\, \sum_{g \geq 0}\, \sum_{d
\geq 1}\, n^{\XX}_{g}(\beta) \, \left( 2 \sin \tfrac{d \lambda}{2} \right)^{2g-2}\,
\frac{Q^{d \beta}}{d}
\end{equation}
where $\epsilon = 1$ for case (i)
and $\epsilon =0$ for cases (ii) and (iii).
\end{introtheorem}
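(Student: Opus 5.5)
The argument passes to the crepant resolution $\pi: Y \to X$, where the GW/GV correspondence for smooth CY3s is available, and then returns to $\XX$ through the Crepant Resolution Conjecture. The specialization $x_j = \frac{\pi}{N+2}\csc\left(\frac{j\pi}{N+1}\right)$ is designed to match the root-of-unity specialization $w_k=\zeta$ of Corollary~\ref{cor: unrefined PT/GV specialization 2}.

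\textbf{Step 1 (resolution side).} In each of the cases (i)--(iii), $Y$ carries the induced $\CC^*$-action, so its GW potential is defined by localization. I would invoke the GW/PT correspondence for $Y$ (Pardon's proof of MNOP, or the known equivariant/toric or $K3$ results in these cases) together with the GV formula (\ref{eqn: Ordinary GV/PT formula}). Under $y=-e^{i\lambda}$ this gives
\[
\F^{\GW}_{Y}(q,\lambda)=\sum_{\gamma\neq 0}\sum_{g\ge0}\sum_{d\ge1} n^Y_g(\gamma)\left(2\sin\tfrac{d\lambda}{2}\right)^{2g-2}\frac{q^{d\gamma}}{d}.
\]
In case (i) the reduced/fiberwise theory introduces the global sign, which is the source of $(-1)^\epsilon$ and is consistent with the sign convention defining $\Snrefined$.

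\textbf{Step 2 (crepant resolution).} Write $\gamma=\FM^{-1}(\beta)+v$ with $v\in\Lambda_{A_N}$ the exceptional part. The CRC asserts that $\F^{\GW}_\XX(Q,\lambda,\bm x)$ is obtained from $\F^{\GW}_Y$ by analytic continuation in the exceptional variables $q^{\alpha_i}$ and then the substitution $q^{\alpha_k}\mapsto$ a root of unity. The linear change of variables sends the twisted insertion variables $x_j$ to the divisor variables, of the form $x_j\leftrightarrow \sum_k \frac{\omega^{jk}}{\ldots}(t_k-\ldots)$, as in Bryan--Graber and the $A_N$ computations of Coates et al. With the identification $q^{v}=\bm w^{v}$, I would check that the value $x_j=\frac{\pi}{N+2}\csc\left(\frac{j\pi}{N+1}\right)$ corresponds exactly to $t_k=0$ together with $q^{\alpha_k}=\zeta$, i.e.\ $w_k=\zeta$. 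Concretely, this amounts to showing that the inverse discrete Fourier transform of the CRC change of variables maps the point $w_k=\zeta$ for all $k$ to that cosecant vector. This is a finite trigonometric-sum identity.

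\textbf{Step 3 (assembling the sum).} Multiple-cover sums in exceptional directions are infinite and only converge after analytic continuation, so it has to be checked that substituting and summing commute. Theorem~\ref{thm: main structure thm of the GV invariants 2} handles this. For fixed $\beta$ and $g$, only finitely many $v$ have $n^Y_g(\FM^{-1}(\beta)+v)\neq0$. For $\FM^{-1}(\beta)=0$ the purely exceptional contributions are the known $A_N$ fiber-class GV invariants; after continuation and specialization they must match the unstable/degree-zero terms of $\F_\XX$ and hence drop out of the sum over $\beta\neq0$. Once the finiteness is in place, setting $q^{\alpha_k}=\zeta$ turns $\sum_v n^Y_g(\FM^{-1}(\beta)+v)\zeta^{l(v)}$ into $n^\XX_g(\beta)$ by Corollary~\ref{cor: unrefined PT/GV specialization 2}, which yields (\ref{eqn: Main intro GW/GV equation}).

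\textbf{Main obstacle.} I expect Step 2 to be the hardest: the precise CRC change of variables, and verifying the cosecant identity. A related subtlety is that the continuation can produce extra genus-$0$ $\lambda$-independent terms, and their cancellation must be checked against the Step 3 analysis of the exceptional contributions.
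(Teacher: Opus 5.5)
Your route is the paper's: MNOP on $Y$, then the CRC, then the check that the cosecant point maps to $z_k=-\zeta^{-k}$, where $\sigma_i(\bm z)=1$. In the paper the CRC is used in the form $\F^{\GW}_{\XX}(Q,\lambda,\bm x)=\F^{\GW}_{Y,\nexc}(Q,\lambda,\bm w)$, so your worry about exceptional and unstable terms does not arise. The cosecant check is the identity $\sum_{j=1}^{N}\omega^{jk}/(\omega^j-1)=\tfrac{N}{2}-k+1$.

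\textbf{The gap in Step 3.} This is the same step that the paper's proof of Corollary~\ref{cor: Main GW corollary} dismisses with ``it is clear''. The $d$-fold multiple-cover term carries $\bm w^{dv}$, not $\bm w^{v}$. By Theorem~\ref{thm: Orb GV Multiple Cover}, its coefficient is
\[
\Psi_d(N^{\GW}_{\beta,g})(\bm z)=\sum_{\bm m}\nrefined\prod_i\sigma_i(z_1^d,\dots,z_{N+1}^d)^{m_i},
\]
while the substitution $\bm x\mapsto\bm z$ is fixed and does not depend on $d$. At $z_k=-\zeta^{-k}$ one has $z_k^d=(-1)^d\zeta^{-kd}$. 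The same root-of-unity argument as in Corollary~\ref{cor: unrefined PT/GV specialization 2} shows that $\sigma_i(\bm z^d)=1$ for all $i$ if and only if $\gcd(d,2(N+2))=1$. For example, $\sigma_1(\bm z^2)=\sum_{k=1}^{N+1}\zeta^{-2k}=-1$. In your notation, the $d$-th term gives $\sum_v\zeta^{d\,l(v)}n^Y_g(\FM^{-1}(\beta)+v)$, not $n^{\XX}_g(\beta)$. No other special point can fix this. Any $\bm z$ with all $\sigma_i(\bm z)=1$ has $z_k^{N+2}=(-1)^N$ for every $k$, so $\sigma_1(\bm z^{N+2})=(-1)^N(N+1)\neq 1$.

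\textbf{The displayed identity itself fails.} Take the local teardrop with $N=1$, which is case (iii) and toric. By Proposition~\ref{prop: Multiple Cover Formula for Local Teardrop}, the only nonzero total invariant is $n^{\XX}_0([\PP^1])=1$. So the right-hand side has $Q^2$-coefficient $+\tfrac12(2\sin\lambda)^{-2}$. On the other hand, \eqref{eqn: Pandharipande-Johnson-Tseng} gives the $Q^2$-coefficient of $\F^{\GW}_{\XX}$ as $-\cos(x)\,(2\sin\lambda)^{-2}$. At $x=\pi/3$ this is $-\tfrac12(2\sin\lambda)^{-2}$.

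\textbf{What is actually proved.} Your argument, like the paper's, proves the formula with $n^{\XX}_g(\beta)$ in the $d$-th term replaced by $\Psi_d(N^{\GW}_{\beta,g})$ evaluated at $z_k=-\zeta^{-k}$. This agrees with the stated formula exactly for the terms with $\gcd(d,2(N+2))=1$. In particular it holds for $d=1$, which is all the primitive genus-$0$ corollary uses.

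\textbf{Smaller points.} In Step 1, for cases (ii) and (iii), you also need that the $\CC^*$-localized PT invariants of $Y$ equal the Behrend-weighted ones used to define $n^Y_g$. The paper proves this for Calabi--Yau torus actions using Li--Qin and Behrend. In case (i), the sign $(-1)^{\epsilon}$ is the Maulik--Thomas sign. Finally, $\bm x=0$ corresponds to $w_k=\omega$, not $w_k=\zeta$. The cosecant point is a nonzero shift in $\bm x$, not ``$t_k=0$ with $q^{\alpha_k}=\zeta$''.
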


The GW theory of $\XX$ encodes the $\bm{m}$-graded GV invariants more
generally (see Theorem \ref{thm: Orb GV Multiple Cover}), but we want
to emphasize that Equation~\eqref{eqn: Main intro GW/GV equation} is
structurally identical to the original Gopakumar-Vafa formula
\cite{Gopakumar-Vafa} after setting the twisted sector variables
$\bm{x}$ to the curious special values $x_{j}= \frac{\pi}{N+2}\csc
\left(\frac{j\pi}{N+1} \right)$.


For the reader's convenience, we include the following table
summarizing notation:

\par\vfill
\newpage
\input{notation-table}


\section{Gopakumar-Vafa Invariants Through Orbifold PT
Theory}\label{section: GV invariants through PT theory}

Let us begin by developing our definition of the Gopakumar-Vafa
invariants of orbifolds by way of Pandharipande-Thomas (PT)
theory. For the reader's convenience we note that many of the early
sections are meant to lay the groundwork ultimately leading to our
main results and constructions in Sections \ref{subsection: PT Theory
of Y and Weyl Invariance}--\ref{sec: The GV invariants of XX}.

\subsection{The $A_{N}$ Root System and Representation
Theory}\label{sec: root theory section} The theory of sheaves on
orbifolds has many close connections to root systems and
representation theory. We collect here all of the basic results we
make use of later.
\begin{definition}\label{defn: defn of AN root lattice}
The $A_{N} $ root lattice is the sublattice
of $\ZZ^{N+1}$ given by:
\[
\Lambda_{A_{N}} = \bigg\{ \sum_{i=1}^{N+1} c_{i} e_{i} \in \ZZ^{N+1}
\,\, \bigg| \,\, \sum_{i=1}^{N+1} c_{i} = 0 \bigg\}  
\]
where $\{e_{1}, \ldots, e_{N+1}\}$ is a basis of $\ZZ^{N+1}$ with
bilinear form $\langle e_{i}, e_{j} \rangle = \delta_{ij}$ extended to $\Lambda_{A_{N}}$ by linearity. 
\end{definition}

It is well-known that in this case the weight lattice
$\LambdaWeightAN$ is the dual of the root lattice, and we have 
\[
\Lambda_{A_{N}} \subseteq \LambdaWeightAN \subseteq \tfrac{1}{N+1} \Lambda_{A_{N}}. 
\]
The simple roots of the $A_{N}$ root system are given by:
\begin{equation}\label{eqn: AN simple roots}
\alpha_{k} = e_{k} - e_{k+1}, \quad \quad  k=1, \ldots, N.
\end{equation}
They form a $\ZZ$-basis of $\Lambda_{A_{N}}$ and a $\QQ$-basis of
$\LambdaWeightAN$. The bilinear form on the root lattice allows us to
compute the Cartan pairing of $A_{N}$ to be
\begin{equation}\label{eqn: Cartan pairing}
\langle \alpha_i,\alpha_j \rangle=
\begin{cases}
2 & \quad i=j,\\
-1 & \quad |i-j|=1,\\
0 &  \quad |i-j|\ge 2.
\end{cases}
\end{equation}

We introduce formal variables $\bm{w} = (w_{1}, \ldots, w_{N})$ and
associate to $v=\sum_{i}v_{i}\alpha_{i} $ the monomial 
\[
\bm{w}^{v} \coloneqq w_{1}^{v_{1}} \cdots w_{N}^{v_{N}} .
\]
Note that for $v\in \LambdaWeightAN$, $v_{i}\in
\tfrac{1}{N+1}\ZZ$. This leads us to the following definition:
\begin{definition}\label{defn: group ring definition}
The group ring of the $A_{N}$ weight lattice is the ring defined by
\[
\ZZ [ \LambdaWeightAN ] = \bigoplus_{v \in \LambdaWeightAN} \ZZ \bm{w}^{v}, \quad\quad\quad \bm{w}^{v_{1}} \bm{w}^{v_{2}} = \bm{w}^{v_{1} + v_{2}}.
\]
Consequently, an element of $\ZZ [ \LambdaWeightAN ]$ is a finite integer linear combination of monomials $\bm{w}^{v}$ for $v \in \LambdaWeightAN$ and the ring operations are sums and products of such expressions. 
\end{definition}

Let $W \cong S_{N+1}$ be the Weyl group of the $A_{N}$ root lattice. The Weyl group is generated by the reflections $s_{1}, \ldots, s_{N}$ about the simple roots $\alpha_{1}, \ldots, \alpha_{N}$, respectively, and induces an action on the weight lattice $\LambdaWeightAN$. We can equivalently formulate this as an action on the group ring $\ZZ[\LambdaWeightAN]$, with the reflections acting on the formal variables $\bm{w}$ as:
\begin{equation}\label{eqn: simple root reflections}
\begin{split}
& s_{1}(\bm{w}) = (w_{1}^{-1}, w_{1}w_{2}, w_{3}, \ldots, w_{N}) \\
& s_{j}(\bm{w}) = (w_{1}, \ldots, w_{j-1}w_{j}, w_{j}^{-1}, w_{j}w_{j+1}, \ldots, w_{N}) \quad j=2, \ldots, N-1 \\
& s_{N}(\bm{w}) = (w_{1}, \ldots, w_{N-1}w_{N}, w_{N}^{-1})
\end{split}
\end{equation}

Let $\ZZ[\LambdaWeightAN]^{W} \subseteq \ZZ[\LambdaWeightAN]$ denote
the subring of elements of the group ring invariant under the action
of the Weyl group. The simple roots uniquely determine a set of
fundamental weights $\{\omega_{1}, \ldots, \omega_{N}\}$ which form a
$\ZZ$-basis of $\LambdaWeightAN$. Associated to each $\omega_{i}$ we
can explicitly construct a Weyl-invariant element in the group ring: 
\begin{equation*}
\Phi_{i}(\bm{w}) = \sum_{v \in W(\omega_{i})} \bm{w}^{v} \in \ZZ[\LambdaWeightAN]^{W}
\end{equation*}
where $W(\omega_{i})$ denotes the Weyl orbit of the fundamental weight $\omega_{i}$. 

\begin{theorem}[{\cite[Theorem 23.24]{FultonHarris}}]
\label{thm: crucial Fulton-Harris thm on Weyl invariants}
The ring $\ZZ[\LambdaWeightAN]^{W}$ is a polynomial ring generated by the Weyl-invariant elements $\Phi_{i}(\bm{w})$:
\begin{equation}\label{eqn: Weyl invariant ring and the Phi's}
\ZZ[\LambdaWeightAN]^{W} \isom \ZZ[\Phi_{1}(\bm{w}), \ldots, \Phi_{N}(\bm{w})].
\end{equation}
\end{theorem}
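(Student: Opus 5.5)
We prove that $\ZZ[\LambdaWeightAN]^{W}$ is a polynomial ring on $\Phi_{1}(\bm{w}),\ldots,\Phi_{N}(\bm{w})$ by the standard dominance-order triangularity argument. Everything reduces to a leading-term computation in the dominant cone, together with algebraic independence.

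\textbf{Step 1 (orbit-sum basis).} For each dominant weight $\lambda=\sum a_i\omega_i$ with $a_i\in\ZZ_{\geq 0}$, set $m_\lambda=\sum_{v\in W(\lambda)}\bm{w}^{v}$. Every $W$-orbit in $\LambdaWeightAN$ meets the dominant chamber in exactly one point. Hence the $m_\lambda$ form a $\ZZ$-basis of $\ZZ[\LambdaWeightAN]^{W}$: an invariant element has coefficients that are constant on orbits, and it is a finite sum. Note also that $\Phi_i=m_{\omega_i}$.

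\textbf{Step 2 (triangularity).} Define the partial order $\mu\preceq\lambda$ if $\lambda-\mu$ is a non-negative integer combination of the simple roots $\alpha_k$. Set $\Phi^{\bm{a}}=\prod_i\Phi_i^{a_i}$. We claim
\[
\Phi^{\bm{a}} = m_\lambda + \sum_{\mu\prec\lambda,\ \mu \text{ dominant}} c_{\lambda\mu}\, m_\mu, \qquad c_{\lambda\mu}\in\ZZ .
\]
Expanding the product, each monomial has the form $\bm{w}^{\sum_j u_j}$, with each $u_j$ lying in the Weyl orbit of some $\omega_i$. The key fact is that every weight $u$ in $W(\omega_i)$ satisfies $u\preceq\omega_i$. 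This holds because, for any dominant $\nu$, every element of $W(\nu)$ is obtained from $\nu$ by successive reflections $s_k$, and each reflection subtracts a non-negative multiple of $\alpha_k$ when applied in the appropriate order. (This is the standard fact that $\nu - w\nu\in\sum\ZZ_{\geq0}\alpha_k$ for dominant $\nu$, proved by induction on the length of $w$.) It follows that every monomial exponent is $\preceq\lambda$.

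Moreover, the exponent $\lambda$ itself arises only from choosing $u_j=\omega_{i}$ in every factor. Indeed, if some $u_j\prec\omega_i$, the total is strictly smaller, because the order is additive. So the coefficient of $\bm{w}^{\lambda}$ is $1$. Since the product is $W$-invariant, Step 1 gives the claimed expansion.

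\textbf{Step 3 (induction).} For dominant $\lambda$, the set $\{\mu \text{ dominant}:\mu\preceq\lambda\}$ is finite: it lies in the finite set of weights in the convex hull of $W(\lambda)$ intersected with the lattice. Hence the order is well-founded on dominant weights, and induction along it shows that each $m_\lambda$ is an integer polynomial in the $\Phi_i$. This gives surjectivity of $\ZZ[\Phi_1,\ldots,\Phi_N]\to\ZZ[\LambdaWeightAN]^{W}$. The inversion stays over $\ZZ$ because the leading coefficient is $1$.

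\textbf{Step 4 (independence).} The map $\bm{a}\mapsto\lambda=\sum a_i\omega_i$ is a bijection from $\ZZ_{\geq0}^N$ onto the dominant weights. Therefore the $\Phi^{\bm{a}}$ are unitriangular with respect to the basis $\{m_\lambda\}$, so they are linearly independent over $\ZZ$. This is exactly algebraic independence of $\Phi_1,\ldots,\Phi_N$.

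The main obstacle is the key fact in Step 2 that $W(\nu)\subset\nu-\sum\ZZ_{\geq0}\alpha_k$ for dominant $\nu$. For type $A_N$, this can alternatively be checked concretely. Here $W(\omega_i)$ consists of the vectors $e_{j_1}+\cdots+e_{j_i}$ minus the constant $\tfrac{i}{N+1}\sum_k e_k$. The difference from $\omega_i=e_1+\cdots+e_i-\tfrac{i}{N+1}\sum e_k$ has non-negative partial sums, which is precisely the condition of being a non-negative combination of $e_k-e_{k+1}$.
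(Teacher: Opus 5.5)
Your argument is correct: it is the standard dominance-order unitriangularity proof, and each step goes through. In Step 2 you only need $u\preceq\omega_i$ for $u\in W(\omega_i)$, and your explicit partial-sum check for type $A_N$ already gives that.

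The paper itself gives no proof; it simply cites Fulton--Harris, Theorem 23.24. That theorem is stated for the characters of the fundamental representations $\Gamma_{\omega_i}$ rather than for orbit sums. Its proof uses the same triangularity mechanism, but obtains it from highest weight theory: a product of the $\Gamma_{\omega_i}$ contains $\Gamma_\lambda$ once, plus representations of lower highest weight. This yields that the representation ring is polynomial on the $\Gamma_{\omega_i}$ and that $\Char$ is an isomorphism onto $\ZZ[\LambdaWeightAN]^{W}$.

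The paper defines $\Phi_i$ as orbit sums, so applying that citation needs one extra input. The fundamental weights of $A_N$ must be minuscule, so that $\Phi_i=\Char(V_i)$; this is the paper's Remark, or equivalently $\Phi_i=\sigma_i(\bm z)$.

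Your route works with orbit sums directly. It therefore needs no representation theory and no minuscule identification, and it applies verbatim to the weight lattice of any root system. The Fulton--Harris route additionally identifies the invariant ring with the representation ring.

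One small point concerns Step 3. You justify finiteness of $\{\mu \text{ dominant}:\mu\preceq\lambda\}$ by saying such $\mu$ lie in $\mathrm{conv}(W\lambda)$. That is true, but it is itself a lemma of comparable depth to the finiteness you want.

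A self-contained replacement is the identity $\langle\lambda,\lambda\rangle-\langle\mu,\mu\rangle=\langle\lambda+\mu,\lambda-\mu\rangle$. The right side is $\geq 0$ because $\lambda+\mu$ is dominant and $\lambda-\mu$ is a non-negative combination of simple roots. So every such $\mu$ has norm at most that of $\lambda$, and only finitely many lattice points satisfy this. Alternatively, the entries of the inverse Cartan matrix are non-negative, which bounds the coefficients of $\lambda-\mu$.
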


\begin{remark}
Under the standard ADE classification, $\SL_{N+1}(\CC)$ is the complex Lie group associated to the $A_{N}$ root system. If we let $V_{1}, \ldots, V_{N}$ denote the
irreducible representations of $\SL_{N+1}(\CC)$ with highest weight
$\omega_{1}, \ldots, \omega_{N}$, respectively, then the Weyl-invariant basis elements can be equivalently written as
\[
\Phi_{i}(\bm{w}) = \Char(V_{i}) \in \ZZ[\LambdaWeightAN]^{W}
\]
where $\Char : \Rep(\SL_{N+1}(\CC)) \to \ZZ[\LambdaWeightAN]$ is the character
homomorphism.
\end{remark}

Define formal variables $\bm{z}
= (z_{1}, \ldots, z_{N+1})$ satisfying $z_{1} \cdots z_{N+1} = 1$ such that
\[
w_{k} = z_{k} z_{k+1}^{-1}, \quad \quad k=1, \ldots, N.
\]

\begin{lemma}[{\cite[Eqn. 23.26]{FultonHarris}}]
\label{lemma: AN change of vars Weyl-invariance}
With the above change of variables the Weyl-invariant basis elements for the $A_{N}$ root lattice satisfy
\[
\Phi_{i}(\bm{w}) = \sigma_{i}(\bm{z}) 
\]
where $\sigma_{i}(\bm{z})$ is the $i$-th elementary symmetric function of $N+1$ variables, defined through the expansion
\begin{equation}\label{eqn: Defn of Elem Symm Funcs}
\sum_{i=0}^{N+1} \sigma_{i}(\bm{z}) t^{i} = \prod_{k=1}^{N+1} (1+z_{k}t).
\end{equation}
\end{lemma}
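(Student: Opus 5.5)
The plan is to set up the standard identification between the weight lattice of $A_{N}$ and characters of the diagonal torus of $\SL_{N+1}(\CC)$, and then compute the Weyl orbit of each fundamental weight explicitly.

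First, identify $\LambdaWeightAN$ with the quotient $\ZZ^{N+1}/\ZZ(e_{1}+\cdots+e_{N+1})$. Equivalently, it is the orthogonal projection of $\ZZ^{N+1}$ onto the hyperplane $\sum c_{i}=0$. Under this identification the monomial attached to (the class of) $e_{k}$ is $z_{k}$, and the constraint $z_{1}\cdots z_{N+1}=1$ records that $e_{1}+\cdots+e_{N+1}$ maps to zero.

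We check this is consistent with the change of variables. The simple root $\alpha_{k}=e_{k}-e_{k+1}$ is sent to $z_{k}z_{k+1}^{-1}=w_{k}$, as required. More generally, for $v=\sum v_{i}\alpha_{i}$ with fractional coefficients in $\tfrac{1}{N+1}\ZZ$, the monomial $\bm{w}^{v}$ equals $\prod_{k} z_{k}^{c_{k}}$, where $v=\sum c_{k}e_{k}$ in the projected coordinates. This monomial is well defined precisely because $\prod z_{k}=1$, so the assignment extends to an isomorphism of group rings.

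Second, write down the fundamental weights concretely. The weight $\omega_{i}$ is the image of $e_{1}+\cdots+e_{i}$; projected onto the hyperplane, this is $e_{1}+\cdots+e_{i}-\tfrac{i}{N+1}(e_{1}+\cdots+e_{N+1})$. One verifies the defining property $\langle \omega_{i},\alpha_{j}\rangle=\delta_{ij}$ directly from $\langle e_{a},e_{b}\rangle=\delta_{ab}$. Thus $\bm{w}^{\omega_{i}}=z_{1}\cdots z_{i}$.

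Third, compute the Weyl orbit. The Weyl group $W\cong S_{N+1}$ acts on $\ZZ^{N+1}$ by permuting the $e_{k}$: the reflection $s_{j}$ swaps $e_{j}$ and $e_{j+1}$. One checks this agrees with the formulas \eqref{eqn: simple root reflections} after substituting $w_{k}=z_{k}z_{k+1}^{-1}$; for example, $s_{j}$ sends $w_{j}\mapsto w_{j}^{-1}$ and $w_{j-1}\mapsto w_{j-1}w_{j}$. The orbit of $e_{1}+\cdots+e_{i}$ is the set of all sums $e_{k_{1}}+\cdots+e_{k_{i}}$ with distinct indices $k_{1}<\cdots<k_{i}$, and these remain distinct modulo $e_{1}+\cdots+e_{N+1}$ for $1\le i\le N$. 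Therefore
\[
\Phi_{i}(\bm{w})=\sum_{k_{1}<\cdots<k_{i}} z_{k_{1}}\cdots z_{k_{i}}=\sigma_{i}(\bm{z}).
\]

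The only step requiring any care is the well-definedness of the first step, namely that fractional exponents of the $w_{k}$ correspond to honest integer monomials in the $z_{k}$ modulo $\prod z_{k}=1$. This follows because the change of basis between $\{\omega_{i}\}$ and $\{e_{k}\}$ is integral in the $\omega$-to-$z$ direction.
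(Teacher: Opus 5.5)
Your proposal is correct. The paper gives no argument of its own here, only the citation to Fulton--Harris, and your proof is essentially the standard one behind that reference. That argument identifies $\ZZ[\LambdaWeightAN]$ with $\ZZ[z_1^{\pm1},\ldots,z_{N+1}^{\pm1}]/(z_1\cdots z_{N+1}-1)$, so that $\omega_i\mapsto z_1\cdots z_i$ and $W\cong S_{N+1}$ permutes the $z_k$. Fulton--Harris phrase the conclusion as $\sigma_i(\bm{z})=\Char(\Lambda^i\CC^{N+1})$, matching the paper's remark that $\Phi_i=\Char(V_i)$. This agrees with your direct orbit-sum computation because the weights of $\Lambda^i\CC^{N+1}$ are exactly the Weyl orbit of $\omega_i$, each with multiplicity one.
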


We provide here examples of the $\Phi_{i}(\bm{w})$ for small values of $N$:

\begin{itemize}

\item For $A_{1}$:
\[
\Phi_{1}(w) = w^{\frac{1}{2}} + w^{-\frac{1}{2}}
\]
\item For $A_{2}$:
\begin{equation*}
\begin{split}
& \Phi_{1}(w_{1}, w_{2}) = w_{1}^{-\frac{1}{3}}w_{2}^{-\frac{2}{3}}(1 + w_{2} + w_{1}w_{2})   \\[4pt]
& \Phi_{2}(w_{1}, w_{2}) = w_{1}^{-\frac{2}{3}}w_{2}^{-\frac{1}{3}}(1 + w_{1} + w_{1}w_{2})
\end{split}
\end{equation*}

\item For $A_{3}$:
\begin{equation*}
\begin{split}
& \Phi_{1}(w_{1}, w_{2}, w_{3}) =  w_{1}^{-\frac{3}{4}}w_{2}^{-\frac{2}{4}}w_{3}^{-\frac{1}{4}}(1 + w_{1} + w_{1}w_{2} + w_{1}w_{2}w_{3}) \\[4pt]
& \Phi_{2}(w_{1}, w_{2}, w_{3}) =  w_{1}^{-\frac{1}{2}}w_{2}^{-1}w_{3}^{-\frac{1}{2}}(1 + w_{2} + w_{2}w_{3}+w_{1}w_{2} + w_{1}w_{2}w_{3} + w_{1} w_{2}^{2}w_{3})  \\[4pt]
& \Phi_{3}(w_{1}, w_{2}, w_{3}) =  w_{1}^{-\frac{1}{4}}w_{2}^{-\frac{2}{4}}w_{3}^{-\frac{3}{4}}(1 + w_{3} + w_{2}w_{3} + w_{1}w_{2}w_{3})
\end{split}
\end{equation*}
\end{itemize}

We now turn to a proof of Corollary \ref{cor: unrefined PT/GV
specialization 2} from the Introduction.
\begin{proof}[Proof of Corollary \ref{cor: unrefined PT/GV
specialization 2}] We assume that Theorem~\ref{thm: main structure thm
of the GV invariants 2} holds. Then by Equation~\eqref{eqn: GV
invariants of XX and Y 2}, we need to show that
\[
\Phi_{i}(\bm{w})|_{w_{k}=\zeta} =1
\]
where $\zeta =\exp\left(\tfrac{2\pi i}{N+2} \right)$.

Under the change of variables $w_{k} = z_{k} z_{k+1}^{-1}$ for $k=1,
\ldots, N$, the specialization  $w_{k}=\zeta $ is induced by
\begin{equation*}
z_{k} = - \zeta^{-k}.
\end{equation*}
By Lemma \ref{lemma: AN change of vars Weyl-invariance} the above
change of variables transforms $\Phi_{i}(\bm{w})$ into the elementary
symmetric function $\sigma_{i}(\bm{z})$. So it remains to show that
making the substitution $z_{k}=-\zeta^{-k}$ results in
$\sigma_{i} = 1$. By (\ref{eqn: Defn of Elem Symm Funcs}), this is
equivalent to showing that
\[
\prod_{k=1}^{N+1}(1-\zeta^{-k}t) = 1+\dotsb +t^{N+1}
\]
which holds since $\{\zeta^{-k} \}_{k=1,\dotsc ,N+1}$ is the set of
non-trivial $(N+2)$-th roots of unity.
\end{proof}

\subsection{Preliminaries on Orbifold CY3s}\label{subsection: preliminaries on orbifolds and PT theory}

We now turn to establishing some generalities on orbifolds, $K$-theory,
and crepant resolutions. We refer the reader to
\cite{Beentjes-Calabrese-Rennemo, BryanCadmanYoung} for more details
on the background material.  

Let $\XX$ be a local orbifold curve or surface of $A_{N}$-type (see
Definition~\ref{defn: new local orbifold CY3 defn}).  Let $K(\XX )$
denote the Grothendieck group of coherent sheaves on $\XX$ and let
$K_{c}(\XX)$ denote the Grothendieck group of compactly supported
coherent sheaves on $\XX$. We define the Euler pairing
\begin{align*}
\chi : K_{c}(\XX )&\otimes K(\XX )\to \ZZ \\
E&\otimes F  \mapsto \chi (E,F) 
\end{align*}
where
\[
\chi (E,F)=  \sum_{i
\in \ZZ} (-1)^{i} \dim \Ext_{\XX}^{i}(E,F).
\]
Two classes $E_{1}, E_{2} \in K_{c}(\XX)$ are defined to be numerically equivalent 
\[
E_{1} \sim_{\num} E_{2}
\]
if $\chi(E_{1}, F) =
\chi(E_{2}, F)$ for all
$F \in K(\XX)$. Throughout this paper, the \emph{numerical
$K$-theory group} of $\XX$ is defined to be the (compactly supported)
Grothendieck group modulo numerical equivalence 
\[
N(\XX) = K_{c}(\XX)/ \sim_{\num}
\]
which is a free Abelian group of finite rank. 
\begin{definition}
Let $N_{\leq 1}(\XX) \subset N(\XX)$ be the subgroup generated by sheaves with compact support of dimension at most $1$. In addition, let $N_{0}(\XX) \subset N_{\leq 1}(\XX)$ be the subgroup generated by $0$-dimensional sheaves.
\end{definition}
\noindent We always have $[\OO_{\pt}] \in N_{0}(\XX)$ where $\OO_{\pt}$ is the structure sheaf of a generic point. There are additional classes in $N_{0}(\XX)$ coming from structure sheaves of points $p \in B$ in the orbifold locus twisted by representations of $\mu_{\scriptscriptstyle N+1}$.

\subsection{Crepant Resolutions and the Derived McKay Correspondence}\label{subsection: Derived McKay and Ktheory}

For $\XX$ a local orbifold CY3 of $A_{N}$-type, let $X$ be the corresponding coarse space, and let
\[
Y = \Hilb^{[\OO_{\pt}]}(\XX)
\]
be the Hilbert scheme parameterizing $0$-dimensional substacks of
$\XX$ with class $[\OO_{\pt}] \in N_{0}(\XX)$. By a result of
Bridgeland, King, and Reid, $Y$ is a smooth quasi-projective
Calabi-Yau threefold, and the natural proper map 
\[
\pi : Y \to X 
\]
is a crepant resolution of the coarse space \cite{Bridgeland-King-Reid, ChenTseng2008}. 

The derived McKay correspondence is a
derived equivalence between $\XX$ and $Y$ by way of the Fourier--Mukai
functor of \cite{Bridgeland-King-Reid}
\[
\FM : D(Y) \to D(\XX)
\]
which induces an isomorphism on numerical $K$-theory
\begin{equation}\label{eqn: FM on num K-theory}
\FM : N(Y) \xrightarrow{\sim} N(\XX).
\end{equation}
This isomorphism is an isometry with respect to the Euler pairing on
$\XX$ and the similarly defined Euler pairing on $Y$:
\begin{align*}
\chi : K_{c}(Y)&\otimes K(Y )\to \ZZ \\
E &\otimes F \mapsto \chi (E,F)
\end{align*}
where 
\[
\chi (E,F) =  \sum_{i \in \ZZ} (-1)^{i} \dim \Ext_{Y}^{i}(E,F).
\]

The crepant resolution contains an exceptional locus $\pi^{-1}(B)$ consisting of an $A_{N}$-configuration of $N$ ruled surfaces $D_{1}, \ldots, D_{N}$. More specifically, for all $i = 1, \ldots, N$ we have ruled surfaces
\[
\pi |_{D_{i}} : D_{i} \to B
\]
over a curve $B \cong \CC$ with generic fiber $C_{i} \isom \PP^{1}$. 
\begin{lemma}\label{lemma: Euler pairing lemma}
For all $i, j = 1, \ldots , N$ we have
\[
\chi\big(\OO_{C_{i}}(-1), \OO_{D_{j}} \big) = 
\begin{cases}
-2 & \quad i=j,\\
1 & \quad |i-j|=1,\\
0 & \quad |i-j|\ge 2
\end{cases}
\]
which is (minus) the Cartan pairing presented in (\ref{eqn: Cartan pairing}).
\end{lemma}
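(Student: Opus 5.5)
The plan is to compute the Euler pairing by local-to-global reasoning on $Y$, reducing it to intersection numbers on the ruled surface $D_j$. Since $\OO_{C_i}(-1)$ is compactly supported on the proper curve $C_i$, Riemann--Roch shows that $\chi(\OO_{C_i}(-1), \OO_{D_j})$ depends only on the restriction of the data to a neighborhood of $C_i$. I would use Serre duality on the Calabi--Yau threefold $Y$ together with the locally free resolution $0 \to \OO_Y(-D_j) \to \OO_Y \to \OO_{D_j} \to 0$. This gives
\[
\chi(\OO_{C_i}(-1), \OO_{D_j}) = \chi(\OO_{C_i}(-1), \OO_Y) - \chi(\OO_{C_i}(-1), \OO_Y(-D_j)).
\]

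Next, by Serre duality (with $K_Y \cong \OO_Y$, and both sheaves with compact support in the pairing), we have $\chi(E, L) = -\chi(L, E)$ for $E$ a compactly supported sheaf on a threefold and $L$ a line bundle, up to the sign $(-1)^3$. Moreover $\chi(L, E) = \chi(E \otimes L^{\vee})$. Hence
\[
\chi(\OO_{C_i}(-1), \OO_{D_j}) = -\chi(\OO_{C_i}(-1)) + \chi(\OO_{C_i}(-1) \otimes \OO_Y(D_j)) = \deg\big(\OO_Y(D_j)|_{C_i}\big) = D_j \cdot C_i.
\]
Here $\chi(\OO_{\PP^1}(-1)) = 0$ and $\chi(\OO_{\PP^1}(-1+a)) = a$.

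It remains to compute the intersection numbers $D_j \cdot C_i$, which is the main geometric input. For $|i-j| \geq 2$ the curve $C_i$ is disjoint from $D_j$, so the number is $0$. For $|i-j| = 1$, the adjacent ruled surfaces in the $A_N$ configuration meet along a section over $B$, and a generic fiber $C_i$ meets $D_j$ transversally in one point, giving $1$. For $i=j$, I would use that $Y$ is locally, over $B$, the product of the minimal resolution of $\CC^2/\mu_{N+1}$ with $\CC$. Then $\OO_Y(D_i)|_{C_i} \cong N_{C_i/\widetilde{\CC^2/\mu_{N+1}}}$, which is $\OO(-2)$ because $C_i$ is a $(-2)$-curve in the resolved surface; this gives $-2$. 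Since the pairing only depends on a neighborhood of a generic fiber, where this local product description holds, this step is straightforward. The only care needed is to justify the local product structure near a generic fiber, and to track signs consistently in Serre duality.
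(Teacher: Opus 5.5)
Your proposal is correct and follows the paper's proof: the same divisor sequence and Serre-duality step reduce the pairing to $\deg \OO_{C_i}(D_j)$, and the off-diagonal cases are handled the same way. The only variation is the case $i=j$. You read off $-2$ from the local model $\widetilde{\CC^2/\mu_{N+1}}\times\CC$, whereas the paper avoids justifying that product structure near the fiber by using the sequence $0\to N_{C_i/D_i}\to N_{C_i/Y}\to N_{D_i/Y}|_{C_i}\to 0$ with $N_{C_i/D_i}\cong\OO_{C_i}$, together with adjunction and $K_Y\cong\OO_Y$.
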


\begin{proof}
From the divisor exact sequence for $D_{j} \subset Y$, given by
\[
0 \to \OO_{Y}(-D_{j}) \to \OO_{Y} \to \OO_{D_{j}} \to 0,
\]
we therefore have
\begin{equation*}
\chi \big(\OO_{C_{i}}(-1), \OO_{D_{j}}\big)  = \chi\big(
\OO_{C_{i}}(-1), \OO_{Y} \big) - \chi \big( \OO_{C_{i}}(-1), \OO_{Y}(-D_{j})\big) .
\end{equation*}
Since both $\OO_{Y}$ and $\OO_{Y}(-D_{j})$ are locally-free, we have
\begin{equation*}
\begin{split}
\chi\big(\OO_{C_{i}}(-1), \OO_{D_{j}}\big) & =  - \chi\big(\OO_{C_{i}}(-1)\big) + \chi\big( \OO_{Y}(D_{j})|_{C_{i}} \otimes \OO_{C_{i}}(-1) \big) \\
& = \deg \big( \OO_{C_{i}}(D_{j})\big)
\end{split}
\end{equation*}

It remains to compute $ \deg \big( \OO_{C_{i}}(D_{j})\big)$ and show
that it coincides with (minus) the Cartan pairing. The divisors
$D_{i}$ and $D_{j}$ are disjoint for $|i-j|\geq 2$, while $D_i\cap
D_j$ is a section of both rulings when $|i-j|=1$. Hence
\[
\deg \big( \OO_{C_{i}}(D_{j})\big)
=
\begin{cases}
1, & |i-j|=1\\
0, & |i-j|\geq 2.
\end{cases}
\]
For $i=j$, since $C_{i}$ is a fiber of the ruling $D_{i} \to B$, we have $N_{C_{i}/D_{i} }\cong \OO_{C_{i}}$. The normal bundle sequence
\[
0\longrightarrow N_{C_{i}/D_{i}}
\longrightarrow N_{C_{i}/Y}
\longrightarrow N_{D_{i}/Y}|_{C_{i}}
\longrightarrow 0
\]
and adjunction give
\[
\deg\bigl(N_{D_{i}/Y}|_{C_{i}}\bigr)
=
\deg\bigl(\det N_{C_{i}/Y}\bigr)
=
\deg\bigl(K_{C_{i}}\otimes K_{Y}^{-1}|_{C_{i}}\bigr)
=
-2.
\]
Since \(N_{D_{i}/Y}\cong\OO_{Y}(D_{i})|_{D_{i}}\), this yields $ \deg
\big( \OO_{C_{i}}(D_{i})\big) =-2$ which completes the proof.
\end{proof}

We define $N_{\leq 1}(Y) \subset N(Y)$ to be the subgroup generated by
sheaves with compact support of dimension at most $1$. The
Fourier--Mukai functor does not preserve the filtration on numerical
$K$-theory by dimension of support. Consequently, it is not true in
general that $\FM$ induces an isomorphism of $N_{\leq 1}(Y)$ with
$N_{\leq 1}(\XX)$. It is for this reason one often restricts to
multi-regular classes \cite{Beentjes-Calabrese-Rennemo,
BryanCadmanYoung}. However, as we will explain shortly, it is the case
that for the specific class of orbifolds under consideration in this
paper, we will have an isomorphism $N_{\leq 1}(Y) \cong N_{\leq
1}(\XX)$ induced by $\FM$.

\subsection{Numerical $K$-theory of Points and Exceptional Curves on $\XX$ and $Y$}

Throughout, let $\XX$ be a local orbifold CY3 of $A_{N}$-type and let
$Y$ be the crepant resolution. With the ultimate goal of constructing
generating series of enumerative invariants, we will devote the next
few sections to giving an explicit description of the numerical
$K$-theory groups of $\XX$ and $Y$ in a manner compatible with the
Fourier--Mukai map $\FM$.

Let $\pi_{*} : N(Y) \to N(X)$ be the pushforward of numerical
$K$-theory classes to the coarse space. We define the group
$N_{\exc}(Y) \subset N_{\leq 1}(Y) $ of \emph{exceptional classes} as
follows
\[
N_{\exc}(Y) = (\pi_{*})^{-1}\big(N_{0}(X)\big)
\]
which consists of curve classes supported in the ruling of the
exceptional divisors of $\pi$, as well as multiples of the generic
point class $[\OO_{y}]$.

Recall that we denote by $\Lambda_{A_{N}}$ the $A_{N}$ root lattice
(Definition \ref{defn: defn of AN root lattice}). Also let $R_{i}$ be
the representation of $\mu_{N+1}$ associated to the simple root
$\alpha_{i}$, and let $b \in \BB $ be a generic point in the orbifold
locus $\BB \subset \XX$.

\begin{lemma}\label{lemma: FM with N_exc and N_0}
The groups $N_{0}(\XX)$ and $N_{\exc}(Y)$ are given explicitly in terms of generators as follows
\begin{align*}
& N_{0}(\XX) = \langle [\OO_{\pt}] , [\OO_{b} \otimes R_{1}], \ldots, [\OO_{b} \otimes R_{N}] \rangle \\
& N_{\exc}(Y) = \langle [\OO_{y}], [\OO_{C_{1}}(-1)], \ldots, [\OO_{C_{N}}(-1)] \rangle.
\end{align*}
The Fourier--Mukai map~\eqref{eqn: FM on num K-theory} induces an isomorphism which is given on these generators as follows:
\begin{align*}
\FM \colon N_{\exc}(Y) & \xrightarrow{\;\sim\;} N_{0}(\XX) \\
 [\OO_{C_{i}}(-1)] & \mapsto -[\OO_{b} \otimes R_{i}] \\
 [\OO_{y}] & \mapsto [\OO_{\pt}].
\end{align*}
\end{lemma}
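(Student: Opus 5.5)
The plan is to reduce everything to the fibrewise (surface) McKay correspondence for $[\CC^{2}/\mu_{N+1}]$, using that the Bridgeland--King--Reid functor is compatible with the local product structure $[\CC^{2}/\mu_{N+1}]\times \CC$ near $\BB$.

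\textbf{Generators on each side.} A $0$-dimensional sheaf on $\XX$ is numerically a combination of skyscrapers. Away from $\BB$ every such skyscraper is numerically $[\OO_{\pt}]$. At a point $b\in\BB$ the skyscrapers are $\OO_{b}\otimes\rho$ for irreducible representations $\rho$ of $\mu_{N+1}$. These are independent of $b$ up to numerical equivalence, since $B\cong\CC$ is connected and $\OO_{b}\otimes\rho$ deforms flatly along $\BB$. Moreover $\sum_{\rho}[\OO_{b}\otimes\rho]=[\OO_{b}\otimes \CC[\mu_{N+1}]]=[\OO_{\pt}]$, because a generic free orbit degenerates to the regular representation at $b$. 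Since the $R_{i}$ span the representation ring modulo the regular representation, $N_{0}(\XX)$ is generated as claimed.

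On $Y$, a class in $N_{\exc}(Y)$ is represented by sheaves supported on finitely many fibres of $\pi$. Such a fibre is either a point, or, over $b\in B$, an $A_{N}$-chain $C_{1}\cup\dots\cup C_{N}$ of $\PP^{1}$'s. The $K$-theory of such a chain is generated by a point class and the classes $\OO_{C_{i}}(-1)$. Again, fibres over different points of $B$ are algebraically, hence numerically, equivalent. This gives the stated generators of $N_{\exc}(Y)$.

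\textbf{Computing $\FM$ on generators.} Since $Y=\Hilb^{[\OO_{\pt}]}(\XX)$ and $\FM$ is given by the universal family, $\FM(\OO_{y})=\OO_{Z_{y}}$, the corresponding cluster, whose class is $[\OO_{\pt}]$. For $\OO_{C_{i}}(-1)$, I would restrict to a slice $\{c\}\times[\CC^{2}/\mu_{N+1}]$ transverse to $B$ at $b$. Base change for the BKR kernel along the product $[\CC^{2}/\mu_{N+1}]\times\CC$ reduces the computation to the surface McKay correspondence of Kapranov--Vasserot / Ito--Nakajima. That correspondence sends $\OO_{C_{i}}(-1)$ to $\OO_{0}\otimes\rho_{i}[1]$, with $\rho_{i}$ the nontrivial irreducible indexed compatibly with the simple root $\alpha_{i}$. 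The shift $[1]$ produces the sign, giving $[\OO_{C_{i}}(-1)]\mapsto-[\OO_{b}\otimes R_{i}]$.

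\textbf{Isomorphism.} Since $\FM$ is an isomorphism on $N(Y)\to N(\XX)$ and carries a generating set of $N_{\exc}(Y)$ onto a generating set of $N_{0}(\XX)$, the restriction is surjective onto $N_{0}(\XX)$ and injective as a restriction of an injective map. Independently, Lemma~\ref{lemma: Euler pairing lemma} shows the $[\OO_{C_{i}}(-1)]$ pair with the $[\OO_{D_{j}}]$ through the (negative) Cartan matrix, which is nondegenerate. Also $[\OO_{y}]$ pairs to $1$ with $\OO_{Y}$ and to $0$ with the $\OO_{D_{j}}$ for generic $y$, while $\chi(\OO_{C_{i}}(-1),\OO_{Y})=0$. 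Hence the listed generators are linearly independent, and both groups are free of rank $N+1$.

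\textbf{Main obstacle.} The main obstacle is the sign and indexing conventions in the McKay step: checking that the fibrewise BKR functor really restricts to the surface McKay functor on the slice, and that it sends $\OO_{C_{i}}(-1)$ to a shifted skyscraper with precisely the representation matching $\alpha_{i}$. I would first try to verify this via the Euler pairing, which fixes the answer up to Weyl symmetry. Since $\FM$ is an isometry, the images must satisfy $\chi(\FM\OO_{C_{i}}(-1),\FM\OO_{D_{j}})$ equal to minus the Cartan matrix. $\FM(\OO_{D_{j}})$ can be computed as the tautological-bundle-type sheaf on $\BB$, and this pins down the identification together with the sign.
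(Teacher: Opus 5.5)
Your route is the paper's. Its proof is a one-line appeal to the Kapranov--Vasserot argument for surfaces, and your reduction to the surface McKay correspondence on a transverse slice, with the generator and independence bookkeeping, is a fleshed-out version of that. One step fails as written, however: the sign. By writing $\FM(\OO_{y})=\OO_{Z_{y}}$ you have fixed $\FM$ to be the universal-family functor $\Phi=Rq_{*}\big(\OO_{\mathcal Z}\otimes p^{*}(-)\big)$. For this functor the claim $\Phi(\OO_{C_i}(-1))\cong\OO_{b}\otimes\rho_i[1]$ is false. Take $F$ supported on $\pi^{-1}(b)$ and work in the local model. Flatness of $\mathcal Z$ over $Y$ and the projection formula give $R\Gamma(\Phi(F))\cong\bigoplus_{\rho}R\Gamma(Y,F\otimes\mathcal R_{\rho})\otimes\rho$, where $\mathcal R_{\rho}=\Hom_{\mu_{N+1}}(\rho,p_{*}\OO_{\mathcal Z})$ are the tautological line bundles. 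These are globally generated, with $\mathcal R_{\rho_0}\cong\OO_Y$ and $\deg(\mathcal R_{\rho_j}|_{C_i})=\delta_{ij}$. For example, for $A_1$, $\mathcal R_{1}|_{C}$ is $\OO_C^{2}$ modulo the tautological subbundle, i.e.\ $\OO_{\PP^1}(1)$. Hence $\Phi(\OO_{C_i}(-1))\cong\OO_b\otimes\rho_i$ in degree $0$, and the sign comes out $+$. Your proposed Euler-pairing check would confirm this. Locally $\Phi(\mathcal R_{\rho_i}^{\vee})\cong\OO\otimes\rho_i$, and $\chi(\OO_{C_i}(-1),\mathcal R_{\rho_i}^{\vee})=-1=\chi(\OO_b\otimes\rho_i,\OO\otimes\rho_i)$. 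A shift by $[1]$ would flip the right-hand side to $+1$.

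The shifted correspondence $\OO_0\otimes\rho_i\leftrightarrow\OO_{C_i}(-1)[1]$ belongs to the transposed functor $\Psi\colon D(\XX)\to D(Y)$ with the same kernel $\OO_{\mathcal Z}$. By Grothendieck duality $\Psi(E)=\Phi^{-1}(E^{\vee})^{\vee}$ with $(-)^{\vee}=R\mathcal{H}om(-,\OO)$, so $\Psi$ is not $\Phi^{-1}$. To get the lemma as stated you must take $\FM=\Psi^{-1}$, i.e.\ $\FM(F)=\Phi(F^{\vee})^{\vee}$, with kernel $\OO_{\mathcal Z}^{\vee}[3]$. This functor sends $\OO_y$ to $\mathcal{E}xt^{3}(\OO_{Z_y},\OO_{\XX})$, not to $\OO_{Z_y}$, though the class is still $[\OO_{\pt}]$. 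It sends $\OO_{C_i}(-1)$ to $\OO_b\otimes\rho_i^{\vee}[-1]$, so the lemma holds with $R_i=\rho_i^{\vee}$. In other words, the minus sign is a normalization of the McKay functor, which the paper implicitly inherits by citing Kapranov--Vasserot. Your two assertions $\FM(\OO_y)=\OO_{Z_y}$ and $\FM(\OO_{C_i}(-1))=\OO_b\otimes\rho_i[1]$ cannot both hold for a single functor. The rest of your argument does not depend on the sign. One small slip there: $\chi(\OO_y,\OO_Y)=-1$, not $1$, since only $\Ext^{3}$ is nonzero; this does not affect the independence argument.
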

\begin{proof}
This is proved in essentially the same way as the classical result of Kapranov-Vasserot
\cite[Theorem~2.3]{Kapranov-Vasserot} for surfaces.
\end{proof}

Using the above lemma, we may canonically identify 
\[
N_{\exc}(Y) \cong \ZZ \oplus \Lambda_{A_{N}}
\qquad\text{and}\qquad
N_{0}(\XX) \cong \ZZ \oplus \Lambda_{A_{N}},
\]
given explicitly as follows:
\begin{enumerate}[(a)]
\item On $N_{\exc}(Y)$, the identification sends the class $[\OO_{y}]$
to $(1,0)$ and the class $[\OO_{C_{i}}(-1)]$ to $(0, \alpha_{i})$,
where $\alpha_{i}$ is the $i$-th simple root of $A_{N}$.
\item On $N_{0}(\XX)$, the identification sends the class
$[\OO_{\pt}]$ to $(1,0)$ and the class $[\OO_{b}\otimes R_{i}]$ to
$(0, -\alpha_{i})$, where $R_{i}$ denotes the irreducible
representation of $\mu_{N+1}$ corresponding to $\alpha_{i}$.
\end{enumerate}
With respect to these identifications, $\FM$ is the identity on $\ZZ \oplus \Lambda_{A_{N}}$.

\subsection{Numerical $K$-theory of Non-exceptional Classes on $Y$}
We next want to understand the numerical groups of non-exceptional
 classes on $Y$, which is slightly more subtle. To start, we note
that the holomorphic Euler characteristic $\chi(-)$ induces a
canonical splitting over $\ZZ$ 
\[
N_{\leq 1}(Y) = N_{1}(Y) \oplus N_{0}(Y)
\]
where $N_{1}(Y)$ consists of classes with vanishing Euler
characteristic supported on curves, and $N_{0}(Y) \isom \ZZ$ is
generated by the class $[\OO_{y}]$.  

One might na\"{\i}vely hope to obtain a natural splitting of $N_{\leq
1}(Y)$ into a direct sum of $N_{\exc}(Y)$ and a group of classes
orthogonal to exceptional classes under the Euler pairing, but such a
splitting exists only over $\QQ$. More precisely, we regard the
following as the natural numerical group of non-exceptional classes:

\begin{definition}\label{defn: non-exceptional curve class defn}
Let $N_{1}(Y)^{\natural} \subset N_{1}(Y) \otimes  \QQ$ be the
group consisting of classes $F \in N_{1}(Y)
\otimes  \QQ$ such that $\chi (F,\OO_{D_{i}})=0
$ for all $i = 1, \ldots, N$.
\end{definition}

The inevitability of $\QQ$-coefficients can be understood as
follows. Taking the orthogonal complement to the exceptional divisors
within $N_{1}(Y)$ is a canonical way to isolate non-exceptional curve
classes. It is not, however, a primitive sublattice of $N_{1}(Y)$. We
therefore need to introduce denominators. But in order to get an
integral class in $N_{1}(Y)$ we must accordingly add a fractional
linear combination of exceptional curve classes. 

It is clear from Definition \ref{defn: non-exceptional curve class defn} and Lemma \ref{lemma: FM with N_exc and N_0} that we have
\begin{equation}\label{eqn: initial num K-theory inclusion}
N_{\leq 1}(Y) \subset N_{1}(Y)^{\natural} \oplus \ZZ \oplus (\Lambda_{A_{N}} \otimes  \QQ).
\end{equation}
The point of the following lemma (which we apply in Section
\ref{subsection: PT Theory of Y and Weyl Invariance} when constructing
generating series of enumerative invariants) is that in order to
capture all of $N_{\leq 1}(Y)$ it suffices to enlarge the root lattice
$\Lambda_{A_{N}}$ to the weight lattice $\LambdaWeightAN \subset
\Lambda_{A_{N}} \otimes  \QQ$.  

\begin{lemma}\label{lemma: Ktheory on resolution}
We have the inclusion
\begin{equation*}
N_{\leq 1}(Y)  \subset N_{1}(Y)^{\natural} \oplus \ZZ \oplus \LambdaWeightAN.
\end{equation*}
\end{lemma}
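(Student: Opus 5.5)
Given $F \in N_{\leq 1}(Y)$, I will produce the decomposition explicitly and show that its exceptional component has coefficients in the weight lattice. First split off the point part: put $n=\chi(F)$ and $F_1 = F - n[\OO_y] \in N_1(Y)$, so the $\ZZ$-summand is $n$. Next I look for a rational combination $u=\sum_i u_i [\OO_{C_i}(-1)]$, $u_i\in\QQ$, such that $\beta := F_1 - u$ satisfies $\chi(\beta, \OO_{D_j}) = 0$ for every $j$. Each $[\OO_{C_i}(-1)]$ has Euler characteristic zero, so $\beta$ lies in $N_1(Y)\otimes\QQ$, and by Definition~\ref{defn: non-exceptional curve class defn} it lies in $N_1(Y)^{\natural}$. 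Under the identification of Lemma~\ref{lemma: FM with N_exc and N_0}, $u$ corresponds to $\sum_i u_i\alpha_i \in \Lambda_{A_N}\otimes\QQ$. This already recovers \eqref{eqn: initial num K-theory inclusion}. What remains is to show $\sum_i u_i \alpha_i \in \LambdaWeightAN$.

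By Lemma~\ref{lemma: Euler pairing lemma}, the condition on $u$ is the linear system
\[
\chi(F_1,\OO_{D_j}) = \sum_i u_i\, \chi(\OO_{C_i}(-1),\OO_{D_j}) = -\sum_i u_i \langle \alpha_i,\alpha_j\rangle, \qquad j=1,\ldots,N.
\]
The Cartan matrix is nondegenerate, so $u$ is uniquely determined. Setting $v = \sum_i u_i\alpha_i$, the system says $\langle v, \alpha_j\rangle = -\chi(F_1,\OO_{D_j})$ for all $j$.

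The key integrality observation is that $\chi(F_1,\OO_{D_j})$ is an integer. Indeed $F_1$ is an honest compactly supported class and $\OO_{D_j}$ is a coherent sheaf, so the pairing $\chi: K_c(Y)\otimes K(Y)\to\ZZ$ takes integer values. Hence $\langle v,\alpha_j\rangle \in \ZZ$ for every simple root $\alpha_j$. Since the $\alpha_j$ form a $\ZZ$-basis of $\Lambda_{A_N}$, we get $\langle v, \lambda\rangle\in\ZZ$ for all $\lambda\in\Lambda_{A_N}$. So $v$ lies in the dual lattice of $\Lambda_{A_N}$, which is exactly $\LambdaWeightAN$, as recalled in Section~\ref{sec: root theory section}.

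Altogether $F = \beta + n[\OO_y] + u$ with $\beta \in N_1(Y)^\natural$, $n\in\ZZ$ and $u \leftrightarrow v\in\LambdaWeightAN$, which gives the claimed inclusion. The only point needing care is bookkeeping: the embedding of $N_{\leq 1}(Y)$ into the direct sum is defined by this unique rational decomposition, and uniqueness follows from nondegeneracy of the Cartan pairing. I expect no real obstacle beyond tracking the sign conventions of Lemma~\ref{lemma: Euler pairing lemma}, and signs do not affect membership in $\LambdaWeightAN$.
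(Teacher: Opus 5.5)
Your proposal is correct and takes essentially the same approach as the paper: split $F$ into a non-exceptional part, $n[\OO_y]$, and a rational exceptional part $v$, then use Lemma~\ref{lemma: Euler pairing lemma} and integrality of the Euler pairing with $\OO_{D_j}$ to get $\langle v,\alpha_j\rangle\in\ZZ$, hence $v\in\LambdaWeightAN$. The only difference is minor: you construct the decomposition explicitly from the nondegenerate Cartan matrix, and you pair $F_1$ rather than $F$ with $\OO_{D_j}$, which avoids having to check that $[\OO_y]$ pairs trivially with $\OO_{D_j}$; the paper instead takes the decomposition directly from \eqref{eqn: initial num K-theory inclusion}.
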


\begin{proof}
By (\ref{eqn: initial num K-theory inclusion}) an arbitrary element $E
\in N_{\leq 1}(Y)$ can be uniquely written as 
\[
E = \overline{E} + n[\OO_{y}] + \sum_{j=1}^{N} v_{j} [\OO_{C_{j}}(-1)]
\]
where $\overline{E} \in N_{1}(Y)^{\natural}$, $n \in \ZZ$, and
\[
v=\sum_{j=1}^{N}v_{j} \alpha_{j}  \in
\Lambda_{A_{N}} \otimes  \QQ.
\]
However, we require an integer-valued Euler pairing of $E$ with all
classes, which imposes strong constraints on $v \in \Lambda_{A_{N}}
\otimes  \QQ$. We consider the Euler pairing of $E$ with the
classes $\OO_{D_{i}}$. It is clear that the $\overline{E}$ and $n
[\OO_{y}]$ terms are annihilated in the pairing, and we are left with:
\[
\chi(E, \OO_{D_{i}}) = \sum_{j=1}^{N} v_{j}\,  \chi(\OO_{C_{j}}(-1) ,
\OO_{D_{i}}) = - \langle v , \alpha_{i} \rangle   
\]
where $\left\langle \cdot ,\cdot \right\rangle$ is the Cartan
pairing. The equality follows from Lemma \ref{lemma: Euler
pairing lemma}. By the definition of the dual lattice, the above
pairing being integral implies $v \in \LambdaWeightAN$.  
\end{proof}

\subsection{Numerical $K$-theory of Curves on $\XX$} 

One typically defines the group of multiregular classes $N_{\mr}(\XX)
\subseteq N_{\leq 1}(\XX)$ to be the image of $N_{\leq 1}(Y)$ under
$\FM$ \cite{Beentjes-Calabrese-Rennemo, BryanCadmanYoung}. If $\XX$ is a local orbifold CY3 of $A_{N}$-type,
the orbifold locus contains no proper one-dimensional
component. Hence, every compactly supported one-dimensional sheaf on
$\XX$ has generically trivial stabilizer along each component of its
support. It follows that every compactly supported curve class is
multi-regular, and therefore 
\[
N_{\mr}(\XX)=N_{\leq 1}(\XX).
\]

In Definition \ref{defn: non-exceptional curve class defn} of the
previous section we defined $N_{1}(Y)^{\natural}$, the group of
non-exceptional classes on the resolution $Y$. We use the Fourier--Mukai
isomorphism to define the corresponding group on the orbifold:
\begin{definition}\label{defn: definition of N_1 on the orbifold} 
$N_{1}(\XX) \coloneqq \FM \big( N_{1}(Y)^{\natural} \big) \subset
N_{\leq 1}(\XX) \otimes  \QQ$.
\end{definition}
 We will identify $N_{1}(\XX )$ and
$N_{1}(Y)^{\natural}$ using $\FM$ throughout. 

The presence of $\QQ$-coefficients is of course entirely parallel to
the previous section. In particular, we have 
\begin{lemma}\label{lemma: basic K-theory lemma}
For $\XX$ a local orbifold CY3 of $A_{N}$-type we have
\[
N_{\leq 1}(\XX) \subset N_{1}(\XX) \oplus \ZZ \oplus \LambdaWeightAN.
\]
\end{lemma}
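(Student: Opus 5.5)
The plan is to transport Lemma~\ref{lemma: Ktheory on resolution} across the derived McKay correspondence. By the discussion just preceding the statement, every compactly supported curve class on $\XX$ is multi-regular, so $N_{\leq 1}(\XX) = N_{\mr}(\XX) = \FM\big(N_{\leq 1}(Y)\big)$. Hence it suffices to show that the isomorphism $\FM \otimes \QQ : N(Y)\otimes\QQ \to N(\XX)\otimes\QQ$ carries the subgroup $N_{1}(Y)^{\natural} \oplus \ZZ \oplus \LambdaWeightAN$ onto $N_{1}(\XX) \oplus \ZZ \oplus \LambdaWeightAN$. Here the $\ZZ\oplus\Lambda_{A_N}$ summands on the two sides are the identifications of $N_{\exc}(Y)$ and $N_{0}(\XX)$ from Lemma~\ref{lemma: FM with N_exc and N_0}, extended $\QQ$-linearly to the weight lattice.

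The key steps are as follows.

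First, I apply $\FM$ to the inclusion of Lemma~\ref{lemma: Ktheory on resolution}. This gives $N_{\leq 1}(\XX) \subset \FM\big(N_{1}(Y)^{\natural}\big) + \FM(\ZZ[\OO_{y}]) + \FM(\LambdaWeightAN)$, computed inside $N(\XX)\otimes \QQ$.

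Second, I identify each summand. The first is $N_{1}(\XX)$ by Definition~\ref{defn: definition of N_1 on the orbifold}. By Lemma~\ref{lemma: FM with N_exc and N_0}, $\FM[\OO_{y}] = [\OO_{\pt}]$, which is the $\ZZ$ factor. Also by Lemma~\ref{lemma: FM with N_exc and N_0}, $\FM$ is the identity on $\ZZ\oplus\Lambda_{A_N}$ under the stated identifications, with $[\OO_{C_i}(-1)] \mapsto -[\OO_b\otimes R_i]$. By $\QQ$-linearity, the $\QQ$-span of the exceptional classes with coefficient vector in $\LambdaWeightAN$ maps to the corresponding $\LambdaWeightAN$-combinations of the $-[\OO_b\otimes R_i]$.

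Third, I check that the sum is direct, so that writing it as $\oplus$ is justified. This holds because $\FM\otimes\QQ$ is injective, and the sum on $Y$ is direct: the decomposition in the proof of Lemma~\ref{lemma: Ktheory on resolution} is unique, since $N_{1}(Y)^{\natural}$ is the Euler-pairing orthogonal complement of the $[\OO_{D_i}]$ and $\chi$ kills it, while the Cartan matrix is nondegenerate.

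The only real subtlety is that $\FM$ does not preserve the dimension filtration in general, so one needs $\FM(N_{\leq 1}(Y)) = N_{\leq 1}(\XX)$. For the inclusion as stated, only $N_{\leq 1}(\XX) \subset \FM(N_{\leq 1}(Y))$ is actually needed, and that is exactly the multi-regularity observation.

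If one prefers an intrinsic argument, the direct alternative mirrors the proof of Lemma~\ref{lemma: Ktheory on resolution} on $\XX$. Write $E$ using the generators of $N_{0}(\XX)$ and a class orthogonal to $\FM[\OO_{D_i}]$, then pair with $\FM[\OO_{D_i}]$; since $\FM$ is an isometry, Lemma~\ref{lemma: Euler pairing lemma} forces the coefficient vector to lie in the dual lattice. In either version I expect the main obstacle to be the first step, namely justifying that every class in $N_{\leq 1}(\XX)$ comes from $N_{\leq 1}(Y)$. For this I rely on the multi-regularity discussion preceding the statement.
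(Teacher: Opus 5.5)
Your proposal is correct and follows the paper's proof, which simply applies $\FM^{-1}$ to reduce to Lemma~\ref{lemma: Ktheory on resolution}. You also make explicit two steps the paper leaves implicit. First, $N_{\leq 1}(\XX)\subset \FM\big(N_{\leq 1}(Y)\big)$, which follows from the multi-regularity observation. Second, the summands correspond under $\FM$, by Definition~\ref{defn: definition of N_1 on the orbifold} and Lemma~\ref{lemma: FM with N_exc and N_0}.
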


\begin{proof}
Applying the inverse Fourier--Mukai functor $\FM^{-1}$, the statement reduces to the result of Lemma \ref{lemma: Ktheory on resolution}. 
\end{proof}

\subsection{The PT Theory of $Y$ and Weyl Invariance}\label{subsection: PT Theory of Y and Weyl Invariance} 

With the necessary background established, we turn now to our primary
interest of curve counting. For $\XX$ a local orbifold CY3 of
$A_{N}$-type and $Y$ the associated crepant resolution, the first goal
is to review the PT theory of $Y$ and prove a key structural result on
Weyl invariance (Theorem \ref{thm: Main Weyl invariance
theorem}). This will be necessary to define orbifold Gopakumar-Vafa
invariants (Definition \ref{defn: Main GVPT Defn}).

Recall that the exceptional locus $\pi^{-1}(B)$ of $\pi: Y \to X$
consists of an $A_{N}$ configuration of ruled surfaces $D_{i} \to B$
for all $i =1, \ldots, N$ with general fiber $C_{i} \isom \PP^{1}$ and
that we identify (Lemma~\ref{lemma: FM with N_exc and N_0}) the
simple root $\alpha_{i}$ with a numerical $K$-theory class
\[
\alpha_{i} = [\OO_{C_{i}}(-1)] \in N_{1}(Y).
\]
We may then regard vectors in the weight lattice as elements in
numerical $K$-theory
\[
v= \sum_{i=1}^{N} v_{i} \alpha_{i} \in N_{1}(Y) \otimes  \QQ .
\]
The main application of Lemma \ref{lemma: Ktheory on resolution} is to
conveniently package the discrete invariants of $F$. Associated to
each triple 
\[
(\beta, n, v) \in N_{1}(Y)^{\natural} \oplus \ZZ \oplus \LambdaWeightAN
\]
is a fine (typically non-proper) moduli scheme $P_{n}(Y, \beta +
v)$ parameterizing stable pairs such that $F$ has $K$-theory
class $(\beta, n, v)$.

We define the PT invariants to be Behrend-weighted Euler characteristics
\[
\PT_{\beta + v, n}(Y) = e\big( P_{n}(Y, \beta + v), \nu \big) \in \ZZ
\]
where $\nu : P_{n}(Y, \beta + v) \to \ZZ$ is Behrend's
constructible function \cite{KaiBehrend}. We then assemble the PT
invariants into a generating series in the usual way. 
\begin{definition}
The \emph{PT partition function} of $Y$ is the following formal generating series:
\begin{equation}\label{eqn: PT series defn}
\ZZZ_{Y}^{\PT}(Q, y, \bm{w}) = \sum_{\beta \in N_{1}(Y)^{\natural}} \, \sum_{n \in \ZZ} \;\;  \sum_{v \in \LambdaWeightAN} \, \PT_{\beta + v, n}(Y) \, Q^{\beta} y^{n} \bm{w}^{v}
\end{equation}
where $\bm{w}^{v} = w_{1}^{v_{1}}\cdots w_{N}^{v_{N}}$ for $v=\sum_{i}v_{i}\alpha_{i}$. 
\end{definition}

\begin{lemma}\label{lem: formula for Zexc}
The PT partition function restricted to curve classes supported in the
exceptional fibers of $\pi$ 
\begin{equation*}
\ZZZ_{\exc}(y, \bm{w}) = \sum_{n \in \ZZ} \, \sum_{v \in
\Lambda_{A_{N}}} \, \PT_{v, n}(Y) \, y^{n} \bm{w}^{v} 
\end{equation*}
is a formal generating series given explicitly by 
\begin{equation*}
\ZZZ_{\exc}(y, \bm{w}) = \prod_{1 \leq a \leq b \leq N} M(\bm{w}_{[a,b]}, -y)
\end{equation*}
where for $a,b \in [1, N]$ with $a \leq b$ we define $\bm{w}_{[a,b]} = w_{a} w_{a+1} \cdots w_{b}$ and 
\[
M(x, y) = \prod_{n=1}^{\infty} (1-xy^{n})^{-n}
\]
is the (weighted) MacMahon function. Note that we may also write
$\ZZZ_{\exc}$ as 
\[
\ZZZ_{\exc}(y,\bm{w}) = \prod_{v\in R^{+}}M(\bm{w}^{v },-y)
\]
where $R^{+}$ is the set of positive roots.
\end{lemma}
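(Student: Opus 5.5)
We reduce the computation of $\ZZZ_{\exc}$ to that of the PT theory of the local $A_N$ surface times a line, where the answer is a product of MacMahon factors, one for each positive root.

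\textbf{Step 1: only root-lattice classes occur.} A stable pair whose support is a compact curve lying over points of $B$ has all its curve components among fibers of the rulings of the $D_i$. Hence its class is $n[\OO_y]+\sum_i v_i[\OO_{C_i}(-1)]$ with $v\in\Lambda_{A_N}$ effective. This is why the sum runs over $\Lambda_{A_N}$, and in fact over the cone of nonnegative combinations of the $\alpha_i$.

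\textbf{Step 2: reduce to a single fiber.} The torus of translations of $B\cong\CC$ does not directly act. Instead, use that the moduli of such pairs fibers over $\mathrm{Sym}^\bullet B$ by recording the points of $B$ under the support. The Behrend-weighted Euler characteristic is multiplicative over such stratifications; this is the standard power-structure argument, as in Behrend--Bryan or Bryan--Kool. It gives
\[
\ZZZ_{\exc}=\Big(\sum_{n,v}\PT^{\mathrm{pt}}_{v,n}\,y^n\bm{w}^v\Big)^{e(B)},
\]
where $\PT^{\mathrm{pt}}$ counts pairs supported over a single point $b$. This formula needs more care, because $e(B)=1$ and a punctual contribution is naturally local. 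Since $e(\CC)=1$, I would rather argue directly that $\ZZZ_{\exc}$ equals the PT series of $Y_0=\widetilde{\CC^2/\mu_{N+1}}\times\CC$ for exceptional classes. The reason is that, analytically and formally, a neighbourhood of $\pi^{-1}(B)$ looks like the local model $Y_0$, and Behrend functions are local. For $Y_0$ we can use the $\CC^*$ scaling on the $\CC$ factor together with the torus of the toric surface.

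\textbf{Step 3: localization on $Y_0$.} The space $Y_0$ is a toric CY3 whose toric diagram is a chain of $N$ vertical legs. Equivalently it is the $A_N$ strip geometry, with all compact curves $C_i$ being $(-2,0)$-curves. Torus-fixed stable pairs are computed by the PT topological vertex, and the Behrend weighting gives the sign $(-1)^{\ldots}$ (Behrend--Fantechi). The known evaluation for such strips, as in Iqbal--Kashani-Poor, the Nagao--Nakajima wall crossing, or Bryan--Cadman--Young, gives the product formula
\[
\prod_{1\le a\le b\le N} M(w_a\cdots w_b,-y).
\]
Each chain $C_a\cup\dots\cup C_b$ is a curve of class $\alpha_a+\dots+\alpha_b$, a positive root, with normal bundle $\OO\oplus\OO(-2)$. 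Each such chain contributes GV invariant $n_0=1$ with no higher genus. The factor $M(x,-y)$ is exactly $\exp$ of the GV multiple-cover expression for $n_0=1$, namely
\[
\sum_k \frac{x^k}{k}\,\psi_{-(-y)^k}^{-1}
\]
once the PT sign convention is included. Finally, positive roots of $A_N$ are exactly the $\alpha_a+\dots+\alpha_b$, which gives the second form of the product.

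\textbf{Main obstacle.} The hard step is Step 2: justifying that the noncompact family over $B$ contributes exactly like the local model, including the Behrend function. I would first try the $\CC^*$-action on $B$ that fixes $0$, assuming it exists on the global $\XX$. If it does not, I would use the stratification by support points combined with the fact that the weighted Euler characteristic of a family over $\CC\setminus$ (finite set) that is locally trivial, with Euler characteristic $0$ contributions, vanishes. This reduces everything to one fiber of the local model.
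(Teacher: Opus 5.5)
Your proposal is essentially the paper's argument: pairs in exceptional classes are set-theoretically supported on $\pi^{-1}(B)$, so their moduli spaces (and Behrend functions) depend only on the formal neighbourhood of $\pi^{-1}(B)$, which agrees with that of the local model $S_N\times\CC$; one then quotes the known toric computation there. The only difference is the citation: the paper uses Young's DT formula and strips the degree-zero factor $M(1,-y)^{N+1}$ via the DT/PT correspondence, while you quote PT-vertex/strip computations directly. The fallbacks in your last paragraph (the $\CC^*$-action on $B$ and the stratification by support points) are not needed, because in both the local-curve and local-surface cases the neighbourhood of $\BB$ is globally of the form $\mathcal{V}\times\CC$, with $\mathcal{V}$ an orbifold surface having a single $A_N$ point, so the identification with $S_N\times\CC$ holds along all of $\pi^{-1}(B)$.
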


\begin{proof}
Since the curves of class $v$ all have reduced
support in the exceptional divisor, $\PT_{v,n}(Y)$ is determined by a
formal neighborhood of the exceptional divisor and thus we may replace
$Y$ with another threefold having the same exceptional divisor and
formal neighborhood. In particular, consider the local threefold model
\[
Y_{N} =  S_{N} \times \CC,
\]
which is the unique crepant resolution of
\[
\CC^3/\mu_{\scriptscriptstyle N+1},
\]
where $\mu_{\scriptscriptstyle N+1} \subset SL_3(\mathbb C)$ acts with
weights $(1,-1,0)$. Then the formal neighbourhood of the exceptional
divisor coincides with the formal neighbourhood of $Y_{N}$, so we have
\[
\PT_{v, n}(Y) = \PT_{v, n}(Y_{N}).
\]
Ben Young \cite[Theorem 1.7]{BenYoungDiagrams} computes the
Donaldson--Thomas (DT) partition function of $Y_{N}$ to be
\[
\ZZZ^{\DT}_{Y_{N}}(y,\bm{w}) = M(1,-y)^{N+1} \prod_{1\leq a\leq b\leq N} M(\bm{w}_{[a,b]},-y).
\]
The factor $M(1,-y)^{N+1}$ is the degree-zero DT contribution, which
is removed when passing from DT to PT theory using the PT/DT correspondence
\cite{Bridgeland-PTDT,Toda-PTDT}. We therefore have:
\[
\ZZZ_{\exc}(y, \bm{w}) = \ZZZ^{\PT}_{Y_{N}}(y,\bm w) = \prod_{1\leq a\leq b\leq N} M(\bm{w}_{[a,b]},-y)
\]
which completes the proof.
\end{proof}

Bryan and Steinberg \cite{Bryan-Steinberg} study $\pi$-relative PT
invariants, where $\pi : Y \to X$ is the crepant resolution. They
prove that the formal generating series of the $\pi$-relative PT
invariants is related to the standard PT theory by the equality of
series:
\begin{equation}\label{eqn: defn of BS partition function}
\ZZZ_{Y}^{\BS }(Q, y, \bm{w}) = \frac{\ZZZ_{Y}^{\PT}(Q, y, \bm{w})}{\ZZZ_{\exc}(y, \bm{w})}.
\end{equation}
For fixed $\beta \in N_{1}(Y)^{\natural}$, we define
\begin{equation}
\ZZZ^{\BS }_{\beta}(y, \bm{w}) \coloneqq \Coef_{Q^{\beta}} \ZZZ_{Y}^{\BS }(Q, y, \bm{w}).
\end{equation}
Recall that, as in Section \ref{sec: root theory section}, we denote
by $\ZZ[\LambdaWeightAN]$ the group ring of the weight lattice, which
carries an action by the Weyl group $W \cong S_{N+1}$ of $A_{N}$.  

We will make use of Beentjes-Calabrese-Rennemo
\cite{Beentjes-Calabrese-Rennemo} and Buelles-Moreira
\cite{Buelles-Moreira}, both of which apply wall-crossing techniques
for projective varieties and orbifolds. As remarked in \cite[Section
1.3.2]{Beentjes-Calabrese-Rennemo}, extending the wall-crossing
results to the quasi-projective setting requires the existence of a
$(-1)$-shifted symplectic structure \cite{PTVV} on the relevant open moduli
substacks. Strictly speaking, the shifted symplectic structure
required here is slightly more general than that of
\cite[Corollary~6.2]{BravDyckerhoff2021}, where they work with
complexes with proper support. However combining this with
\cite[Theorem~4.0.8]{PreygelShiftedSymplectic}, which treats perfect
complexes with fixed determinant on an ordinary quasi-projective
CY3, the existence of the required shifted symplectic structure follows.

Let $\XX$ be an orbifold CY3 with crepant resolution $Y$. Fixing a
generic ample divisor $A$ on the coarse space, let $L = p^{*}
\OO_{X}(A)$, where $p : \XX \to X$ is the canonical forgetful
map. Given any class $F \in N_{1}(\XX)$ there is an intersection
pairing with $L$ valued in $N_{0}(\XX)\otimes \QQ $ defined as follows: 
\[
F \star L \coloneqq F \otimes L - F .
\]
Twisting a one-dimensional sheaf by a line bundle doesn't change its
support, so the difference $F \otimes L - F$ indeed is a
zero-dimensional numerical $K$-theory class.


For $\beta \in N_{1}(\XX )$, we define
\[
\ZZZ^{\PT}_{\beta}(y,\bm{w})=\Coef_{Q^{\beta}} \ZZZ^{\PT}_{\XX}(Q,y,\bm{w}).
\]

We state the main results of Beentjes-Calabrese-Rennemo below,
adapted to our notation\footnote{We warn the reader that in the
published version of \cite{Beentjes-Calabrese-Rennemo}, Theorem B is
stated incorrectly in the introduction. The reader should refer to
Theorem B as stated in section 7, page 507 of \cite{Beentjes-Calabrese-Rennemo}.  }:

\begin{theorem}[{\cite[Theorems B and
C]{Beentjes-Calabrese-Rennemo}}]\label{thm: restatement of BCR
theorem} Let $\beta \in N_{1}(\XX )\cong N_{1}(Y)^{\natural}$. Then the two series $\ZZZ^{\BS
}_{\beta}(y, \bm{w})$ and $\ZZZ^{\PT}_{\beta }(y,\bm {w})$ are
possibly different Laurent expansions of the same rational function
$f_{\beta}(y, \bm{w})$.  Moreover,
\[
f_{\beta}(y,\bm {w}) = \sum_{D}\frac{g_{D}}{h_{D}} 
\]
where $g_{D}$ and $h_{D}$ are Laurent polynomials and the sum is over all
decompositions $D$ of $\beta =
\sum_{i=1}^{r} \beta_{i}$ into effective classes. Moreover $h_{D}$ is explicitly given by
\[
h_{D} = \prod_{i=1}^{r} \bigg( 1- \prod_{j=1}^{i} y^{2 n_{j}}
w_{1}^{2a_{1,j}}\cdots w_{N}^{2a_{N,j}}\bigg)^{2i}
\]
where the numbers $n_{j}, a_{1,j},\dotsc ,a_{N,j}$ are defined by
\[
\beta_{j} \star L = n_{j}[\OO_{\pt}] - \sum_{i=1}^{N} a_{i,j} [\OO_{b}
\otimes R_{i}]. 
\]

\end{theorem}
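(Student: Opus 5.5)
The plan is to deduce the statement by translating \cite[Theorems B and C]{Beentjes-Calabrese-Rennemo} into the notation set up above. Nothing new about sheaves needs to be proved. The work is to check that the hypotheses of \cite{Beentjes-Calabrese-Rennemo} hold for a local orbifold CY3 of $A_{N}$-type, and that their generating series and variables match ours.

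\emph{Step 1: hypotheses.} Beentjes--Calabrese--Rennemo work with multi-regular classes on an orbifold CY3 $\XX$ and its crepant resolution $Y=\Hilb^{[\OO_{\pt}]}(\XX)$. Here $N_{\mr}(\XX)=N_{\leq 1}(\XX)$, as noted above, so every class is covered. Their argument is for projective targets. For our quasi-projective $\XX$ and $Y$, I would invoke \cite[Section 1.3.2]{Beentjes-Calabrese-Rennemo}: the extension only needs a $(-1)$-shifted symplectic structure on the relevant open moduli substacks. This is supplied by \cite[Corollary~6.2]{BravDyckerhoff2021} combined with \cite[Theorem~4.0.8]{PreygelShiftedSymplectic}. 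One also needs properness, or at least finiteness, of the moduli spaces in fixed class $(\beta,n,v)$. For a local curve or surface this holds because compactly supported curves lie in the zero section together with the compact part of the exceptional fibres, which is proper over the base.

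\emph{Step 2: identifying the series.} I would show that BCR's ``exceptional'' (or $\pi$-relative) series is the Bryan--Steinberg series $\ZZZ^{\BS}_{Y}=\ZZZ^{\PT}_{Y}/\ZZZ_{\exc}$ of \eqref{eqn: defn of BS partition function}, and that their orbifold PT series is $\ZZZ^{\PT}_{\XX}$. The numerical classes are matched by $\FM$, using the following facts:
\begin{itemize}
\item Lemma~\ref{lemma: FM with N_exc and N_0} makes $\FM$ the identity on $\ZZ\oplus\Lambda_{A_{N}}$, with $[\OO_{C_{i}}(-1)]\mapsto -[\OO_{b}\otimes R_{i}]$.
\item Lemmas~\ref{lemma: Ktheory on resolution} and \ref{lemma: basic K-theory lemma} split every class uniquely as $(\beta,n,v)\in N_{1}(\XX)\oplus\ZZ\oplus\LambdaWeightAN$.
\end{itemize}
Hence fixing the coefficient of $Q^{\beta}$ on both sides means fixing the same coset of $N_{1}(\XX)$ in $N_{\leq 1}\otimes\QQ$. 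The monomials $y^{n}\bm{w}^{v}$ then track exactly the $N_{0}(\XX)$-part, which is what BCR's point-class variables track. This gives the first assertion: $\ZZZ^{\BS}_{\beta}$ and $\ZZZ^{\PT}_{\beta}$ are expansions of one rational function $f_{\beta}$ in different regions.

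\emph{Step 3: the denominators.} BCR's denominators come from wall-crossing with a polarization $L$. Each wall contributes the variable of the zero-dimensional class $\beta_{j}\star L=\beta_{j}\otimes L-\beta_{j}$ for the pieces $\beta_{j}$ of an effective decomposition. Writing $\beta_{j}\star L=n_{j}[\OO_{\pt}]-\sum_{i}a_{i,j}[\OO_{b}\otimes R_{i}]$ and applying the identification (b), this class becomes $(n_{j},\sum_{i} a_{i,j}\alpha_{i})$, with monomial $y^{n_{j}}\bm{w}^{a_{j}}$. The squared monomials and the exponent $2i$ in $h_{D}$ are then read off from the shape of the BCR formula.

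I expect the main obstacle to be this bookkeeping: signs (the minus in $[\OO_{C_{i}}(-1)]\mapsto-[\OO_{b}\otimes R_{i}]$), the squares, and the fractional weight-lattice shifts. Because of the known misprint, I would check everything against the version of Theorem~B in \cite[\S7]{Beentjes-Calabrese-Rennemo} rather than the introduction. I would also test the translation on $\ZZZ_{\exc}$, using Lemma~\ref{lem: formula for Zexc}.
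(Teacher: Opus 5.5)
Your plan matches the paper's treatment. The statement is imported directly from \cite[Theorems B and C]{Beentjes-Calabrese-Rennemo}, and the paper supplies two points of justification: every class is multi-regular, and the quasi-projective extension follows from the $(-1)$-shifted symplectic structure of \cite{BravDyckerhoff2021} combined with \cite{PreygelShiftedSymplectic}. Your translation of the denominators through identification (b) is also correct, including the sign: $-\sum_i a_{i,j}[\OO_{b}\otimes R_{i}]$ corresponds to $\sum_i a_{i,j}\alpha_{i}$.

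One claim in your Step~1 is false and should be removed rather than relied on. The moduli spaces of fixed class are \emph{not} proper in general; the paper itself calls $P_{n}(Y,\beta+v)$ ``typically non-proper.'' Three cases show this:
\begin{enumerate}[(1)]
\item For the local orbifold $K3$ surface $\Sorb\times\CC$, every curve translates along the $\CC$ factor.
\item For a local curve with $L_{1}\cong\OO_{\mathcal{C}}$, or any $L_{1}$ with non-zero sections, curves move off the zero section.
\item On $Y$, the exceptional fibres $C_{i}$ move along $B\cong\CC$. Being ``proper over the base'' gives nothing, because $B$ itself is not proper.
\end{enumerate}
Properness is also not needed. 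The invariants are Behrend-weighted Euler characteristics of finite-type schemes. The wall-crossing argument of \cite{Beentjes-Calabrese-Rennemo} only requires the Behrend-function identities, and in the quasi-projective setting these are exactly what the shifted symplectic structure you already cite provides.
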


The following is one of the core results in this paper. We show that
for a local orbifold CY3 of $A_{N}$-type, the rational
function $f_{\beta}$ is invariant under the action of the Weyl group
and under inversion of $y$, the denominators of
$f_{\beta}$ are very simple, and $\ZZZ ^{\BS}_{\beta}$ and
$\ZZZ^{\PT}_{\beta}$ are equal as series:

\begin{theorem}\label{thm: Main Weyl invariance theorem}
Let $\XX$ be a local orbifold CY3 of $A_{N}$-type and let $Y$ be the
crepant resolution.  For $\beta \in N_{1}(\XX ) \cong
N_{1}(Y)^{\natural }$, we have an equality of series
\[
\ZZZ^{\PT}_{\beta}(y,\bm {w})= \ZZZ^{\BS }_{\beta}(y, \bm{w}).
\]
Moreover, this series is the Laurent expansion of a rational function
\[
f_{\beta}(y, \bm{w}) \in \QQ(y)^{y \leftrightarrow y^{-1}} \otimes \ZZ[\LambdaWeightAN]^{W},
\]
which by  Theorem~\ref{thm: crucial Fulton-Harris thm on Weyl
invariants} implies
\[
f_{\beta}(y,\bm{w})\in \QQ (y)^{y\leftrightarrow y^{-1}}\otimes \ZZ
[\Phi_{1},\dotsc ,\Phi_{N}].
\]
\end{theorem}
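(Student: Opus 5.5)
The plan has four steps: (1) show the wall-crossing denominators involve only $y$; (2) deduce that the two series are equal; (3) prove Weyl invariance on the Bryan–Steinberg side; (4) obtain the symmetry $y\leftrightarrow y^{-1}$ from derived duality.

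\emph{Step 1: the denominators.} I would start from Theorem~\ref{thm: restatement of BCR theorem} and show that every denominator $h_D$ is a polynomial in $y$ alone. The line bundle $L=p^{*}\OO_X(A)$ corresponds under the derived McKay correspondence to $\pi^{*}\OO_X(A)$ on $Y$. The intersection $F\star L$ should be computed on $Y$, where it equals $(c_1(\pi^*A)\cdot \mathrm{ch}_2(F))[\OO_y]$. This class has no component along $[\OO_{C_i}(-1)]$, because $\pi^*A$ has degree $0$ on every exceptional fibre and all points of $Y$ are numerically equivalent. By Lemma~\ref{lemma: FM with N_exc and N_0}, $\FM$ sends $[\OO_y]$ to $[\OO_{\pt}]$. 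Hence $a_{i,j}=0$ for every effective summand $\beta_j$, even when $\beta_j$ has a fractional weight-lattice part. It follows that $f_\beta(y,\bm w)\in \QQ(y)\otimes_\ZZ \ZZ[\LambdaWeightAN]$: it is a finite sum of Laurent polynomials in $y$ and $\bm w$ divided by products of factors $(1-y^{2m})^{2i}$.

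\emph{Step 2: equality of the series.} Since $f_\beta$ has poles only at roots of unity in $y$, and its $\bm w$-dependence is polynomial, it has a unique Laurent expansion in $y$ with coefficients in $\ZZ[\LambdaWeightAN]$. I would check that both $\ZZZ^{\PT}_\beta$ and $\ZZZ^{\BS}_\beta$ are Laurent series in $y$, bounded below in $n$ for fixed $\beta$, whose $y^n$-coefficients are finite in $\bm w$. For $\ZZZ^{\BS}_\beta$ this follows because $\pi$-relative pairs with fixed non-exceptional class have boundedly many exceptional components. For $\ZZZ^{\PT}_\beta$ it follows from $\ZZZ^{\PT}_\beta=\ZZZ^{\BS}_\beta\cdot(\ZZZ_{\exc})_0$, where the expansion of Lemma~\ref{lem: formula for Zexc} is used. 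Since both series are expansions of $f_\beta$ of this type, they coincide.

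\emph{Step 3: Weyl invariance (main obstacle).} Weyl invariance is easiest to see on the $\ZZZ^{\BS}$ side. For each $i$, I would use the relative spherical twist along the $\PP^1$-bundle $D_i\to B$, or equivalently the reflection autoequivalence of $D(Y)$ over $X$ familiar from the $A_N$ surface case. This twist acts on $N_{\leq 1}(Y)$ by fixing $N_1(Y)^\natural\oplus\ZZ[\OO_y]$ and acting by the simple reflection $s_i$ on the weight-lattice part; this is visible from Lemma~\ref{lemma: Euler pairing lemma}. It commutes with $\pi_*$. The key claim is that it identifies $\pi$-relative stable pair moduli spaces, or at least preserves their Behrend-weighted Euler characteristics, in classes $(\beta,n,v)$ and $(\beta,n,s_i v)$. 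If a direct identification of moduli spaces fails, I would fall back on the Toda-type description of $\ZZZ^{\BS}$ as a product of ``rank one over $X$'' invariants and exceptional-curve factors. In that description, the $\pi$-relative theory depends only on the perverse heart over $X$, which the reflection preserves. In either form, $\ZZZ^{\BS}_\beta$, and hence $f_\beta$, is $s_i$-invariant for all $i$.

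\emph{Step 4: symmetry in $y$.} For $y\leftrightarrow y^{-1}$ I would apply derived duality $\mathbb{D}=R\mathcal{H}om(-,\OO)[2]$, which underlies Bridgeland's and Toda's rationality results and is built into the BCR wall-crossing. Duality sends $(\beta,n,v)$ to $(\beta,-n,-v)$ and yields $f_\beta(y^{-1},\bm w^{-1})=f_\beta(y,\bm w)$. The longest Weyl element $w_0$ sends $v$ to $-\sigma(v)$, where $\sigma$ is the diagram automorphism, so Weyl invariance reduces the $\bm w^{-1}$ twist to $\sigma$. I would then remove $\sigma$ using that duality also reverses the ordering of the $A_N$ chain of exceptional curves. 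Concretely, the duality composed with twisting by $\OO(D)$, for $D$ a suitable sum of the $D_i$ realising $w_0$, acts as $\sigma$ on the root lattice. The upshot should be $f_\beta(y^{-1},\bm w)=f_\beta(y,\bm w)$. Combined with Step~3 this gives $f_\beta\in\QQ(y)^{y\leftrightarrow y^{-1}}\otimes\ZZ[\LambdaWeightAN]^W$, and Theorem~\ref{thm: crucial Fulton-Harris thm on Weyl invariants} completes the argument. I expect the main difficulty to be Step~3, namely making the reflection-invariance of the $\pi$-relative moduli rigorous in this non-compact, family-over-$B$ setting. The bookkeeping with $\sigma$ in Step~4 is a secondary point that must be checked carefully.
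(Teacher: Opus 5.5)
The genuine gap is in Step 3, and Step 4 contains an error.

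\textbf{Steps 1 and 2.} These are essentially the paper's part (A). Computing $\beta_j\star L$ on $Y$ is a clean way to get $a_{i,j}=0$: there it is a multiple of $[\OO_y]$, and $\FM$ sends $[\OO_y]$ to $[\OO_{\pt}]$. One slip: the factorization you use to bound $\ZZZ^{\PT}_\beta$ describes $\Coef_{Q^\beta}\ZZZ^{\PT}_Y$, not the orbifold series $\ZZZ^{\PT}_\beta$, whose equality with $\ZZZ^{\BS}_\beta$ is what is being proved. The lower bound on $y$-exponents should instead be argued directly on $\XX$.

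\textbf{Step 3 does not prove Weyl invariance.} The relative spherical twist $T_i$ along $D_i\to B$ does fix $\OO_Y$, since $\mathrm{RHom}(\OO_{C_i}(-1),\OO_Y)=0$, and it acts by $s_i$ on classes. But fibrewise it sends $\OO_{C_i}(-1)$ to $\OO_{C_i}(-1)[-1]$. So it maps $\OO_{C_i}(-1)[1]\in{}^{-1}\mathrm{Per}(Y/X)$ to $\OO_{C_i}(-1)\notin{}^{-1}\mathrm{Per}(Y/X)$.
\begin{itemize}
\item It therefore does not preserve the perverse heart, which your fallback assumes it does.
\item It does not carry $\pi$-stable pairs to $\pi$-stable pairs.
\item Comparing counts in a heart and in its tilt needs a wall-crossing argument.
\end{itemize}
The coefficientwise symmetry you aim for is true a posteriori, because $f_\beta$ has denominators only in $y$. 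Your mechanism just does not reach it.

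The missing input is what the paper imports from Buelles--Moreira. For each ruled divisor $D_j$, the series $\ZZZ^{\BM,D_j}_{\delta}$ is the expansion of a rational function satisfying $g^{D_j}_\delta(y,w_j^{-1})=w_j^{-\delta\cdot D_j}g^{D_j}_\delta(y,w_j)$. This identity relates two different expansions; it is not a bijection of moduli spaces. The paper then proceeds as follows:
\begin{itemize}
\item It writes $\ZZZ^{\BS}_\beta=F_j\sum_{v\in\Lambda_j}\ZZZ^{\BM,D_j}_{\beta+v}\bm w^v$, with $F_j=\prod_{\gamma\in R^+\setminus\{\alpha_j\}}M(\bm w^\gamma,-y)^{-1}$.
\item It uses $s_j(\bm w^v)=w_j^{d_j(v)}\bm w^v$, where $d_j(v)=(\beta+v)\cdot D_j$, so the Buelles--Moreira twist cancels term by term.
\item It uses that $s_j$ permutes $R^+\setminus\{\alpha_j\}$, which gives $s_j(F_j)=F_j$.
\item All of this is done inside a completion in which $w_j$ is a rational variable.
\end{itemize}

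\textbf{Step 4 gets the duality action wrong.} Duality does not act on the weight lattice by $v\mapsto -v$. On $\XX$ one has $\DD(\OO_b\otimes R_i)=\OO_b\otimes R_i^\vee[-1]$ and $R_i^\vee=R_{N+1-i}$. Hence $\DD(\alpha_i)=-\alpha_{N+1-i}$, i.e.\ $\DD$ acts by the longest element $w_0$. On $Y$ it even fixes $[\OO_{C_i}(-1)]$, since $R\mathcal{H}om(\OO_{C_i}(-1),\OO_Y)[2]\cong\OO_{C_i}(-1)$. The BCR duality then reads $f_\beta(y,\bm w)=f_\beta(y^{-1},w_0(\bm w))$, and Weyl invariance gives the $y\leftrightarrow y^{-1}$ symmetry immediately. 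There is no diagram automorphism to remove.

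Your intermediate identity $f_\beta(y^{-1},\bm w^{-1})=f_\beta(y,\bm w)$ is false in general. Combined with Weyl invariance, it would force invariance under $\Phi_i\leftrightarrow\Phi_{N+1-i}$. That fails for the local teardrop with $N\geq 2$, where $f_{[\PP^1]}=\Phi_1(\bm w)\,y/(1+y)^2$. So the proposed ``removal of $\sigma$'' by twisting with $\OO(D)$ targets a false statement, and it is unnecessary once the duality action is computed correctly.
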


\begin{proof}
We divide the proof into three steps:
\begin{itemize}
\item [(A)] Equality of series $\ZZZ_{\beta}^{\PT}=\ZZZ_{\beta}^{\BS}$
and $f_{\beta}\in \QQ (y)\otimes \ZZ [\LambdaWeightAN ]$,
\item [(B)] invariance of $f_{\beta}$ under the Weyl group, and
\item [(C)] invariance of $f_{\beta}$ under $y\leftrightarrow y^{-1}$.
\end{itemize}

\smallskip

\textbf{(A)} As above, let $A$ be a generic ample divisor on
the coarse space $X$, let $L = p^{*}\OO_{X}(A)$, and let $\beta \in
N_{1}(Y)^{\natural}\cong N_{1}(\XX )$. Since the orbifold
locus of $\XX$ is necessarily non-proper, no effective decomposition
of $\beta$ can contain this non-proper curve as a component. Thus for
any effective decomposition of $\beta$ as in Theorem \ref{thm:
restatement of BCR theorem}, by the genericity of $A$, we can then
arrange that the intersection product $\beta_{j} \star L$ is supported
only on generic points. Therefore, we know that
$\ZZZ^{\BS}_{\beta}(y, \bm{w})$ and $\ZZZ^{\PT}_{\beta}(y, \bm{w})$
are Laurent expansions of a single rational function $f_{\beta
}(y,\bm {w})$ whose numerator is a Laurent polynomial and whose
denominator only involves $y$ and has factors of the form 
\[
(1 \pm y^{k})^{\ell}
\]
for $k \in \ZZ$ and $\ell \in \ZZ_{\geq 0}$. In particular, we have
\[
f_{\beta}(y, \bm{w}) \in \QQ(y) \otimes \ZZ[\LambdaWeightAN].
\]

Moreover, this then implies the equality of series $\ZZZ
^{\PT}_{\beta}=\ZZZ^{\BS}_{\beta}$ since both series have the
exponent of $y$ bounded from below, and such Laurent expansions of elements
of $ \QQ(y) \otimes\ZZ[\LambdaWeightAN]$ are unique\footnote{In
general, Laurent expansions in several variables depend on the choice
of a linear function on the exponents of the variables (see
\cite[\S~2.5.1]{Beentjes-Calabrese-Rennemo}). However, in our case,  as a consequence
of having the exponents of $w_{i}$ bounded from above and below, there
are only two Laurent expansions, depending on whether the exponent of
$y$ is bounded from above or below.}.

\smallskip
\textbf{(B)} We wish to show
\begin{equation*} 
f_{\beta}(y,g(\bm{w}))=f_{\beta}(y,\bm{w}), \text{ for all $g\in W$.}
\end{equation*}
It suffices to prove invariance under the
generators $s_{j}$ whose transformations on $\bm{w}$ are given by
Equation~\eqref{eqn: simple root reflections}. Fix
$j\in\{1,\ldots,N\}$. For
$v=\sum_{i=1}^{N}v_i\alpha_i$, set
\begin{equation*}
v_0=v_{N+1}=0,
\qquad
d_j(v)=v_{j-1}-2v_j+v_{j+1}.
\end{equation*}

We use the work of Buelles and Moreira \cite{Buelles-Moreira} who
consider the following setting. Suppose that $Y$ is any CY3 containing
a ruled surface $D\subset Y$ such that the fibers of the ruling may
be contracted by a birational morphism $Y\to X$. Then Buelles and
Moreira derive symmetries of the PT partition function of $Y$,
formulated as follows. Let $C$ be the fiber class of the ruling and
for $\delta \in N_{1}(Y)$ consider the Buelles-Moreira series:
\[
\ZZZ^{\BM ,D}_{\delta}(y,w) = \frac{\sum_{n,b} \PT_{\delta +bC,n}(Y)\,
y^{n}w^{b}}{\sum_{n,b} \PT_{bC,n}(Y)\,
y^{n}w^{b}}. 
\]
Their result says that $\ZZZ^{\BM ,D}_{\delta}$ is a Laurent expansion
of a rational function $g^{D}_{\delta }$ satisfying in particular
\[
g^{D}_{\delta}(y,w^{-1})=w^{-\delta \cdot D} g^{D}_{\delta}(y,w).
\]

We wish to apply this result to our setting for the divisor
$D_j\subset Y$ and for a class of the form $\beta+v\in N_1(Y)$,
where $\beta\in N_1(Y)^{\natural}$ and $v\in\Lambda_j$, with
\begin{equation*}
\Lambda_j
=
\left\{
v=\sum_{i=1}^{N}v_i\alpha_i\in\LambdaWeightAN
\text{ such that }0\leq v_j<1
\right\}.
\end{equation*}
Every $u\in\LambdaWeightAN$ can then be written uniquely as
\begin{equation*}
u=v+b\alpha_j,
\qquad
v\in\Lambda_j,\quad b\in\ZZ.
\end{equation*}
Then

\[
\ZZZ^{\BM ,D_{j}}_{\beta +v}(y,w_{j}) = M(w_{j},-y)^{-1} \sum_{b,n}
\PT_{\beta +v+b\alpha_{j},n}(Y)\, y^{n}w_{j}^{b}
\]
where we have used Lemma~\ref{lem: formula for Zexc} to write the
denominator explicitly. Since
\begin{equation*}
(\beta+v)\cdot D_j=d_j(v),
\end{equation*}
the Buelles--Moreira result says that
$\ZZZ^{\BM,D_j}_{\beta+v}$ is the Laurent expansion of a rational
function $g^{D_j}_{\beta+v}$ satisfying
\begin{equation*}
g^{D_j}_{\beta+v}(y,w_j^{-1})
=
w_j^{-d_j(v)}g^{D_j}_{\beta+v}(y,w_j).
\end{equation*}

Using the unique decomposition above, we express the formal series
$\ZZZ_{\beta}^{\BS}$ in terms of the formal series
$\ZZZ^{\BM,D_j}_{\beta+v}$ as follows:
\begin{equation}\label{eqn: ZBS written in terms of the ZBMs}
\ZZZ^{\BS}_{\beta}(y,\bm{w})  = F_{j}(y,\bm{w})\, \sum_{v\in \Lambda_{j}}
\ZZZ^{\BM ,D_{j}}_{\beta +v}(y,w_{j}) \,  \bm {w}^{v}
\end{equation}
where
\[
F_{j} = \frac{M(w_{j},-y)}{\ZZZ_{\exc}(y,\bm {w})}.
\]

Equation~\eqref{eqn: ZBS written in terms of the ZBMs} is an equality
of formal Laurent series.  We would like to interpret it in a ring
where the variable $w_{j}$ is a rational function variable.  To this
end, set
\begin{equation*}
K_j=\QQ((y))\big(w_j^{\frac{1}{N+1}}\big)
\end{equation*}
and define
\begin{equation*}
\widehat{\mathcal A}_j
=
K_j
\llbracket
w_1^{\frac{1}{N+1}},\ldots,
\widehat{w_j^{\frac{1}{N+1}}},
\ldots,w_N^{\frac{1}{N+1}}
\rrbracket
\left[
w_i^{-\frac{1}{N+1}}:i\neq j
\right].
\end{equation*}
Thus $\widehat{\mathcal A}_j$ consists of formal Laurent series in
the variables $w_i^{1/(N+1)}$, $i\neq j$, whose exponents are
bounded from below in each of these variables, with coefficients
rational in $w_j^{1/(N+1)}$ and Laurent series in $y$. The usual
effectivity bound on the PT classes implies that all the series below
belong to this completion.

For each $v\in\Lambda_j$, the Buelles--Moreira series
$\ZZZ^{\BM,D_j}_{\beta+v}(y,w_j)$ is the Laurent expansion at $w_j=0$ of the rational function
$g^{D_j}_{\beta+v}(y,w_j)$. Let
\begin{equation*}
\iota_j:
\QQ((y))\big(w_j^{\frac{1}{N+1}}\big)
\longrightarrow
\QQ((y))\big(\big(w_j^{\frac{1}{N+1}}\big)\big)
\end{equation*}
denote Laurent expansion at $w_j=0$, and extend $\iota_j$
coefficientwise to $\widehat{\mathcal A}_j$. This extension remains
injective.

Note that $w_j^{v_j}\in K_j$ for every $v\in\Lambda_j$.
Define
\begin{equation*}
H_j(y,\bm w)
=
\sum_{v\in\Lambda_j}
g^{D_j}_{\beta+v}(y,w_j)\bm w^v
\in\widehat{\mathcal A}_j.
\end{equation*}
Equation~\eqref{eqn: ZBS written in terms of the ZBMs}, together with the fact that
$\ZZZ^{\BS}_\beta$ is the Laurent expansion of $f_\beta$, gives
\begin{equation*}
\iota_j\big(f_\beta(y,\bm w)\big)
=
\iota_j\big(F_j(y,\bm w)H_j(y,\bm w)\big).
\end{equation*}
Since $\iota_j$ is injective, it follows that
\begin{equation}\label{eqn: f beta equals Fj Hj}
f_\beta(y,\bm w)
=
F_j(y,\bm w)H_j(y,\bm w)
\end{equation}
in $\widehat{\mathcal A}_j$.

The reflection $s_j$ acts on $\widehat{\mathcal A}_j$. Indeed,
Equation~\eqref{eqn: simple root reflections} gives
\begin{equation*}
s_j(\bm w^v)
=
w_j^{d_j(v)}\bm w^v.
\end{equation*}
Thus $s_j$ changes only the coefficient in $K_j$, leaving all
exponents in the completed variables unchanged. On the other hand,
the Buelles--Moreira functional equation gives
\begin{equation*}
g^{D_j}_{\beta+v}(y,w_j^{-1})
=
w_j^{-d_j(v)}g^{D_j}_{\beta+v}(y,w_j).
\end{equation*}
Consequently,
\begin{equation*}
\begin{split}
s_j\left(
g^{D_j}_{\beta+v}(y,w_j)\bm w^v
\right)
&=
g^{D_j}_{\beta+v}(y,w_j^{-1})
w_j^{d_j(v)}\bm w^v\\
&=
g^{D_j}_{\beta+v}(y,w_j)\bm w^v.
\end{split}
\end{equation*}
It follows term by term that
\begin{equation*}
s_j(H_j)=H_j.
\end{equation*}

We also have
\begin{equation*}
F_j(y,\bm w)
=
\prod_{\gamma\in R^+\setminus\{\alpha_j\}}
M(\bm w^\gamma,-y)^{-1}.
\end{equation*}
Every positive root $\gamma\neq\alpha_j$ contains at least one
simple root $\alpha_i$ with $i\neq j$. Hence every nonconstant
term in the corresponding factor has positive degree in at least one
of the completed variables, so this product is well-defined in
$\widehat{\mathcal A}_j$. Moreover by \cite[\S10.2, Lemma~B,
p.~50]{HumphreysLieAlgebras}, $s_j$ permutes
$R^+\setminus\{\alpha_j\}$, and therefore
\begin{equation*}
s_j(F_j)=F_j.
\end{equation*}
Applying $s_j$ to Equation~\eqref{eqn: f beta equals Fj Hj}, we obtain
\begin{equation*}
f_\beta(y,s_j(\bm w))
=
s_j(F_j)s_j(H_j)
=
F_jH_j
=
f_\beta(y,\bm w)
\end{equation*}
in $\widehat{\mathcal A}_j$. Finally,
\begin{equation*}
f_\beta(y,\bm w)
\in
\QQ(y)\otimes\ZZ[\LambdaWeightAN],
\end{equation*}
and the natural map from this uncompleted group algebra into
$\widehat{\mathcal A}_j$ is injective. Thus
\begin{equation*}
f_\beta(y,s_j(\bm w))
=
f_\beta(y,\bm w)
\end{equation*}
in $\QQ(y)\otimes\ZZ[\LambdaWeightAN]$.

\smallskip \textbf{(C)} It remains for us to prove that
$f_{\beta}(y^{-1},\bm{w})=f_{\beta}(y,\bm{w})$. We may prove this
using Beentjes, Calabrese, and Rennemo's duality symmetry
\cite[Proposition 7.18]{Beentjes-Calabrese-Rennemo}. Their result is a
symmetry of $f_{\beta}(y,\bm{w})$ induced by the dualizing functor
$\DD : N_{\leq 1}(\XX ) \to N_{\leq 1}(\XX )$ defined by
\[
\DD (F) = R\Hom (F,\OO_{\XX})[2]. 
\]
We claim that the action of $\DD $ on $(\beta ,n,v)\in N_{\leq 1}(\XX )\subset
N_{1}(\XX )\oplus \ZZ \oplus \LambdaWeightAN$ is given by
\[
\DD (\beta ,n,v) = (\beta ,-n, w_{0}(v))
\]
where $w_{0}\in W$ is the longest element in the Weyl group. Indeed,
since our splitting is orthogonal, $\DD$ acts separately on each
factor. The action on the first two factors is the same as the usual
duality for (non-orbifold) DT theory. The action on $\LambdaWeightAN$
follows from a straightforward computation:
\[
\DD (\alpha_{i}) = \DD (-[\OO_{b}\otimes R_{i}]) =[\OO_{b}\otimes
R^{\vee }_{i}] =[\OO_{b}\otimes R_{N+1- i}] = -\alpha_{{N+1-i}}.
\]
Then \cite[Proposition~7.18]{Beentjes-Calabrese-Rennemo} reads
\[
f_{\beta}(y,\bm{w}) = f_{\beta}(y^{-1},w_{0}(\bm{w})). 
\]
Since we have already proved that $f_{\beta}(y,\bm{w})$ is invariant
under the action of the Weyl group, the equality
\[
f_{\beta}(y,\bm{w}) = f_{\beta}(y^{-1},\bm{w})
\]
follows.
\end{proof}

\subsection{The Gopakumar-Vafa Invariants for $\XX$}\label{sec: The GV invariants of XX}

Having established in Theorem~\ref{thm: Main Weyl invariance theorem}
that $\ZZZ^{\PT}_{\beta}$ (and $\ZZZ^{\BS}_{\beta}$) is the Laurent
expansion of a rational function 
\[
f_{\beta}\in \QQ (y)^{y\leftrightarrow y^{-1}}\otimes \ZZ [\Phi_{1},\dotsc ,\Phi_{N}]
\]
we will no longer notationally distinguish between
$\ZZZ^{\PT}_{\beta}$ and $f_{\beta}$. In particular, we will regard
the coefficient of $Q^{\beta}$ in the series $\log
\ZZZ^{\PT}_{\XX}(Q,y,\bm{w})$ as an element of $\QQ
(y)^{y\leftrightarrow y^{-1}}\otimes \ZZ [\Phi_{1},\dotsc
,\Phi_{N}]$. This allows us to make the following

\begin{definition}\label{defn: Main GVPT Defn}
Let $\XX$ be a local orbifold CY3 of $A_{N}$-type. For a function
$f(\bm{w})$, we define $\Psi_{d}(f)(\bm{w}) = f(w_{1}^{d}, \ldots,
w_{N}^{d})$. The $\bm{m}$-graded GV invariants $\nrefined$ are defined by
the formula
\begin{equation}\label{eqn: Main GV Conjecture via PT}
\begin{split}
& \log \ZZZ_{\XX}^{\PT}(Q, y, \bm{w}) = \sum_{\substack{ \beta \in N_{1}(\XX) \\ \beta \neq 0}} \,\, \sum_{g \in \ZZ } \,\, \sum_{d \geq 1} \,\,  \Psi_{d}\big(N_{\beta, g}(\bm{w})\big) \psi_{-(-y)^{d}}^{g-1} \, \frac{Q^{d \beta}}{d} \\
& N_{\beta, g}(\bm{w}) = \sum_{\bm{m} \geq 0} \nrefined \, \prod_{i=1}^{N} \Phi_{i}(\bm{w})^{m_{i}} \in \QQ[\LambdaWeightAN]^{W}
\end{split}
\end{equation}
where $\psi_{x} = (x^{\frac{1}{2}} + x^{-\frac{1}{2}})^{2}$.  
\end{definition}

By M\"obius inversion, one can see that the above formula uniquely
defines the numbers $\nrefined$. A priori, $\nrefined$ could be
non-zero for negative $g$. However we have the following, which also
establishes Theorem
\ref{thm: main structure thm of the GV invariants 2} from the
Introduction.
\begin{theorem}\label{thm: main structure thm of the GV invariants}
Let $\XX$ be a local orbifold CY3 of $A_{N}$-type with crepant
resolution $Y$. The $\bm{m}$-graded GV invariants are given in
terms of the ordinary GV invariants of $Y$ by the formula
\begin{equation}\label{eqn: GV invariants of XX and Y}
\sum_{\bm{m} \geq 0} \nrefined \,
\prod_{i=1}^{N} \Phi_{i}(\bm{w})^{m_{i}} = \sum_{v \in
\LambdaWeightAN} n^{Y}_{g}(\beta +v) \, \bm{w}^{v}. 
\end{equation}
It follows that $\nrefined$ are all integers, and for fixed $\beta \in
N_{1}(\XX)\cong N_{1}(Y)^{\natural}$ we have $\nrefined = 0$ for all but finitely many pairs
$(g, \bm{m})$ with $g \geq 0$ and $\bm{m} \in \ZZ_{\geq 0}^{N}$.
\end{theorem}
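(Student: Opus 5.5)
We would prove the formula by comparing the logarithm of the PT series of $\XX$ with the logarithm of the PT series of $Y$, using the factorization $\ZZZ^{\PT}_{Y} = \ZZZ_{\exc}\cdot \ZZZ^{\BS}_{Y}$ from \eqref{eqn: defn of BS partition function}. By Theorem~\ref{thm: Main Weyl invariance theorem}, $\ZZZ^{\PT}_{\beta}=\ZZZ^{\BS}_{\beta}$ for every $\beta\in N_{1}(\XX)\cong N_{1}(Y)^{\natural}$. Hence $\ZZZ^{\PT}_{\XX}(Q,y,\bm w)=\ZZZ^{\PT}_{Y}(Q,y,\bm w)/\ZZZ_{\exc}(y,\bm w)$ as formal series. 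Taking logarithms gives
\[
\log \ZZZ^{\PT}_{\XX} = \log \ZZZ^{\PT}_{Y} - \log \ZZZ_{\exc}.
\]

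For the right-hand side we apply the ordinary GV expansion \eqref{eqn: Ordinary GV/PT formula} to $Y$, which holds in the form where GV invariants are defined by it. We write each nonzero class of $N_{1}(Y)$ uniquely as $\beta+v$ with $\beta\in N_{1}(Y)^{\natural}$ and $v\in\LambdaWeightAN$, using Lemma~\ref{lemma: Ktheory on resolution}, with $Q^{\beta+v}\mapsto Q^{\beta}\bm w^{v}$. The key observation is that $Q^{k(\beta+v)}$ becomes $Q^{k\beta}\bm w^{kv}$, and $\bm w^{kv}=\Psi_{k}(\bm w^{v})$. So the terms with $\beta\neq 0$ contribute
\[
\sum_{\beta\neq0}\sum_{g}\sum_{k\ge1}\Psi_{k}\Big(\sum_{v}n^{Y}_{g}(\beta+v)\bm w^{v}\Big)\psi^{g-1}_{-(-y)^{k}}\frac{Q^{k\beta}}{k}.
\]
The terms with $\beta=0$, namely $v\in\Lambda_{A_N}$ nonzero, are the purely exceptional classes. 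Their total is exactly $\log \ZZZ_{\exc}$, so they cancel. To confirm this we check directly from Lemma~\ref{lem: formula for Zexc} that $\log M(x,-y)=\sum_{k}\psi^{-1}_{-(-y)^{k}}x^{k}/k$. This shows that $\ZZZ_{\exc}$ is itself the GV expansion with $n^{Y}_{0}(\pm\gamma)=1$ for positive roots $\gamma$. Here we must be careful with the signs and with the identification of classes of curves $\OO_{C}(-1)$. Comparing with Definition~\ref{defn: Main GVPT Defn} and using uniqueness (Möbius inversion in $d$), we get $N_{\beta,g}(\bm w)=\sum_{v}n^{Y}_{g}(\beta+v)\bm w^{v}$, which is \eqref{eqn: GV invariants of XX and Y}.

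The main obstacle is the convergence and bookkeeping, namely making these manipulations legitimate. The formal series live in completions, while $N_{\beta,g}$ is supposed to be an honest element of $\ZZ[\LambdaWeightAN]^{W}$. We would first invoke finiteness of ordinary GV invariants of $Y$: for fixed $\beta$, only finitely many $(g,v)$ have $n^{Y}_{g}(\beta+v)\neq0$. This holds because the classes $\beta+v$ are all effective with bounded degree against an ample class pulled back from $X$, and the fibers of $\pi$ contribute via a bounded family, so the sum over $v$ is finite. We would then use the known genus positivity and finiteness for $Y$, valid after reducing to a projective compactification or to the Pardon/Ionel–Parker results cited in the introduction. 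Given finiteness, the right-hand side is a finite element of $\ZZ[\LambdaWeightAN]$. It is Weyl invariant because it equals $N_{\beta,g}$, which by Theorem~\ref{thm: Main Weyl invariance theorem} lies in the invariant ring. Alternatively, Weyl invariance can be obtained directly by inverting the relation and noting that $\Psi_{k}$ commutes with $W$.

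Finally, by Theorem~\ref{thm: crucial Fulton-Harris thm on Weyl invariants}, an integral Weyl-invariant element has a unique expression as an integer polynomial in the $\Phi_{i}$. Hence all $\nrefined$ are integers. Only finitely many $(g,\bm m)$ are nonzero, and all of these have $g\ge0$, since the $Y$-side vanishes for $g<0$ and for all but finitely many $g$.
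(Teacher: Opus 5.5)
Your argument is the paper's proof. You use Theorem~\ref{thm: Main Weyl invariance theorem} to get $\log\ZZZ^{\PT}_{\XX}=\log\ZZZ^{\PT}_{Y}-\log\ZZZ_{\exc}$. You expand the right side by the ordinary GV formula with $Q^{k(\beta+v)}\mapsto Q^{k\beta}\Psi_k(\bm{w}^{v})$, and you match with Definition~\ref{defn: Main GVPT Defn} by M\"obius inversion (the paper then separates genera using $\QQ(y)^{y\leftrightarrow y^{-1}}=\QQ(\psi_y)$). Your integrality deduction via Theorem~\ref{thm: crucial Fulton-Harris thm on Weyl invariants} is also fine.

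\textbf{The finiteness step is argued incorrectly.} Effectivity plus bounded degree against a class pulled back from $X$ does not bound $v$. If $\beta+v$ is effective, so is $\beta+v+k\alpha_i$ for every $k\geq 0$ (add fibers $C_i$), and all of these have the same degree against $\pi^{*}A$. So finiteness of $\{v : n^Y_g(\beta+v)\neq 0\}$ cannot be an a priori input on the $Y$ side. In the paper it is an \emph{output}, inherited from the orbifold side:
\begin{itemize}
\item By step (A) of Theorem~\ref{thm: Main Weyl invariance theorem}, the BCR denominators involve only $y$, because each $\beta_j\star L$ is supported at generic points.
\item Hence $\Coef_{Q^\beta}\ZZZ^{\PT}_{\XX}$ is a Laurent polynomial in $\bm{w}$ with coefficients in $\QQ(y)$. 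The same holds for $\Coef_{Q^\beta}\log\ZZZ^{\PT}_{\XX}$ and for its M\"obius inversion.
\item Comparing coefficients of each $\bm{w}^{v}$ separately needs no finiteness. It shows that $\sum_{g}n^Y_g(\beta+v)\psi_y^{g-1}$ vanishes for all but finitely many $v$.
\item Combined with genus-finiteness for each fixed class on $Y$, this gives finiteness in $(g,v)$, hence in $(g,\bm{m})$.
\end{itemize}
Your remark that the right-hand side ``equals $N_{\beta,g}$'' already contains this. Drop the effectivity argument and run the comparison in that direction.

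\textbf{A smaller sign slip.} In fact $\log M(x,-y)=-\sum_{k\geq 1}\psi^{-1}_{-(-y)^k}x^k/k$. With $\ZZZ_{\exc}=\prod_{\gamma\in R^+}M(\bm{w}^{\gamma},-y)$, the exceptional invariants are therefore $n^Y_0(\gamma)=-1$ for positive roots $\gamma$ only, not $+1$ on $\pm\gamma$. This is consistent with the convention $n^{\overline{S}}_g=-n^Y_g$ in the $K3$ section.

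This check is unnecessary anyway. Since $\ZZZ^{\PT}_Y=\ZZZ_{\exc}\cdot(1+O(Q))$, the $Q^0$-part of $\log\ZZZ^{\PT}_Y$ is $\log\ZZZ_{\exc}$. The $Q^0$-terms of the GV expansion are exactly those with $\beta=0$, so the cancellation is automatic.
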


\begin{proof}
By Theorem~\ref{thm: Main Weyl invariance theorem} and
Equation~\eqref{eqn: defn of BS partition function}, we have
\begin{equation}\label{eqn: log PT/BS equality}
\log \ZZZ_{\XX}^{\PT}(Q, y, \bm{w}) = \log \ZZZ^{\BS}(Q,y,\bm {w})=
\log \ZZZ_{Y}^{\PT}(Q, y, \bm{w}) - \log \ZZZ_{\exc}(y, \bm{w})
\end{equation}
where the equality is as formal power series in $Q$ whose coefficients
are elements of the ring $\QQ(y)^{y \leftrightarrow y^{-1}} \otimes
\ZZ[\LambdaWeightAN]^{W}$. On both sides of the equality, the
coefficient of $Q^{0}$ vanishes. By Definition \ref{defn: Main GVPT
Defn} we have
\begin{equation}\label{eqn: GV expansion in proof}
\log \ZZZ_{\XX}^{\PT}(Q, y, \bm{w}) = \sum_{\substack{ \beta \in
N_{1}(\XX) \\ \beta \neq 0}} \; \sum_{\substack{g \in \ZZ \\ d
>0}}\;\;    \sum_{\bm{m} \in \ZZ^{N}_{\geq 0}} \nrefined \prod_{i=1}^{N}
\Phi_{i}(\bm{w}^{d})^{m_{i}} \psi_{-(-y)^{d}}^{g-1} \frac{Q^{d
\beta}}{d}
\end{equation}
where $\bm{w}^{d} = (w_{1}^{d}, \ldots, w_{N}^{d})$. We similarly have
the ordinary GV expansion of $Y$
\begin{equation}\label{eqn: GV expansion on Y in proof}
\begin{split}
\log \ZZZ_{Y}^{\PT}(Q, y, \bm{w}) & - \log \ZZZ_{\exc}(y, \bm{w}) \\
& = \sum_{ \substack{ \beta \in N_{1}(Y)^{\natural} \\ \beta \neq 0}}
\; \sum_{v \in \LambdaWeightAN}\; \sum_{\substack{g \geq 0\\ d >0}}
n_{g}^{Y}( \beta + v) \psi_{-(-y)^{d}}^{g-1} \bm{w}^{dv} \frac{Q^{d
\beta}}{d}.
\end{split}
\end{equation}
For fixed genus $g \in \ZZ$, define
\begin{equation*}
\begin{split}
& A_{\XX, g}(Q, \bm{w}) = \sum_{\substack{ \beta \in N_{1}(\XX) \\ \beta \neq 0}} \sum_{ \bm{m} \in \ZZ_{\geq 0}^{N}}\nrefined \prod_{i=1}^{N} \Phi_{i}(\bm{w})^{m_{i}} Q^{\beta}       \\
& A_{Y, g}(Q, \bm{w}) = \sum_{ \substack{ \beta \in
N_{1}(Y)^{\natural} \\ \beta \neq 0}} \sum_{v \in
\LambdaWeightAN}n_{g}^{Y}( \beta +v) \bm{w}^{v} Q^{\beta}
\end{split}
\end{equation*}
and observe that from the vanishing of the ordinary GV invariants for negative genus, we have $A_{Y, g}(Q, \bm{w}) = 0$ if $g < 0$. 

By (\ref{eqn: GV expansion in proof}) and (\ref{eqn: GV expansion on Y in proof}), we therefore see that the equality in (\ref{eqn: log PT/BS equality}) can be written as
\begin{equation}\label{eqn: equality in terms of Adams operator}
\sum_{g \in \ZZ} \sum_{d \geq 1} \frac{1}{d} \widetilde{\Psi}_{d}(A_{\XX, g}\psi_{y}^{g-1})(Q, y, \bm{w}) = \sum_{g \geq 0} \sum_{d \geq 1} \frac{1}{d} \widetilde{\Psi}_{d}(A_{Y, g}\psi_{y}^{g-1})(Q, y, \bm{w})
\end{equation}
where for $d \geq 1$ we define the Adams operator $\widetilde{\Psi}_{d}(f)(Q, y, \bm{w}) \coloneqq f(Q^{d}, -(-y)^{d}, \bm{w}^{d})$ acting on a formal series $f(Q, y, \bm{w})$. In particular, note that $\widetilde{\Psi}_{d}(\psi_{y}^{g-1}) = \psi_{-(-y)^{d}}^{g-1}$. Applying the M\"{o}bius inversion operator
\[
\sum_{k \geq 1} \frac{\mu(k)}{k} \widetilde{\Psi}_{k}
\]
to both sides of (\ref{eqn: equality in terms of Adams operator}) and using the standard property of the M\"{o}bius function:
\[
\sum_{k \mid n} \mu(k)
=
\begin{cases}
1, & n=1\\
0, & n>1
\end{cases}
\]
we conclude that 
\[
\sum_{g \in \ZZ} A_{\XX, g}(Q, \bm{w}) \psi_{y}^{g-1} = \sum_{g \geq 0} A_{Y, g}(Q, \bm{w}) \psi_{y}^{g-1}.
\]
This is an equality of power series in $Q$ with coefficients in the
ring $\QQ(y)^{y \leftrightarrow y^{-1}} \otimes
\ZZ[\LambdaWeightAN]^{W}$. Since $\QQ(y)^{y \leftrightarrow y^{-1}} =
\QQ(\psi_{y})$, a comparison of coefficients gives $A_{\XX,g}(Q,
\bm{w}) = A_{Y, g}(Q, \bm{w})$ for all $g$. Finally, recalling that
$N_{1}(\XX)\cong N_{1}(Y)^{\natural}$, we identify the coefficient
of $Q^{\beta}$ in $A_{\XX, g}$ with the coefficient of
$Q^{\beta }$ in $A_{Y, g}$, which completes the proof.
\end{proof}

\subsection{Local Orbifold $K3$ Surfaces}\label{subsec: constr of local orbifold K3s}

As one of our main collections of examples in this paper, we compute
the $\bm{m}$-graded GV invariants for the local orbifold $K3$ surfaces
$\XX = \Sorb \times \CC$ of Definition \ref{defn: orbifold K3
Definition 2}. Recall that we assume $\Sorb$ is an orbifold $K3$
surface satisfying the conditions:
\begin{enumerate}[(i)]
\item The coarse space $S$ contains an isolated singular point of $A_{N}$-type
\item If $\pi : \overline{S} \to S$ is the minimal resolution, we have
\[
\Pic(\overline{S}) = \pi^{*}\Pic(S) \oplus \Lambda_{A_{N}}(-1).
\]
\end{enumerate} 
Let $Y = \overline{S} \times \CC$ be the crepant resolution of the
coarse space of $\XX$. Recalling that we always work with compactly
supported numerical $K$-theory, we have 
\begin{itemize}
\item $N_{1}(Y)^{\natural} = \pi^{*}\Pic(S)$
\item $N_{1}(Y) = \Pic(\overline{S}) = \pi^{*}\Pic(S) \oplus \Lambda_{A_{N}}(-1)$
\item $N_{1}(\XX) = p^{*}\Pic(S)$
\end{itemize}
where $p : \Sorb \to S$ is the canonical map to the coarse space. Our
assumption on $\Pic(\overline{S})$ ensures that we have an integral
splitting $N_{1}(Y) = N_{1}(Y)^{\natural} \oplus \Lambda_{A_{N}}(-1)$
with no need to introduce denominators. It follows from Definition
\ref{defn: orbifold K3 Definition 2} that $S$ is a singular $K3$
surface with an isolated singular point of $A_{N}$-type and all
algebraic Weil divisors on $S$ are Cartier.

As a result of using the Behrend-weighted Euler characteristic to
define the PT invariants on both $\XX$ and $Y$, there is a global
minus sign that we want to define away:
\[
\Snrefined \coloneqq -\nrefined, \quad \quad \quad n_{g}^{\overline{S}}(\beta) \coloneqq -n_{g}^{Y}(\beta).
\]
The invariants $n_{g}^{Y}(\beta)$ are determined by \cite[Theorem 6.3]{MaulikThomas2018}, where one indeed observes a minus sign added to the usual KKV formula. With our definition, we have
\begin{equation}\label{eqn: main ordinary KKV formula}
\sum_{g \geq 0 } n_{g}^{\overline{S}}(\beta) \psi_{y}^{g-1} = -\Coef_{q^{\frac{1}{2}\beta^{2}}} \bigg( \frac{1}{\phi_{10, 1}(q, -y)}\bigg),
\end{equation}
and in particular, the genus-zero invariants are all non-negative with this convention. 

We now give the proof of Theorem \ref{thm: orbifold KKV formula} as stated in the Introduction. In particular, we compute the $\bm{m}$-graded GV invariants $\Snrefined$ for the local orbifold $K3$ surfaces under consideration. 

\begin{proof}[Proof of Theorem \ref{thm: orbifold KKV formula}]

Starting with the equation (\ref{eqn: GV invariants of XX and Y 2}) we
multiply both sides by $\psi_{y}^{g-1}$, and sum over $g \geq 0$ to
get
\[
\sum_{g \geq 0} \sum_{\bm{m} \geq 0} \Snrefined \psi_{y}^{g-1}
\prod_{i=1}^{N} \Phi_{i}(\bm{w})^{m_{i}} = \sum_{v \in
\Lambda_{A_{N}}} \sum_{g \geq 0} n_{g}^{\overline{S}}(\beta +v)
\psi_{y}^{g-1} \bm{w}^{v}.
\]
Note that the root lattice appears instead of the
weight lattice precisely due to our assumption on
$\Pic(\overline{S})$.

We can simplify the right-hand side above using the ordinary
Katz-Klemm-Vafa formula for smooth $K3$ surfaces \cite{KKV,PandharipandeThomas2016}. In
particular, we use the remarkable property that
\[
n_{g}^{\overline{S}}(\beta +v)
\]
only depends on the curve class through the self-intersection. We have 
\[
(\beta +v)^{2} = \beta^{2} - \langle v, v \rangle
\]
where $\langle \cdot, \cdot \rangle$ is the positive definite bilinear
pairing for the $A_{N}$ root lattice. Plugging the KKV formula
(\ref{eqn: main ordinary KKV formula}) into the formula involving the
$\bm{m}$-graded GV invariants yields
\begin{equation*}
\begin{split}
\sum_{g \geq 0} \sum_{\bm{m} \geq 0} \Snrefined  \psi_{y}^{g-1} \prod_{i=1}^{N} \Phi_{i}(\bm{w})^{m_{i}} & = -\sum_{v \in \Lambda_{A_{N}}} \Coef_{q^{\frac{1}{2}(\beta^{2} - \langle v, v \rangle)}} \bigg( \frac{1}{\phi_{10, 1}(q, -y)}\bigg) \bm{w}^{v} \\
& = -\Coef_{q^{\frac{1}{2} \beta^{2}}}\bigg( \frac{\Theta_{A_{N}}(q, \bm{w})}{\phi_{10, 1}(q, -y)} \bigg) 
\end{split}
\end{equation*}
where the Jacobi theta function $\Theta_{A_{N}}(q, \bm{w})$ is defined in (\ref{eqn: AN theta function}). This completes the proof. 
\end{proof}

\section{Gopakumar-Vafa Invariants Through Orbifold GW Theory}\label{sec: orbifold GW section}

\subsection{Preliminaries on Orbifold Cohomology and GW Theory}\label{subsection: orbifold GW theory preliminaries}

Gromov-Witten (GW) theory is the mathematical formulation of
topological string theory, and is concerned with enumerating stable
maps from curves into a target space. In the case of an orbifold
target space $\XX$, the foundations of the theory were developed in
\cite{Abramovich-Graber-Vistoli-2008,OrbiGW-Abramovich-Vistoli,Chen-Ruan}. The
notion of a stable map is replaced by that of a \emph{twisted stable
map}, which is a representable morphism
\[
f : \CCC \to \XX
\]
whose domain is a twisted curve $\CCC$. There is a Deligne-Mumford stack $\overline{M}_{g,n}(\XX, \beta)$ parameterizing twisted stable maps of genus $g$ with $n$ marked points, representing the compactly supported class $\beta \in H_{2}(X, \QQ)$ on the coarse space. 

Associated to the $i$-th marked point, we have an evaluation map taking values not in $\XX$, but rather in the \emph{rigidified inertia stack}
\[
\text{ev}_{i} : \overline{M}_{g,n}(\XX, \beta) \to \overline{\mathcal{I}} \XX. 
\]
As usual, we are interested in pulling back cohomology classes by the evaluation maps. The cohomology of the rigidified inertia stack was studied by Chen-Ruan \cite{Chen-Ruan}, and has come to be known as \emph{orbifold cohomology}. As a vector space, we have
\[
H^{*}_{\orb}(\XX, \QQ) = H^{*}(\overline{\mathcal{I}} \XX, \QQ)
\]
though the grading includes a shift by ages. 

From here onward, let $\XX$ be a local orbifold CY3 of $A_{N}$-type (see Definition \ref{defn: new local orbifold CY3 defn}). The degree $2$ orbifold cohomology is given by
\[
H_{\orb}^{2}(\XX, \QQ) \isom H^{2}(X, \QQ) \bigoplus \QQ^{N}.
\]
The factor $\QQ^{N}$ is referred to as the \emph{twisted sector}. In
the above result we have used that the $A_{N}$ orbifold structure
implies that the components of the twisted sector are indexed by $\{1,
\ldots, N\}$ and are all of age $1$.

In general, the GW invariants of the orbifold $\XX$ are multilinear
functions $\langle \cdots \rangle_{g, \beta}^{\XX}$ on elements of
$H^{*}_{\orb}(\XX, \QQ)$ pulled back via the evaluation maps and
integrated against the virtual fundamental class. However, since the
orbifolds we consider are non-compact, we define the GW invariants
using Graber-Pandharipande localization \cite{GraberPandharipande}, as
we now recall.

Let $\XX$ be an orbifold with a $\CC^{*}$-action of the type described
in Theorem~\ref{thm: intro version of GW/GV formula}. The
$\CC^{*}$-action lifts to the moduli space of twisted stable maps, and in all
cases, the $\CC^{*}$-fixed locus $\overline{M}_{g,n}(\XX, \beta)^{\CC^{*}}$ is
proper. Graber-Pandharipande construct a virtual class
$[\overline{M}_{g,n}(\XX, \beta)^{\CC^{*}}]^{\vir}$ and define invariants as
follows:

\begin{definition}
Let $\XX$ be an orbifold CY3 with a $\CC^{*}$-action given by one of the
types in Theorem~\ref{thm: intro version of GW/GV formula}. Let $\gamma_{1}, \ldots,
\gamma_{N}$ be canonical generators of the twisted sector
$\QQ^{N}$. The \emph{GW invariants}, with insertions from the twisted
sector, are defined by
\[
\langle \gamma_{i_{1}},\dotsc , \gamma_{i_{n}}
\rangle_{g,\beta}^{\XX} = \int_{[\overline{M}_{g,n}(\XX,
\beta)^{\CC^{*}}]^{\vir}} \frac{j^{*}\text{ev}_{1}^{*} (\gamma_{i_{1}}) \cup
\ldots \cup j^{*}\text{ev}_{n}^{*}
(\gamma_{i_{n}})}{e(N^{\vir})},
\]
where $j:\overline{M}_{g,n}(\XX,
\beta)^{\CC^{*}}\into \overline{M}_{g,n}(\XX,
\beta) $ is inclusion and $e(N^{\vir})$ is the equivariant Euler class of the virtual
normal bundle $N^{\vir}$. Then since $\langle \gamma_{i_{1}},\dotsc , \gamma_{i_{n}}
\rangle_{g,\beta}^{\XX}$ is symmetric in its entries and $i_{k}\in
\{1,\dotsc ,N \}$, we collect terms and write the insertion as a
monomial
\[
\langle \gamma_{1}^{m_{1}}\dotsb  \gamma_{N}^{m_{N}}
\rangle_{g,\beta}^{\XX}
\]
where $n=\sum_{j=1}^{N}m_{j}$ and $m_{j}\geq 0$.
\end{definition}

\begin{definition}\label{defn: full orbi GW potential}
Let $\bm{x} = (x_{1}, \ldots, x_{N})$ be a set of formal variables tracking twisted sector insertions. The full orbifold GW potential is given by:
\[
 \F^{\GW}_{\XX}(Q, \lambda, \bm{x}) =  \sum_{\substack{\beta \in H_{2}(X, \QQ) \\ \beta \neq 0}}\, \sum_{g \geq 0}
 \F_{\XX,\beta }^{g}( \bm{x})\, Q^{\beta } \lambda^{2g-2}  
\]
where
\[
 \F_{\XX, \beta}^{g}(\bm{x})  = \sum_{\bm{m} } \langle
 \gamma_{1}^{m_{1}} \cdots \gamma_{N}^{m_{N}} \rangle_{g, \beta}^{\XX}
 \prod_{j=1}^{N} \frac{x_{j}^{m_{j}}}{m_{j}!}.
\]
\end{definition}

\subsection{The GW Crepant Resolution Conjecture}

Let $\pi : Y \to X$ be the crepant resolution of the coarse space of
$\XX$, see Section \ref{subsection: Derived McKay and Ktheory}. Recall
the exceptional locus of $\pi$ consists of an $A_{N}$-configuration of
ruled surfaces $D_{1}, \ldots, D_{N}$ over the non-compact base $B$,
and we denote by $C_{1}, \ldots, C_{N}$ the respective generic fibers,
with $C_{i} \cong \PP^{1}$. In Definition \ref{defn: non-exceptional
curve class defn} we introduced the group $N_{1}(Y)^{\natural} \subset
N_{1}(Y) \otimes \QQ$ characterized by classes with trivial Euler
characteristic and vanishing Euler pairing with $\OO_{D_{i}}$ for $i
=1, \ldots, N$.
\begin{definition}\label{defn: H2(Y)perp}
Denote by $H_{2}(Y)^{\natural}$ the image of the cycle class map
\[
N_{1}(Y)^{\natural} \hookrightarrow H_{2}(Y, \QQ)
\]
associating the class of a one-dimensional sheaf to the class of its support. 
\end{definition}
\noindent It is straightforward to verify that $H_{2}(Y)^{\natural}$
is the orthogonal complement to the exceptional locus $\{D_{1},
\ldots, D_{N}\}$.

We then identify $v=\sum_{i}v_{i}\alpha_{i}$ with its corresponding
curve class:
\[
\sum_{i=1}^{N} v_{i} [C_{i}] \in H_{2}(Y, \QQ).
\]

The $T = \CC^{*}$ action on $\XX$ lifts canonically to $Y$, and the
induced $T$-fixed locus of the moduli space of stable maps is
proper. Completely analogous to the previous section, one can
therefore use Graber-Pandharipande localization to define the GW
invariants $\langle \,\,\, \rangle_{g, \beta +v}^{Y} \in \QQ$ where
$\beta \in H_{2}(Y)^{\natural}$ and $v\in \LambdaWeightAN$.  We note
that for an ordinary CY3, the GW invariants do not require
insertions.

\begin{definition}\label{defn: GW potentials F and Fn-exc}
The GW potential of $Y$ (for non-zero curve classes) is given by
\begin{equation*}
\F_{Y}^{\GW}(Q, \lambda, \bm{w}) = \sum_{\substack{\beta+v \in
H_{2}(Y) \\ \beta+v \neq 0}} \; \sum_{g \geq 0}\; \langle \,\,\, \rangle_{g, \beta + v }^{Y}
\; \lambda^{2g-2} Q^{\beta} \bm{w}^{v}.
\end{equation*}
The GW potential of $Y$ (for non-purely exceptional curve classes) is given by
\begin{equation*}
\F_{Y, \nexc}^{\GW}(Q, \lambda, \bm{w}) = \sum_{\substack{\beta \in
H_{2}(Y)^{\natural} \\ \beta \neq 0}} \; \sum_{g \geq 0}\;  \sum_{v \in
\LambdaWeightAN} \langle \,\,\, \rangle_{g, \beta + v }^{Y}
\; \lambda^{2g-2} Q^{\beta} \bm{w}^{v}.
\end{equation*}
\end{definition}

Let us now state the crepant resolution conjecture in Gromov-Witten
theory. This remains conjectural for general local orbifold CY3s of
$A_{N}$-type, but it has been proven in the toric setting
\cite{CCIT,Ross-CMP2015}.  

\begin{conjecture}[Bryan-Graber \cite{BryanGraber}]\label{conj: Bryan-Graber conj}
Let $\XX$ be a local orbifold CY3 of $A_{N}$-type with crepant
resolution $Y$. Performing the change of variables between $\bm{w} =
(w_{1}, \ldots, w_{N})$ and $\bm{x} = (x_{1}, \ldots, x_{N})$ defined
by $w_{i} = z_{i} z_{i+1}^{-1}$, where for all $k=1, \ldots, N+1$
\begin{equation}\label{eqn: CR change of variables}
z_{k} = -\omega^{-k + \frac{1}{2}} \exp \bigg( \frac{-1}{N+1} \sum_{j=1}^{N} \omega^{j(k-\frac{1}{2})} x_{j} \bigg) \,\,\,\,\,\,\,\,\, \omega = \exp\big(\tfrac{ 2 \pi i}{N+1} \big),
\end{equation}
we then have the equality of generating series
\begin{equation}\label{eqn: GW CRC eqn}
\F_{\XX}^{\GW}(Q, \lambda, \bm{x}) = \F_{Y, \nexc}^{\GW}(Q, \lambda, \bm{w}).
\end{equation}
\end{conjecture}

\noindent We note that the change of variables in (\ref{eqn: CR change
of variables}) is motivated by work of Bryan-Gholampour
\cite{Bryan-Gholampour} in the case of orbifolds of $A_{N}$-type.

\subsection{The Gopakumar-Vafa Invariants}\label{subsection: GVGW main body section}

Recall that in (\ref{eqn: Defn of Elem Symm Funcs}) we defined the
elementary symmetric functions $\sigma_{i}(\bm{z}) = \sigma_{i}(z_{1},
\ldots, z_{N+1})$ for $i=1, \ldots, N$ with the understanding that
$z_{1}\cdots z_{N+1} = 1$.

\begin{theorem}\label{thm: Orb GV Multiple Cover}
Assume the GW crepant resolution conjecture (GW CRC) holds for $\XX$,
a local orbifold CY3 of $A_{N}$-type which is one of the following
forms:
\begin{enumerate}[(i)]
\item a local orbifold $K3$ surface (with the fiberwise $\CC^{*}$-action),
\item a local orbifold surface $\Sorb$ with a $\CC^{*}$-action induced
from a $\CC^{*}$-action on $\Sorb$,
\item a local orbifold curve (with the anti-diagonal fiberwise $\CC^{*}$-action).
\end{enumerate}
Given a function $f(z_{1}, \ldots, z_{N+1})$, let $\Psi_{d}(f) =
f(z_{1}^{d}, \ldots, z_{N+1}^{d})$.  Then, with the change of
variables (\ref{eqn: CR change of variables}), the $\bm{m}$-graded GV
invariants are related to the orbifold GW invariants by the formula
\begin{equation}\label{eqn: Orb GV Multiple Cover}
\F^{\GW}_{\XX}(Q, \lambda, \bm{x}) = (-1)^{\epsilon} \sum_{\substack{
\beta \in H_{2}(X, \QQ) \\ \beta \neq 0}} \,\, \sum_{g \geq 0 } \,\,
\sum_{d \geq 1} \,\, \Psi_{d}\big( N^{\GW}_{\beta, g}(\bm{z})
\big)\big( 2 \sin \tfrac{d \lambda}{2} \big)^{2g-2} \frac{Q^{d
\beta}}{d}
\end{equation}
where
\[
N^{\GW}_{\beta, g}(\bm{z}) \coloneqq \sum_{\bm{m} \geq 0} \nrefined \,
\prod_{i=1}^{N} \sigma_{i}(\bm{z})^{m_{i}}
\]
and where $\epsilon = 1$ for case (i) and $\epsilon = 0$ for cases
(ii) and (iii). 
\end{theorem}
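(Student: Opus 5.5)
We first use the GW Crepant Resolution Conjecture~\ref{conj: Bryan-Graber conj} to replace the left-hand side of \eqref{eqn: Orb GV Multiple Cover} by $\F^{\GW}_{Y,\nexc}(Q,\lambda,\bm{w})$, with $\bm{w}$ and $\bm{x}$ related by \eqref{eqn: CR change of variables}. The problem then lives entirely on the smooth CY3 $Y$. We will expand $\F^{\GW}_{Y,\nexc}$ using the ordinary Gopakumar--Vafa formula for $Y$. This is the GW/PT correspondence (MNOP, proved by Pardon) combined with the definition \eqref{eqn: Ordinary GV/PT formula}, applied to the equivariant (localized) invariants. In cases (ii) and (iii) the formula takes its usual form; in case (i), the sign conventions of \cite{MaulikThomas2018} for $S\times\CC$ with the fiberwise action produce the global factor $(-1)^{\epsilon}$, matching the sign we defined away in Section~\ref{subsec: constr of local orbifold K3s}. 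Concretely, we will justify
\[
\F^{\GW}_{Y,\nexc} = (-1)^{\epsilon}\sum_{\beta\neq 0}\sum_{v\in\LambdaWeightAN}\sum_{g\geq 0}\sum_{d\geq 1} n^{Y}_{g}(\beta+v)\bigl(2\sin\tfrac{d\lambda}{2}\bigr)^{2g-2}\bm{w}^{dv}\frac{Q^{d\beta}}{d}.
\]
We must also check that restricting to non-exceptional $\beta\neq 0$ on the GW side matches dividing by $\ZZZ_{\exc}$ on the PT side. This holds because a multiple $d(\beta+v)$ is non-exceptional exactly when $\beta+v$ is, so both restrictions select the same multiple-cover terms.

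Next we insert Theorem~\ref{thm: main structure thm of the GV invariants}. For each $\beta$ and $g$, the inner sum $\sum_{v} n^{Y}_{g}(\beta+v)\bm{w}^{v}$ equals $N_{\beta,g}(\bm{w}) = \sum_{\bm{m}} \nrefined \prod_i \Phi_i(\bm{w})^{m_i}$. This is a finite sum by the finiteness part of that theorem, so no convergence issues arise after substitution. The term $\sum_v n^Y_g(\beta+v)\bm{w}^{dv}$ is then $\Psi_d(N_{\beta,g})(\bm{w})$, where $\Psi_d$ raises each $w_i$ to the $d$-th power. By Lemma~\ref{lemma: AN change of vars Weyl-invariance}, $\Phi_i(\bm{w})=\sigma_i(\bm{z})$ under $w_k=z_kz_{k+1}^{-1}$. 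The operation $w_k\mapsto w_k^d$ is induced by $z_k\mapsto z_k^d$, which also preserves the constraint $z_1\cdots z_{N+1}=1$. Hence $\Psi_d$ on $\bm{w}$ agrees with $\Psi_d$ on $\bm{z}$, and $\Psi_d(N_{\beta,g})=\Psi_d(N^{\GW}_{\beta,g}(\bm{z}))$. Finally, $H_2(Y)^{\natural}$, $N_1(Y)^{\natural}$ and $H_2(X,\QQ)$ are identified as the index sets for $\beta$, which gives \eqref{eqn: Orb GV Multiple Cover}.

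We expect the main obstacle to be the first step: justifying the GV expansion of the equivariant, localized GW potential of the non-compact $Y$ in terms of the PT-defined $n^Y_g$, including fractional weight-lattice classes $\beta+v$. The plan is to cite the equivariant GW/PT correspondence for toric or local geometries in cases (ii) and (iii). In case (i), we would instead use the reduced theory and the KKV computation of \cite{PandharipandeThomas2016,MaulikThomas2018}, where the formula is known directly and the sign $\epsilon=1$ appears. A subsidiary check is that the change of variables \eqref{eqn: CR change of variables} is used only through $\bm{z}$, so that the roots of unity in $z_k$ cause no ambiguity in the fractional powers $\bm{w}^v$, $v\in\LambdaWeightAN$.
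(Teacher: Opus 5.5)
Your outline follows the paper's proof, repackaged. The paper goes $\F^{\GW}_{\XX}=\F^{\GW}_{Y,\nexc}=(-1)^{\epsilon}\log\ZZZ^{\BS}_Y=(-1)^{\epsilon}\log\ZZZ^{\PT}_{\XX}$ and then applies the definition of the $\bm{m}$-graded invariants. You instead expand $\log\ZZZ^{\PT}_Y-\log\ZZZ_{\exc}$ in ordinary GV invariants of $Y$ and then invoke Theorem~\ref{thm: main structure thm of the GV invariants}. That theorem is just the M\"obius-inverted form of $\log\ZZZ^{\PT}_{\XX}=\log\ZZZ^{\BS}_Y$, so the two routes coincide. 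Your bookkeeping is fine: the non-exceptional multiple covers, $\Psi_d$ on $\bm{w}$ versus $\bm{z}$, and $\psi_{-(-y)^d}=(2\sin\frac{d\lambda}{2})^2$ at $y=-e^{i\lambda}$.

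The gap is in the step you flag yourself, for cases (ii) and (iii).

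\begin{itemize}
\item The invariants $n^Y_g$ are defined from PT invariants that are Behrend-weighted Euler characteristics. So are the $n^{\XX}_g(\beta;\bm{m})$, which is how Theorem~\ref{thm: main structure thm of the GV invariants} links them to the $n^Y_g$.
\item The GW potentials on both sides of the CRC are defined by $\CC^*$-localization.
\item Citing the equivariant GW/PT correspondence only equates localized GW invariants with localized PT invariants. That correspondence has to be Pardon's; case (ii) need not be toric, so toric results do not suffice.
\end{itemize}

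You still need localized PT invariants of $Y$ to equal the Behrend-weighted ones. Your sentence ``in cases (ii) and (iii) the formula takes its usual form'' simply asserts this. It is not automatic: in case (i) the fiberwise action on $\overline{S}\times\CC$ acts nontrivially on $K_Y$, and the two definitions differ by exactly the sign $(-1)^{\epsilon}$.

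The missing idea is that the actions in (ii) and (iii) are Calabi--Yau. Equivariantly $K_{\Tot E}\cong\pi^*(K_B\otimes\det E^{-1})$ with $\det E=K_B$ and trivial character, so the equivariant PT obstruction theory is symmetric with no twist. The argument then runs as follows.

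\begin{itemize}
\item On each component $F_\alpha$ of the (compact) fixed locus, $N^{\vir}_\alpha=N^{\vir}_{\alpha,+}-(N^{\vir}_{\alpha,+})^{\vee}$, so $1/e(N^{\vir}_\alpha)=(-1)^{r_\alpha}$ with $r_\alpha=\operatorname{rk}N^{\vir}_{\alpha,+}$.
\item Li--Qin gives $\nu_P|_{F_\alpha}=(-1)^{\dim T_pP-\dim T_pF}\nu_{F_\alpha}$. Serre duality on weight spaces shows $r_\alpha\equiv\dim T_pP-\dim T_pF \pmod 2$.
\item Behrend's theorem on the compact $F_\alpha$ gives $\int_{[F_\alpha]^{\vir}}1=e(F_\alpha,\nu_{F_\alpha})$.
\item Finally, $\CC^*$-localization of topological Euler characteristics yields $\int_{[F]^{\vir}}1/e(N^{\vir})=e(P,\nu_P)$.
\end{itemize}

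The paper proves this as a separate proposition. With it supplied, and Maulik--Thomas for case (i), your argument goes through.
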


\begin{proof}
To prove this result, it will be necessary to convert the GW theory of
$\XX$ into the GW theory of $Y$ using the GW CRC and then convert to
the PT theory of $Y$ using the MNOP correspondence. The MNOP
correspondence was proven for compact CY3s by Pardon in 2023
\cite{Pardon}. We will argue below, on a case by case basis, that his
theory applies in the three non-compact cases in the theorem. The
correspondence will involve an overall minus sign in the local $K3$
case (which is an artifact of using localization to define the
invariants on the GW side and weighted Euler characteristics on the PT
side). The MNOP correspondence is the equality
\begin{equation}\label{eqn: MNOP correspondence for special local
cases Y}
\F^{\GW}_{Y}(Q,\lambda ,\bm{w}) = (-1)^{\epsilon} \log \ZZZ^{\PT}_{Y}(Q,y,\bm{w})
\end{equation}
after the change of variables $y=-e^{i\lambda}$. We defer the case
by case proof of Equation~\eqref{eqn: MNOP correspondence for special local
cases Y} and assume it for now.

From Equation~\eqref{eqn: MNOP correspondence for special local cases
Y} and Equation~\eqref{eqn: defn of BS partition function} it then
follows that

\begin{equation*}
\F^{\GW}_{Y, \nexc}(Q, \lambda, \bm{w}) = (-1)^{\epsilon} \log
\ZZZ^{\textsf{BS}}_{Y}(Q, y, \bm{w}).
\end{equation*}
Then applying the GW CRC and Theorem~\ref{thm: Main Weyl invariance
theorem} we get
\[
\F^{\GW}_{\XX}(Q, \lambda, \bm{x}) = (-1)^{\epsilon} \log \ZZZ^{\PT}_{\XX}(Q, y, \bm{w})
\]
where this equality involves the change of variables between $\bm{x}$
and $\bm{w}$ given by Equation~\eqref{eqn: CR change of
variables}. Applying Definition \ref{defn: Main GVPT Defn} we have
\[
\F^{\GW}_{\XX}(Q, \lambda, \bm{x}) = (-1)^{\epsilon} \sum_{\substack{
\beta \in H_{2}(X, \QQ) \\ \beta \neq 0}} \;  \sum_{\substack{g \geq 0\\d>0}} \,\, \Psi_{d}\big(N_{\beta, g}(\bm{w})\big) \big( 2
\sin \tfrac{d \lambda}{2} \big)^{2g-2} \, \frac{Q^{d \beta}}{d}
\]
where we have used the fact that
\[
\psi_{-(-y)^{d}} = \big( 2 \sin \tfrac{d \lambda}{2} \big)^{2}
\]
after setting $y=-e^{i\lambda}$. 

By Lemma \ref{lemma: AN change of vars Weyl-invariance}, under the
change of variables $w_{i} = z_{i}z_{i+1}^{-1}$, for all $i=1, \ldots,
N$, we have $\Phi_{i}(\bm{w}) = \sigma_{i}(\bm{z})$. Therefore,
\[
N^{\GW}_{\beta, g}(\bm{z}) = N_{\beta, g}(\bm{w})\big|_{w_{i} = z_{i} z_{i+1}^{-1}}
\]
which establishes the theorem, conditional on Equation~\eqref{eqn:
MNOP correspondence for special local cases Y}. This reduces the proof
to establishing the equality \eqref{eqn: MNOP correspondence for
special local cases Y}, which we do separately for the three cases.

\vskip2ex

\noindent \textbf{Case (i): Local orbifold $\bm{K3}$ Surface.}
In this case, Equation~\eqref{eqn: MNOP correspondence for special local
cases Y} is a theorem of Maulik-Thomas \cite[Theorem
6.3]{MaulikThomas2018} applied to
$Y=\overline{S}\times \CC$.

\vskip2ex
\noindent \textbf{Cases (ii) and (iii): Local surfaces with a
$\CC^{*}$-action and local curves}

For these two cases, we use the following observation. By work of
Pardon \cite{Pardon}, the standard MNOP correspondence for the local
geometry $Y$ holds if both the GW and PT theories are computed via
localization. Consequently, Equation~\eqref{eqn: MNOP correspondence
for special local cases Y} holds with $\epsilon =0$, when the
right-hand side, which is computed using Behrend-weighted Euler
characteristics, agrees with the similarly defined generating series
computed in PT theory by $\CC^{*}$-localization. By
Proposition~\ref{prop: C*-localization and weighted Euler char
definitions coincide when the action is CY} below, this holds whenever
the induced $\CC^{*}$-action on $\XX $ is a \emph{Calabi-Yau action},
namely that the induced action on $K_{\XX }$ is trivial (in such
cases, the induced action on $Y$ is also Calabi-Yau). By construction,
the $\CC^{*}$-action on $\XX$ is a Calabi-Yau action in cases (ii) and
(iii): indeed, for any vector bundle $\pi :E\to B$, one has an
equivariant isomorphism $K_{\Tot (E)}\cong \pi^{*}(K_{B}\otimes (\det
E)^{-1})$.  Both cases (ii) and (iii) are of this type with $\det E =
K_{B}$, and the given $\CC^{*}$-action has trivial determinant
character, so $K_{\XX }$ is equivariantly trivial.  Thus it suffices to
prove the following:

\begin{proposition}\label{prop: C*-localization and weighted Euler
char definitions coincide when the action is CY} Let $Y$ be a CY3 with
a Calabi-Yau $\CC^{*}$-action and let $P=P_{n}(Y,\beta )$ be the PT
moduli space. Assume that the fixed point locus $F=P^{\CC^{*}}\subset
P$ is compact. Then the PT invariants of $Y$ defined by virtual
localization and by weighted Euler characteristics coincide. Namely,
let $[F]^{\vir}$ be the induced virtual class on the fixed point
locus, let $N^{\vir}$ be the virtual normal bundle, let $\nu_{P}:P\to
\ZZ$ be Behrend's constructible function, and let $e(P,\nu_{P}) =
\sum_{k\in \ZZ} k\cdot e\left(\nu_{P}^{-1}(k) \right)$ be the
Behrend-function-weighted Euler characteristic. Then
\[
\int_{[F]^{\vir}} \frac{1}{e(N^{\vir})} = e(P,\nu_{P}) .
\] 
\end{proposition}
\smallskip

To prove this proposition, let
\[
\mathbb{E}_P^\bullet \longrightarrow \mathbb{L}_P
\]
be the standard $\CC^{*}$-equivariant perfect obstruction theory on
$P$ \cite{Pandharipande-Thomas}.
Since the action on $Y$ is Calabi--Yau, equivariant Serre duality
makes this obstruction theory equivariantly symmetric, with no
character twist:
\[
\mathbb{E}_P^\bullet
\cong
(\mathbb{E}_P^\bullet)^\vee[1].
\]
By \cite[Theorem~C]{LiQin2013}, the fixed locus $F$ inherits a
symmetric perfect obstruction theory from the fixed part of
$\mathbb{E}_P^\bullet$, and its Behrend function is related to that of
$P$ by
\[
\nu_P(p)
=
(-1)^{\dim T_p P-\dim T_p F}\nu_F(p),
\qquad p\in F.
\]

Write
\[
F=\coprod_\alpha F_\alpha
\]
as the disjoint union of its connected components, and let
$N_\alpha^{\vir}$ denote the restriction of $N^{\vir}$ to $F_\alpha$.
Let $N_{\alpha,+}^{\vir}$ be the sum of the strictly positive weight
summands of $N_\alpha^{\vir}$, regarded as a virtual bundle, and put
\[
r_\alpha=\operatorname{rk}N_{\alpha,+}^{\vir}.
\]
The equivariant symmetry of the obstruction theory identifies the
negative weight part with minus the dual of the positive weight part.
Thus, in equivariant $K$-theory,
\[
N_\alpha^{\vir}
=
N_{\alpha,+}^{\vir}
-
(N_{\alpha,+}^{\vir})^\vee.
\]
Since
\[
e\big((N_{\alpha,+}^{\vir})^\vee\big)
=
(-1)^{r_\alpha}e(N_{\alpha,+}^{\vir}),
\]
we obtain
\[
\frac{1}{e(N_\alpha^{\vir})}=(-1)^{r_\alpha}.
\]

We next identify this sign with the sign in the Li--Qin formula.  For
$p\in F_\alpha$, write the weight decomposition of the Zariski tangent
space as
\[
T_p P=\bigoplus_{j\in\ZZ}T_{p,j}P.
\]
Equivariant Serre duality identifies the weight-$j$ obstruction space
with $(T_{p,-j}P)^\vee$.  It follows that
\[
r_\alpha
=
\sum_{j>0}
\left(
\dim T_{p,j}P-\dim T_{p,-j}P
\right).
\]
Consequently,
\[
r_\alpha
\equiv
\sum_{j\ne 0}\dim T_{p,j}P
=
\dim T_p P-\dim T_p F
\pmod 2.
\]
In particular, the Li--Qin identity becomes
\[
\left.\nu_P\right|_{F_\alpha}
=
(-1)^{r_\alpha}\nu_{F_\alpha}.
\]

Since each $F_\alpha$ is proper and carries a symmetric obstruction
theory, Behrend's theorem \cite[Theorem~4.18]{KaiBehrend} gives
\[
\int_{[F_\alpha]^{\vir}}1
=
e(F_\alpha,\nu_{F_\alpha}).
\]
Therefore
\begin{align*}
\int_{[F]^{\vir}}\frac{1}{e(N^{\vir})}
&=
\sum_\alpha
(-1)^{r_\alpha}
\int_{[F_\alpha]^{\vir}}1 \\
&=
\sum_\alpha
(-1)^{r_\alpha}
e(F_\alpha,\nu_{F_\alpha}) \\
&=
e\big(F,\left.\nu_P\right|_F\big).
\end{align*}

Finally, the Behrend function $\nu_P$ is $\CC^{*}$-invariant.  Euler
characteristic localizes to the fixed locus for a $\CC^*$-action:
the complement of the fixed locus can be stratified by
$\CC^*$-orbits, all of which have Euler characteristic zero.  Hence,
for every $k\in\ZZ$,
\[
e\big(\nu_P^{-1}(k)\big)
=
e\big(\nu_P^{-1}(k)\cap F\big).
\]
It follows that
\[
e(P,\nu_P)
=
e\big(F,\left.\nu_P\right|_F\big).
\]
Combining the preceding identities proves
\[
\int_{[P^{\CC^*}]^{\vir}}
\frac{1}{e(N^{\vir})}
=
e(P,\nu_P).
\]
This proves Proposition~\ref{prop: C*-localization and weighted Euler
char definitions coincide when the action is CY} which then completes
the proof of Theorem~\ref{thm: Orb GV Multiple Cover}.
\end{proof}

\begin{corollary}\label{cor: Main GW corollary}
Assume $\XX$ is as in Theorem \ref{thm: Orb GV Multiple Cover}. The
total GV invariants $n_{g}^{\XX}(\beta)$ are encoded into the total GW
potential of $\XX$ by the formula:
\[
\F^{\GW}_{\XX}(Q,\lambda ,\bm{x})\Big\rvert_{x_{j} = \frac{\pi}{N+2}\csc
\left(\frac{j\pi}{N+1} \right)} = (-1)^{\epsilon} \sum_{\beta \neq 0}\, \sum_{g \geq 0}\, \sum_{d
\geq 1}\, n^{\XX}_{g}(\beta) \, \left( 2 \sin \tfrac{d \lambda}{2} \right)^{2g-2}\,
\frac{Q^{d \beta}}{d}.
\]
\end{corollary}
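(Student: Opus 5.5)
The plan is to specialize Theorem~\ref{thm: Orb GV Multiple Cover} at the indicated values of $\bm{x}$ and show that every $\sigma_{i}(\bm{z})$, and every Adams-transformed $\Psi_{d}(\sigma_{i})(\bm{z})$, becomes $1$. Then $\Psi_{d}\big(N^{\GW}_{\beta,g}(\bm{z})\big)$ collapses to $\sum_{\bm{m}}\nrefined = n^{\XX}_{g}(\beta)$ for every $d\geq 1$, and equation~\eqref{eqn: Orb GV Multiple Cover} becomes exactly the claimed formula. By Theorem~\ref{thm: main structure thm of the GV invariants}, for fixed $\beta$ and $g$ only finitely many $\nrefined$ are nonzero, so the specialization is term-by-term legitimate.

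\textbf{Step 1: the exponent.} Substitute $x_{j}=\frac{\pi}{N+2}\csc\frac{j\pi}{N+1}$ into \eqref{eqn: CR change of variables} and evaluate
\[
E_{k}=\frac{-1}{N+1}\sum_{j=1}^{N}\omega^{j(k-\frac12)}\,\frac{\pi}{N+2}\csc\Big(\frac{j\pi}{N+1}\Big).
\]
The target is that $z_{k}=-\omega^{-k+\frac12}e^{E_{k}}$ equals $-\zeta^{-k}$ up to a $k$-independent root-of-unity factor. Writing $\csc\theta = 2i/(e^{i\theta}-e^{-i\theta})$ expresses the sum as a finite Fourier sum. I expect to use the classical identity that $\sum_{j=1}^{N}\omega^{j(k-\frac12)}\csc\frac{j\pi}{N+1}$ is proportional to $i$ times an affine-linear function of $k$, namely $i\,(N+2-2k)$ up to the normalization. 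This is a discrete Fourier transform of a sawtooth, which I would verify by computing the finite Fourier series of $k\mapsto k-\tfrac{N+2}{2}$ on $\ZZ/(N+1)$ with the half-integer shift.

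With this identity, $e^{E_{k}}=\exp\!\big(2\pi i\,(ak+b)\big)$ for explicit rational $a,b$. Combined with $\omega^{-k+\frac12}$, this should give $z_{k}=-c\,\zeta^{-k}$ with $c$ independent of $k$, and the constraint $z_{1}\cdots z_{N+1}=1$ then forces $c=1$. This Fourier-sum evaluation, with correct tracking of all phase factors, is the main obstacle. I would first check it numerically for $N=1$: there $x_{1}=\pi/3$ and $z_{1}=-i\exp(-i\pi/6)$, which should equal $-\zeta^{-1}$ with $\zeta=e^{2\pi i/3}$. Indeed $-i e^{-i\pi/6}=-e^{i\pi/3}=-\zeta^{-1}$ up to the sign convention, which I would confirm carefully.

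\textbf{Step 2: the elementary symmetric functions.} Once $z_{k}=-\zeta^{-k}$, the proof of Corollary~\ref{cor: unrefined PT/GV specialization 2} gives $\prod_{k=1}^{N+1}(1+z_{k}t)=1+t+\dots+t^{N+1}$, so $\sigma_{i}(\bm{z})=1$.

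\textbf{Step 3: the Adams operators.} $\Psi_{d}$ replaces $z_{k}$ by $z_{k}^{d}=(-1)^{d}\zeta^{-kd}$, and we need $\sigma_{i}(\bm{z}^{d})=1$ for all $d$. Since $\zeta$ is a primitive $(N+2)$-th root, $\{\zeta^{-k}\}_{k=1}^{N+1}$ is the set of nontrivial roots of $t^{N+2}=1$. Raising to the $d$-th power permutes these roots when $\gcd(d,N+2)=1$, but not otherwise. So I must check this directly. The cleanest route is to note that $\Psi_{d}(N^{\GW}_{\beta,g})$ equals $\sum_{v}n^{Y}_{g}(\beta+v)\,\zeta^{d\,l(v)}$ via Theorem~\ref{thm: main structure thm of the GV invariants}.

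If this fails for non-coprime $d$, I would reinterpret the corollary as the specialization holding coefficientwise in $Q$ after Möbius inversion. This requires understanding whether $n^{Y}_{g}(\beta+v)$ vanishes unless $l(v)$ satisfies congruence constraints. Handling this Adams-operator compatibility is the second potential difficulty.
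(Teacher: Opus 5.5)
Your Steps 1 and 2 coincide with the paper's proof. The paper uses $\sum_{j=1}^{N}\omega^{jk}/(\omega^{j}-1)=\tfrac{N}{2}-k+1$. This is exactly your guessed identity $\sum_{j}\omega^{j(k-\frac12)}\csc\frac{j\pi}{N+1}=i(N+2-2k)$ once you write $\csc\frac{j\pi}{N+1}=2i\,\omega^{j/2}/(\omega^{j}-1)$, and substituting gives $z_k=-\zeta^{-k}$ exactly. Two small corrections:
\begin{itemize}
\item The relation $z_1\cdots z_{N+1}=1$ holds automatically for \eqref{eqn: CR change of variables}. For $z_k=-c\,\zeta^{-k}$ it only gives $c^{N+1}=1$, so it cannot replace the explicit phase computation.
\item For $N=1$ one has $z_1=-\omega^{-1/2}e^{-i\pi/6}=ie^{-i\pi/6}=e^{i\pi/3}=-\zeta^{-1}$, with no sign ambiguity.
\end{itemize}

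The genuine gap is Step 3, and it cannot be closed: the claimed identity fails for most $d$. At $z_k=-\zeta^{-k}$ one computes
$\prod_{k=1}^{N+1}(1+z_k^{d}t)=(1-s^{M})^{r}/(1-s)$, where $s=(-1)^{d+1}t$, $r=\gcd(d,N+2)$ and $M=(N+2)/r$. Hence $\sigma_i(\bm{z}^{d})=1$ for all $i$ exactly when $d$ is odd and prime to $N+2$. Your coprime case also overlooks the sign $(-1)^{d}$: for even coprime $d$ you get $\sigma_i(\bm{z}^{d})=(-1)^{i}$.

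No vanishing of $n^{Y}_{g}(\beta+v)$ rescues this. Take the local teardrop with $N=1$. By \eqref{eqn: Pandharipande-Johnson-Tseng}, the $Q^{2}$-coefficient of $\F^{\GW}_{\XX}$ is $\tfrac12\sigma_1(\bm z^{2})(2\sin\lambda)^{-2}$, with $\sigma_1(\bm z^2)=z_1^2+z_2^2=-2\cos x$; the check is $\sum_{g,\ell}\langle\gamma^{2\ell}\rangle_{g,2[\PP^1]}\lambda^{2g-2}x^{2\ell}/(2\ell)!=-\cos(x)(2\sin\lambda)^{-2}$. At $x=\pi/3$ this coefficient is $-\tfrac12(2\sin\lambda)^{-2}$. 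Proposition~\ref{prop: Multiple Cover Formula for Local Teardrop} gives $n^{\XX}_0([\PP^1])=1$ and all other invariants zero, so the corollary predicts $+\tfrac12(2\sin\lambda)^{-2}$ from the $d=2$ cover of $[\PP^1]$. The two disagree.

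The paper's own proof has the same omission. It only verifies $\sigma_i(\bm z)=1$ and tacitly lets the specialization commute with $\Psi_d$. What Theorem~\ref{thm: Orb GV Multiple Cover} actually yields is the displayed formula with $n^{\XX}_g(\beta)$ in the $d$-th term replaced by $N^{\GW}_{\beta,g}$ evaluated at $z_k=(-1)^{d}\zeta^{-kd}$. Equivalently, that term is $\sum_{v}\zeta^{d\,l(v)}n^{Y}_{g}(\beta+v)$, with the branch fixed by the $\bm z$-variables. This agrees with the stated formula for $d$ odd and prime to $N+2$, in particular for $d=1$, which is all the subsequent primitive genus-zero corollary needs. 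Your M\"obius-inversion fallback would amount to changing the statement in this way, not proving it as written.
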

\begin{proof}
The quantities $N^{\GW}_{\beta, g}(\bm{z})$ determine
$\F^{\GW}_{\XX}(Q,\lambda ,\bm{x})$ as in Theorem \ref{thm: Orb GV
Multiple Cover}, which involves the change of variables (\ref{eqn: CR
change of variables}) between $\bm{x}$ and $\bm{z}$. It is clear that
the total GV invariants $n_{g}^{\XX}(\beta)$ arise by making a
suitable specialization of the $\bm{z}$-variables in $N^{\GW}_{\beta,
g}(\bm{z})$ such that $\sigma_{i}(\bm{z}) = 1$ for all $i = 1, \ldots,
N$. By the proof of Corollary \ref{cor: unrefined PT/GV
specialization 2}, this is achieved by setting 
\[
z_{k} = - \zeta^{-k}
\]
for all $k=1,
\ldots, N+1$ where 
\[
\zeta =\exp\left(\tfrac{2\pi i}{N+2} \right).
\]
By way of the change of variables (\ref{eqn: CR change
of variables}), we want to show that this specialization arises by
setting
\[
x_{j} = \frac{\pi}{N+2} \csc\bigg(\frac{ j \pi }{N+1} \bigg) = \frac{
2 \pi i}{N+2} \, \frac{\omega^{\frac{j}{2}}}{\omega^{j}-1}
\]
where 
\[
\omega =\exp\left(\tfrac{2\pi i}{N+1} \right).
\]
Substituting this expression into (\ref{eqn: CR change of variables}) and simplifying, we have
\[
z_{k} = -\exp \bigg( - \frac{2 \pi i}{(N+1)(N+2)} \, \bigg( (k-\tfrac{1}{2})(N+2) + \sum_{j=1}^{N} \frac{\omega^{jk}}{\omega^{j} - 1}\bigg) \bigg).
\]
It is straightforward to verify the identity
\[
\sum_{j=1}^{N} \frac{\omega^{jk}}{\omega^{j} - 1} = \frac{N}{2}-k+1
\]
from which the result follows. 
\end{proof}

For a primitive class on an ordinary Calabi-Yau threefold, the genus
$0$ GV invariant is simply the genus $0$ GW invariant because there
are no multiple covers, and no collapsing components by stability. But
in the orbifold setting, we can have collapsing components mapping to
orbifold points. However, it is a consequence of Corollary~\ref{cor:
Main GW corollary} that we have
\begin{corollary}
Let $\beta$ be a primitive class on $\XX$, a local orbifold CY3 of
$A_{N}$-type. Then
\begin{equation}\label{eqn: genus 0 primitive invariants}
n^{\XX}_{0}(\beta) = \F_{\XX, \beta}^{0}(\bm{x}) \big|_{x_{j} = \tfrac{\pi}{N+2} \csc\big(\tfrac{j \pi}{N+1}\big)}.
\end{equation}
\end{corollary}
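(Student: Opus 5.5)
The plan is to extract the coefficient of $Q^{\beta}\lambda^{-2}$ on both sides of Corollary~\ref{cor: Main GW corollary}. Primitivity of $\beta$ makes the multiple-cover sum collapse, and only the genus-zero term contributes in the leading order of $\lambda$.

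First I compare coefficients of $Q^{\beta}$. On the left-hand side this coefficient is $\sum_{g\geq 0}\F^{g}_{\XX,\beta}(\bm{x})\lambda^{2g-2}$, evaluated at $x_{j}=\tfrac{\pi}{N+2}\csc(\tfrac{j\pi}{N+1})$. On the right-hand side a term $Q^{d\beta'}$ equals $Q^{\beta}$ only if $d\beta'=\beta$ in $H_{2}(X,\QQ)$. Since $\beta$ is primitive and $\beta'$ ranges over classes carrying nonzero invariants, this forces $d=1$ and $\beta'=\beta$.

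The one point needing care is what ``primitive'' means when $N_{1}(\XX)$ may involve fractional classes (Remark~\ref{remark: integrality convention}). I take it to mean that $\beta$ is not $d\beta'$ for any $d\geq 2$ with $\beta'$ a class of nonvanishing invariants, i.e.\ primitive in the lattice generated by effective curve classes in $N_{1}(\XX)$. Under that reading the $d\geq2$ terms simply vanish, and the $Q^{\beta}$-coefficient of the right-hand side is
\[
(-1)^{\epsilon}\sum_{g\geq 0} n^{\XX}_{g}(\beta)\big(2\sin\tfrac{\lambda}{2}\big)^{2g-2}.
\]

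Next I expand in $\lambda$. We have $(2\sin\tfrac{\lambda}{2})^{2g-2}=\lambda^{2g-2}(1+O(\lambda^{2}))$, so the $\lambda^{-2}$ coefficient receives a contribution only from $g=0$, namely $n^{\XX}_{0}(\beta)$. On the GW side the $\lambda^{-2}$ coefficient is $\F^{0}_{\XX,\beta}$ evaluated at the special point. Equating the two gives \eqref{eqn: genus 0 primitive invariants}, up to the sign $(-1)^{\epsilon}$.

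For cases (ii) and (iii) we have $\epsilon=0$, so this is exactly the claim. In case (i), the sign convention $\Snrefined=-\nrefined$ absorbs the sign, so the statement is read with $n^{\Sorb}_{0}$. I will note this explicitly, or restrict the statement to cases where $\epsilon=0$.

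Finally, I note the standing hypotheses. The corollary inherits the assumptions of Corollary~\ref{cor: Main GW corollary}: $\XX$ is one of the three types listed there and the GW CRC holds. I would state that these hypotheses are in force.
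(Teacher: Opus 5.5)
Your proposal is correct and matches the paper, which proves the statement only by calling it a consequence of Corollary~\ref{cor: Main GW corollary}; extracting the $Q^{\beta}\lambda^{-2}$ coefficient, with primitivity removing the $d\geq 2$ terms, is exactly that deduction. Your caveats are accurate refinements of what the paper leaves implicit: the inherited hypotheses (one of the three listed cases, with the GW CRC), and the sign $(-1)^{\epsilon}$, which in the local $K3$ case makes the identity hold for $n^{\Sorb}_{0}(\beta)=-n^{\XX}_{0}(\beta)$.
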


\begin{example}
Let $\XX$ be a local orbifold CY3 of $A_{1}$-type, and suppose $C
\subset \XX$ is a rational curve representing a reduced and
irreducible class $\beta$. If $C$ is rigid and intersects the orbifold
locus transversely in $p$ points, then the contribution of $C$ to the
genus zero orbifold GW potential can be computed using a result of
Wise \cite{Wise-hyperelliptic-hodge}, and organizing the results into
a generating series easily yields
\[
\sum_{m \geq 0} \langle \gamma^{m} \rangle_{0, \beta}^{\XX}
\frac{x^{m}}{m!} = \big( 2\sin(\tfrac{x}{2}) \big)^{p}.
\]
By Equation~\eqref{eqn: genus 0 primitive invariants}, the
contribution of $C$ to the GV invariant $n^{\XX}_{0}(\beta)$ arises by
setting $x = \tfrac{\pi}{3} \csc(\tfrac{\pi}{2}) =
\tfrac{\pi}{3}$. Making this substitution into the above generating
series, we see
\[
\big( 2\sin( \tfrac{\pi}{6}) \big)^{p} = 1.
\]
Thus, $C$ contributes $1$ to the invariant $n^{\XX}_{0}(\beta )$.
\end{example}

\subsection{The Local Teardrop}\label{subsection: local teardrop}

Let $\PP^{1}(N+1, 1)$ denote the $\PP^{1}$-orbifold with
torus-invariant points $p_{0}$ and $p_{\infty}$, with
$B\mu_{\scriptscriptstyle N+1}$ orbifold structure at
$p_{0}$. Consider the two orbifold line bundles $\OO(-p_{0})$ and
$\OO(-p_{\infty})$ characterized by
\[
\OO(-p_{0})^{\otimes (N+1)} \isom c^{*} \OO(-1), \quad \quad \quad \quad \OO(-p_{\infty}) \isom c^{*} \OO(-1)
\]
where $c : \PP^{1}(N+1, 1) \to \PP^{1}$ is the map to the coarse space. The \emph{local teardrop} is defined to be the following total space of a rank two bundle
\[
\XX = \Tot\big( \OO(-p_{0}) \oplus \OO(-p_{\infty}) \big)
\]
which is an example of a toric orbifold CY3 of $A_{N}$-type. Observing that the fiber of the rank two bundle over $p_{0}$ is isomorphic to
\[
\big[ \CC/ \mu _{N+1} \big] \times \CC
\]
we see that there is a single non-compact curve in $\XX$ carrying transverse orbifold structure. 

In \cite{Johnson-Pandharipande-Tseng-localPab}, the GW theory of local $\PP^{1}$-orbifolds is computed. In the case of the local teardrop, \cite[Eqn. 21]{Johnson-Pandharipande-Tseng-localPab} takes the form
\begin{equation}\label{eqn: Pandharipande-Johnson-Tseng}
\begin{split}
\sum_{g \geq 0} & \langle \gamma_{1}^{m_{1}} \cdots \gamma_{N}^{m_{N}} \rangle_{g, d[\PP^{1}]}^{\XX} \, \lambda^{2g-2} \\
& = (-1)^{-\frac{1}{N+1}( \sum_{k=1}^{N} k m_{k} - d) + d + \sum_{k=1}^{N} m_{k}} \bigg(\frac{d}{N+1}\bigg)^{\sum_{k=1}^{N} k m_{k} - 1} \big( 2 \sin \tfrac{d \lambda}{2} \big)^{-2}
\end{split}
\end{equation}
subject to the constraint $\sum_{k=1}^{N} k m_{k} \equiv d \pmod N+1$ imposed by requiring the twisted stable map be representable.

\begin{proof}[Proof of Proposition \ref{prop: Multiple Cover Formula for Local Teardrop}]
It is straightforward to assemble (\ref{eqn: Pandharipande-Johnson-Tseng}) into the GW potential, which takes the form
\[
\F_{\XX}^{\GW}(Q, \lambda, \bm{x}) = \sum_{d \geq 1} [ A_{d}(\bm{x}) ]^{\sharp_{d}} \big( 2 \sin \tfrac{d \lambda}{2} \big)^{-2} \frac{Q^{d}}{d}
\]
where 
\[
A_{d}(\bm{x}) = (N+1) (-1)^{d} \omega^{\frac{d}{2}} \sum_{ \bm{m} \geq 0} \,\, \prod_{k=1}^{N} \frac{1}{m_{k}!} \big(-\omega^{-\frac{k}{2}} \tfrac{d x_{k}}{N+1}\big)^{m_{k}}, \,\,\,\,\,\,\,\, \omega = \exp\big(\tfrac{2 \pi i}{N+1}\big)
\]
and $[\cdots]^{\sharp_{d}}$ means keep only monomials
$x_{1}^{m_{1}}\cdots x_{N}^{m_{N}}$ of total weighted degree congruent
to $d$ modulo $N+1$ with $\dg x_{k} = k$.

Straightforward manipulations of $A_{d}(\bm{x})$ result in
\begin{equation*}
\begin{split}
A_{d}(\bm{x}) & = (N+1) (-1)^{d} \omega^{\frac{d}{2}} \prod_{k=1}^{N} \,\, \sum_{m \geq 0} \frac{1}{m!}  \big(-\omega^{-\frac{k}{2}} \tfrac{d x_{k}}{N+1}\big)^{m}\\  
& = (N+1) (-1)^{d} \omega^{\frac{d}{2}} \prod_{k=1}^{N} \exp \big(-\omega^{-\frac{k}{2}} \tfrac{d x_{k}}{N+1}\big) \\
& = (N+1) (-1)^{d} \omega^{\frac{d}{2}} \exp \bigg( - \frac{d}{N+1}
\sum_{k=1}^{N} \omega^{-\frac{k}{2}} x_{k}\bigg)
\end{split}
\end{equation*}
In order to extract the terms of $A_{d}(\bm{x})$ of total weighted
degree $d$ modulo  $N+1$ as described above, we employ the following
observation
\[
[A_{d}(\bm{x})]^{\sharp_{d}} = \frac{1}{N+1} \sum_{j=1}^{N+1} \omega^{-jd} A_{d}(\omega^{j}x_{1}, \ldots, \omega^{jk}x_{k}, \ldots, \omega^{jN}x_{N}).
\]
This identity holds for any power series in $N$ variables, and does not rely on the specific form of $A_{d}(\bm{x})$. Plugging in the above expression for $A_{d}(\bm{x})$, we conclude
\begin{equation*}
\begin{split}
[A_{d}(\bm{x})]^{\sharp_{d}} = & \sum_{j=1}^{N+1} \bigg(-\omega^{\frac{1}{2}-j} \exp\bigg(-\frac{1}{N+1} \sum_{k=1}^{N} \omega^{k(j-\frac{1}{2})} x_{k}\bigg) \bigg)^{d} \\
& = \sigma_{1}(z_{1}^{d}, \ldots, z_{N+1}^{d}) = \sigma_{1}(\bm{z}^{d})
\end{split}
\end{equation*}
with the penultimate equality following from the change of variables (\ref{eqn: CR change of variables}). 
\end{proof}


\bibliographystyle{amsplain}
\bibliography{localbib}

\end{document}

%% file: notation-table.tex

\begingroup
\footnotesize
\setlength{\intextsep}{4pt}
\setlength{\abovecaptionskip}{2pt}
\setlength{\belowcaptionskip}{2pt}
\setlength{\tabcolsep}{3pt}
\renewcommand{\arraystretch}{1.00}

\begin{table}[H]
\centering
\caption{Guide to notation.}
\label{tab:notation}
\begin{tabular}{@{}>{\raggedright\arraybackslash}p{.26\textwidth}
                    >{\raggedright\arraybackslash}p{.485\textwidth}
                    >{\raggedright\arraybackslash}p{.20\textwidth}@{}}
\hline
\textbf{Symbol} & \textbf{Meaning} & \textbf{Location} \\
\hline
\multicolumn{3}{@{}l@{}}{\textbf{Geometry and numerical groups}} \\
\hline
$\XX$, $X$, $\pi:Y\to X$
& The local orbifold CY3, its coarse space, and the crepant resolution.
& Def.~\ref{defn: new local orbifold CY3 defn}; \S\ref{subsection: Derived McKay and Ktheory}. \\
$\BB$, $B$, $D_i$, $C_i$
& The orbifold locus, its coarse curve, an exceptional divisor, and its generic fiber.
& Def.~\ref{defn: new local orbifold CY3 defn}; Lem.~\ref{lemma: Euler pairing lemma}. \\
\mbox{$N(Y),\ N_{\leq1}(Y),\ N_0(Y)$}
& Compactly supported numerical $K$-theory of $Y$ and its subgroups in dimensions at most one and zero.
& \S\ref{subsection: Derived McKay and Ktheory}. \\
\mbox{$N(\XX),\ N_{\leq1}(\XX),\ N_0(\XX)$}
& The corresponding numerical groups for the orbifold $\XX$.
& \S\ref{subsection: preliminaries on orbifolds and PT theory}. \\
$\FM:N(Y)\xrightarrow{\sim}N(\XX)$
& The numerical Fourier--Mukai isomorphism induced by derived McKay.
& Eq.~\eqref{eqn: FM on num K-theory}. \\
$N_{\exc}(Y)\cong N_0(\XX)\cong\ZZ\oplus\Lambda_{A_N}$
& Exceptional classes on $Y$ identified with orbifold point classes; the second summand is the root lattice.
& Lem.~\ref{lemma: FM with N_exc and N_0}. \\
$N_1(Y)^{\natural}\cong N_1(\XX)$
& Non-exceptional curve classes, identified by $\FM$.
& Defs.~\ref{defn: non-exceptional curve class defn} and~\ref{defn: definition of N_1 on the orbifold}. \\
$N_{\leq1}(Y)$, $N_{\leq1}(\XX)$
& Integral classes inside the ambient groups with weight-lattice component $\LambdaWeightAN$.
& Lems.~\ref{lemma: Ktheory on resolution} and~\ref{lemma: basic K-theory lemma}. \\
$H_2(Y)^{\natural}$
& The cycle-class image of $N_1(Y)^{\natural}$; equivalently, the subspace orthogonal to the $D_i$.
& Def.~\ref{defn: H2(Y)perp}. \\
\hline
\multicolumn{3}{@{}l@{}}{\textbf{Root data and variables}} \\
\hline
$\Lambda_{A_N}\subset\LambdaWeightAN$, $R^+$, $W$
& The root and weight lattices, positive roots, and Weyl group of type $A_N$.
& Def.~\ref{defn: defn of AN root lattice}; \S\ref{sec: root theory section}. \\
$s_1,\ldots,s_N$
& The simple reflections generating $W$, acting on the variables $\bm w$.
& Eq.~\eqref{eqn: simple root reflections}. \\
$\alpha_i$, $\omega_i$
& The simple roots and fundamental weights of the $A_N$ root system.
& Eq.~\eqref{eqn: AN simple roots}; \S\ref{sec: root theory section}. \\
$v=\sum_i v_i\alpha_i$, $\bm w^v=\prod_iw_i^{v_i}$
& Simple-root coordinates of a weight and the corresponding group-ring monomial.
& Def.~\ref{defn: group ring definition}. \\
$\Phi_i(\bm w)$
& The orbit sum of $\omega_i$; these generate $\ZZ[\LambdaWeightAN]^W$.
& Thm.~\ref{thm: crucial Fulton-Harris thm on Weyl invariants}. \\
$\bm z=(z_1,\ldots,z_{N+1})$, $\sigma_i(\bm z)$
& Variables with $\prod_kz_k=1$ and $w_i=z_i z_{i+1}^{-1}$; after substitution, $\Phi_i(\bm w)=\sigma_i(\bm z)$.
& Lem.~\ref{lemma: AN change of vars Weyl-invariance}. \\
$Q^\beta$, $y^n$, $\bm w^v$
& Variables recording the non-exceptional curve class, Euler characteristic, and weight component.
& Eq.~\eqref{eqn: PT series defn}. \\
$\lambda$, $\bm x=(x_1,\ldots,x_N)$
& The genus parameter and twisted-sector variables in orbifold GW theory.
& Def.~\ref{defn: full orbi GW potential}. \\
$\omega$, $\zeta$
& Roots of unity used in the crepant-resolution substitution and the total-GV specialization.
& Eq.~\eqref{eqn: CR change of variables}; Cor.~\ref{cor: unrefined PT/GV specialization 2}. \\
\hline
\multicolumn{3}{@{}l@{}}{\textbf{Enumerative invariants and series}} \\
\hline
$\PT_{\beta,n,v}(\XX)$, $\ZZZ_{\XX}^{\PT}$; $\PT_{\beta+v,n}(Y)$, $\ZZZ_Y^{\PT}$
& PT invariants and partition functions of the orbifold and its crepant resolution.
& Eqs.~\eqref{eqn: PT partition function on local orbifold 2} and~\eqref{eqn: PT series defn}. \\
$M(x,y)$, $\ZZZ_{\exc}$
& The weighted MacMahon function and the PT series of purely exceptional classes.
& Lem.~\ref{lem: formula for Zexc}. \\
$\ZZZ_Y^{\BS}$, $\ZZZ_\beta^{\BS}$
& The Bryan--Steinberg series and its $Q^\beta$-coefficient.
& Eq.~\eqref{eqn: defn of BS partition function}. \\
$\ZZZ_\delta^{\BM,D}$, $g_\delta^D$
& The Buelles--Moreira series for a ruled divisor $D$ and its rational form.
& Proof of Thm.~\ref{thm: Main Weyl invariance theorem}, part (B). \\
$\ZZZ_\beta^{\PT}$, $f_\beta(y,\bm w)$
& The $Q^\beta$-coefficient of the orbifold PT series on $\XX$ and its Weyl-invariant rational form.
& Thm.~\ref{thm: Main Weyl invariance theorem}. \\
$n_g^Y(\gamma)$, $\Psi_d$, $\psi_x$
& Ordinary GV invariants, the Adams substitution, and the factor $\psi_x=(x^{1/2}+x^{-1/2})^2$.
& Eq.~\eqref{eqn: Ordinary GV/PT formula}; Def.~\ref{defn: Main GVPT Defn}. \\
$n_g^{\XX}(\beta;\bm m)$, $N_{\beta,g}(\bm w)$
& The graded orbifold GV invariants and their Weyl-invariant generating polynomial.
& Def.~\ref{defn: Main GVPT Defn}. \\
$n_g^{\XX}(\beta)$
& The sum of the graded invariants over $\bm m$.
& Eq.~\eqref{eqn: total GV invariants}. \\
$\langle\cdots\rangle_{g,\beta}^{\XX}$, $\F_{\XX}^{\GW}$; $\F_Y^{\GW}$, $\F_{Y,\nexc}^{\GW}$
& Localized orbifold GW invariants and the full orbifold, resolution, and non-exceptional resolution potentials.
& Defs.~\ref{defn: full orbi GW potential} and~\ref{defn: GW potentials F and Fn-exc}. \\
$N_{\beta,g}^{\GW}(\bm z)$
& The graded GV polynomial in the $\bm z$ variables used in the orbifold GW/GV formula.
& Thm.~\ref{thm: Orb GV Multiple Cover}. \\
\hline
\end{tabular}
\end{table}

\endgroup